\newtheorem{thm}{Theorem}[section]
\newtheorem{cor}[thm]{Corollary}
\newtheorem{prop}[thm]{Proposition}
\numberwithin{equation}{section}
\begin{document}

\title{\bf Affine connections of non integrable distributions}
\author{Yong Wang}

\thanks{{\scriptsize
\hskip -0.4 true cm \textit{2010 Mathematics Subject Classification:}
53C40; 53C42.
\newline \textit{Key words and phrases:} Non integrable distributions; semi-symmetric metric connections; semi-symmetric non-metric connections; statistical connections; Chen's inequalities,  Einstein distributions; distributions with constant scalar curvature}}

\maketitle

\begin{abstract}
 In this paper, we study non integrable distributions in a Riemannian manifold with a semi-symmetric metric connection, a semi-symmetric non-metric connection and a statistical connection. We
obtain the Gauss, Codazzi, and Ricci equations for non integrable distributions with respect to the semi-symmetric metric connection, the semi-symmetric non-metric connection and the statistical connection. As applications, we obtain Chen's inequalities for non integrable distributions of
real space forms endowed with a semi-symmetric metric connection and a semi-symmetric non-metric connection. We give some examples of non integrable distributions in a Riemannian manifold with affine connections.
We find some new examples of Einstein distributions and distributions with constant scalar curvature.
\end{abstract}

\vskip 0.2 true cm


\pagestyle{myheadings}
\markboth{\rightline {\scriptsize Wang}}
         {\leftline{\scriptsize Affine connections of non integrable distributions}}

\bigskip
\bigskip


\section{ Introduction}

H. A. Hayden introduced the notion of a semi-symmetric metric connection on a
Riemannian manifold \cite{HA}. K. Yano studied a Riemannian manifold endowed with
a semi-symmetric metric connection \cite{Ya}. Some properties of a Riemannian manifold
and a hypersurface of a Riemannian manifold with a semi-symmetric metric
connection were studied by T. Imai \cite{I1,I2}. Z. Nakao \cite{NA} studied submanifolds of
a Riemannian manifold with semi-symmetric metric connections. N. S. Agashe and
M. R. Chafle introduced the notion of a semisymmetric non-metric connection and
studied some of its properties and submanifolds of a Riemannian manifold with a
semi-symmetric non-metric connection \cite{AC1,AC2}. In \cite{Vo}, Vos studied submanifolds of statistical manifolds and got the
the Gauss, Codazzi, and Ricci equations for statistical submanifolds.
In \cite{Mu}, the author considered non integrable distributions in a Riemannian manifold. The second fundamental form was defined and the Gauss equation for non integrable distributions
was established. In this paper, We establish Gauss formulas and Weingarten formulas and
obtain the Gauss, Codazzi, and Ricci equations for non integrable distributions with respect to a semi-symmetric metric connection, a semi-symmetric non-metric connection and a statistical connection.\\

On the other hand, one of the basic problems in submanifold theory is to find
simple relationships between the extrinsic and intrinsic invariants of a submanifold.
B. Y. Chen \cite{BYC0,BYC1,BYC2} established inequalities in this respect, called Chen inequalities.
Afterwards, many geometers studied similar problems for different submanifolds in
various ambient spaces.
 In \cite{MO1,OM}, Mihai and $\ddot{{\rm O}}$zg$\ddot{{\rm u}}$r studied Chen inequalities for submanifolds of real
space forms with a semi-symmetric metric connection and a semi-symmetric non-metric connection, respectively.
In this paper, we obtain Chen's inequalities for non integrable distributions of
real space forms endowed with a semi-symmetric metric connection and a semi-symmetric non-metric connection.\\
\indent It is a interesting problem to find Einstein manifolds and manifolds with constant scalar curvature. In \cite{DU}, using warped product spaces, Dobarro and Unal found
 Einstein manifolds and manifolds with constant scalar curvature. In \cite{SO,Wa1,Wa2}, authors found
Einstein manifolds and manifolds with constant scalar curvature with a semi-symmetric metric connection and a semi-symmetric non-metric connection.
In this paper, we define Einstein distributions and distributions with constant scalar curvature. We define warped product distributions on $\mathbb{R}\times \mathbb{S}^3$ and $\mathbb{R}\times H_3$ where $\mathbb{S}^3$ and $H_3$ are
the $3$-dimensional sphere and the $3$-dimensional Heisenberg group respectively. We find some new examples of Einstein distributions and distributions with constant scalar curvature.\\
\indent In Section 2, we establish the Gauss formula and the Weingarten formula and
obtain the Gauss, Codazzi, and Ricci equations for non integrable distributions with respect to a semi-symmetric metric connection. In this case, the Chen inequality is proved.
In Section 3, we establish the Gauss formula and the Weingarten formula and
obtain the Gauss, Codazzi, and Ricci equations for non integrable distributions with respect to a semi-symmetric non-metric connection. We also prove the Chen inequality in this case.
In Section 4, we establish the Gauss formula and the Weingarten formula and
obtain the Gauss, Codazzi, and Ricci equations for non integrable distributions with respect to a statistical connection.
In Section 5, We give some examples of non integrable distributions in a Riemannian manifold with affine connections.
We find some new examples of Einstein distributions and distributions with constant scalar curvature.


\vskip 1 true cm

\section{ Non integrable distributions with a semi-symmetric metric connection}

Let $(M,g)$ be a smooth Riemannian manifold, ${\rm dim}M=m$, and $\nabla$ be the Levi-Civita connection associated to the Riemannian metric $g$. We denote $\Gamma(M)$ the $C^{\infty}(M)$-module of vector
fields on $M$ and by $\nabla_XY$ the covariant derivative of $Y$ with respect to $X$ if $X,Y\in \Gamma(M)$. Let $D\subseteq TM$ be a non integrable distribution; that is, a subbundle of the tangent bundle $TM$ with constant
rank $n$
and there exist $X,Y \in\Gamma(D)$ such that $[X,Y]$ is not in $\Gamma(D)$ where $\Gamma(D)$ is the space of sections of $D$. The distribution $D$ inherits a metric tensor field $g^D$ from the original $g$ in $M$. Let $D^\bot\subseteq
TM$ is the orthogonal distribution to $D$ which inherits a metric tensor field $g^{D^\bot}$ from the $g$ and then $g=g^D\oplus g^{D^\bot}$. Let $\pi^D:TM\rightarrow D$, $\pi^{D^\bot}:TM\rightarrow D^\bot$ be the projections.
For $X,Y \in\Gamma(D)$, we define $\nabla^D_XY=\pi^D(\nabla_XY)$ and $[X,Y]^D=\pi^D([X,Y])$ and $[X,Y]^{D^\bot}=\pi^{D^\bot}([X,Y])$. By \cite{Mu}, we have for $X,Y \in\Gamma(D)$ and $f\in C^{\infty}(M)$
\begin{equation}
\nabla^D_{fX}Y=f\nabla^D_{X}Y,~~\nabla^D_{X}(fY)=X(f)Y+f\nabla^D_{X}Y,
\end{equation}
\begin{equation}
\nabla^D_Xg^D=0,~~~~T(X,Y):=\nabla^D_{X}Y-\nabla^D_{Y}X-[X,Y]=-[X,Y]^{D^\bot},
\end{equation}
and
\begin{equation}
\nabla_XY=\nabla^D_{X}Y+B(X,Y),~~B(X,Y)=\pi^{D^\bot}\nabla_XY.
\end{equation}
We note that $B(X,Y)\neq B(Y,X)$.\\
\indent Let $U\in \Gamma(TM)$ be a vector field and $\omega$ be a $1$-form defined by $\omega(V)=g(U,V)$ for any $V\in \Gamma(TM)$. We define the semi-symmetric metric connection on $M$
\begin{equation}
\widetilde{\nabla}_XY=\nabla_{X}Y+\omega(Y)X-g(X,Y)U.
\end{equation}
Let $U^D=\pi^DU$ and $U^{D^\bot}=\pi^{D^\bot}U$, then $U=U^D+U^{D^\bot}$. Let
\begin{equation}
\widetilde{\nabla}_XY=\widetilde{\nabla}^D_{X}Y+\widetilde{B}(X,Y),~~\widetilde{\nabla}^D_{X}Y=\pi^D\widetilde{\nabla}_{X}Y,~~\widetilde{B}(X,Y)=\pi^{D^\bot}\widetilde{\nabla}_XY.
\end{equation}
We call the $\widetilde{B}(X,Y)$ as the second fundamental form with respect to the semi-symmetric metric connection. By (2.3)-(2.5), we have
\begin{equation}
\widetilde{\nabla}^D_{X}Y={\nabla}^D_{X}Y+\omega(Y)X-g(X,Y)U^D,~~\widetilde{B}(X,Y)={B}(X,Y)-g(X,Y)U^{D^\bot}.
\end{equation}
By (2.2) and (2.6), we have

\begin{equation}
\widetilde{\nabla}^D_{X}(g^D)=0,~~\widetilde{T}^D(X,Y)=-[X,Y]^{D^\bot}+\omega(Y)X-\omega(X)Y.
\end{equation}
Similarly to the case $D=TM$, we have
\begin{thm}
There exists a unique linear connection $\widetilde{\nabla}^D: \Gamma(D)\times \Gamma(D)\rightarrow \Gamma(D)$ on $D$, which satisfies the property (2.7).
\end{thm}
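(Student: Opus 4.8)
The plan is to imitate the proof of the fundamental theorem of (pseudo-)Riemannian geometry, with two modifications forced by the present setting: the zero-torsion condition is replaced by the prescribed torsion in (2.7), and every Lie bracket must be projected to $D$ since $[X,Y]$ need not lie in $\Gamma(D)$. First I would record that the torsion condition in (2.7) splits into two independent pieces. Writing $\tau(X,Y):=\omega(Y)X-\omega(X)Y\in\Gamma(D)$ and noting that $\widetilde{\nabla}^D_XY,\widetilde{\nabla}^D_YX,\tau(X,Y)\in\Gamma(D)$, the $D^\bot$-component of $\widetilde{T}^D(X,Y)=\widetilde{\nabla}^D_XY-\widetilde{\nabla}^D_YX-[X,Y]$ is $-[X,Y]^{D^\bot}$ automatically, so it imposes no constraint on the connection; the only genuine requirement is the $D$-component
\[
\widetilde{\nabla}^D_XY-\widetilde{\nabla}^D_YX-[X,Y]^D=\tau(X,Y).
\]

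\emph{Uniqueness.} Assuming such a connection exists, I would write the metric-compatibility identity $\widetilde{\nabla}^D_X(g^D)=0$ as
\[
X\,g^D(Y,Z)=g^D(\widetilde{\nabla}^D_XY,Z)+g^D(Y,\widetilde{\nabla}^D_XZ)
\]
for $X,Y,Z\in\Gamma(D)$, form the cyclic combination (this one) $+$ (the same with $X,Y,Z\to Y,Z,X$) $-$ (the same with $X,Y,Z\to Z,X,Y$), and eliminate the antisymmetric combinations $\widetilde{\nabla}^D_XZ-\widetilde{\nabla}^D_ZX$, etc., by means of the $D$-torsion identity above. This yields a Koszul-type formula
\begin{multline*}
2g^D(\widetilde{\nabla}^D_XY,Z)=X\,g^D(Y,Z)+Y\,g^D(Z,X)-Z\,g^D(X,Y)\\
+g^D([X,Y]^D,Z)-g^D([X,Z]^D,Y)-g^D([Y,Z]^D,X)\\
+g^D(\tau(X,Y),Z)-g^D(\tau(X,Z),Y)-g^D(\tau(Y,Z),X).
\end{multline*}
Since $g^D$ is nondegenerate on $D$ and the right-hand side is expressed entirely in known data, this determines $\widetilde{\nabla}^D_XY\in\Gamma(D)$ uniquely.

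\emph{Existence.} The quickest route is to take (2.6) as the definition, namely $\widetilde{\nabla}^D_XY:=\nabla^D_XY+\omega(Y)X-g(X,Y)U^D$, where $\nabla^D$ is the connection on $D$ of \cite{Mu} satisfying (2.1)--(2.2). One checks directly from (2.1) that $\widetilde{\nabla}^D_{fX}Y=f\widetilde{\nabla}^D_XY$ and $\widetilde{\nabla}^D_X(fY)=X(f)Y+f\widetilde{\nabla}^D_XY$, so $\widetilde{\nabla}^D$ is a connection on $D$. Metric compatibility follows from $\nabla^D_X(g^D)=0$ together with the identities $g^D(U^D,Z)=\omega(Z)$ for $Z\in\Gamma(D)$, which make the two extra terms cancel; the torsion identity in (2.7) is exactly the computation already carried out between (2.6) and (2.7). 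Alternatively, one defines $\widetilde{\nabla}^D_XY$ by the Koszul formula above and verifies the connection axioms and (2.7) from it.

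The step requiring the most care is the bookkeeping caused by non-integrability: one must consistently replace the full brackets of the classical Koszul computation by their $D$-projections and keep track of the fact that only the $D$-component of the torsion condition is active, the $D^\bot$-component $-[X,Y]^{D^\bot}$ being an automatic consequence of the definition of $\widetilde{T}^D$. Once this splitting is in place, all remaining steps are the routine tensoriality and Leibniz verifications, and the nondegeneracy of $g^D$ supplies uniqueness exactly as in the integrable case.
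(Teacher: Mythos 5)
Your proposal is correct and follows essentially the route the paper intends: the paper gives no written proof beyond ``Similarly to the case $D=TM$,'' meaning exactly the classical argument you reproduce --- existence from the explicit formula (2.6) (whose compatibility and torsion property (2.7) the paper has already verified just before the theorem), and uniqueness via a Koszul-type formula adapted by projecting brackets to $D$. Your observation that the $D^\bot$-component of $\widetilde{T}^D$ is automatic and only the $D$-component constrains the connection is precisely the bookkeeping needed to make that adaptation rigorous, and your Koszul identity is correct.
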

Let $\{E_1,\cdots,E_n\}$ be the orthonormal basis on $D$. We define the mean curvature vector associated to $\widetilde{\nabla}$ on $D$ by $\widetilde{H}=\frac{1}{n}\sum_{i=1}^n\widetilde{B}(E_i,E_i)\in \Gamma(D^\bot).$
By (2.6), then $\widetilde{H}=H-U^{D^\bot},$ where ${H}=\frac{1}{n}\sum_{i=1}^n{B}(E_i,E_i).$ If $\widetilde{H}=0$, we say that $D$ is minimal with respect to the semi-symmetric metric connection $\widetilde{\nabla}$.
We say that $D$ is totally geodesic with respect to the semi-symmetric metric connection $\widetilde{\nabla}$ if $\widetilde{B}(X,Y)+\widetilde{B}(Y,X)=0$
\begin{prop}
1) If $D$ is totally geodesic with respect to the connection ${\nabla}$, then $D$ is totally geodesic with respect to the semi-symmetric metric connection $\widetilde{\nabla}$ if and only if $U\in\Gamma(D).$\\
2) $H=\widetilde{H}$ if and only if $U\in\Gamma(D).$
\end{prop}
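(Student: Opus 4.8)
The plan is to derive both statements directly from the second-fundamental-form decomposition (2.6), namely $\widetilde{B}(X,Y)=B(X,Y)-g(X,Y)U^{D^\bot}$, together with the mean-curvature identity $\widetilde{H}=H-U^{D^\bot}$ recorded just above the statement. The only genuinely analytic input I expect to need is the positive-definiteness of the induced metric $g^D$ on $D$, which lets me pass from a pointwise condition phrased through $g$ to the vanishing of $U^{D^\bot}$ itself.

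For part 1), I would first symmetrize (2.6): for $X,Y\in\Gamma(D)$,
\[
\widetilde{B}(X,Y)+\widetilde{B}(Y,X)=B(X,Y)+B(Y,X)-2g(X,Y)U^{D^\bot}.
\]
Invoking the hypothesis that $D$ is totally geodesic with respect to $\nabla$, i.e.\ $B(X,Y)+B(Y,X)=0$, the right-hand side collapses to $-2g(X,Y)U^{D^\bot}$. Hence $D$ is totally geodesic with respect to $\widetilde{\nabla}$ if and only if $g(X,Y)U^{D^\bot}=0$ for all $X,Y\in\Gamma(D)$. The implication $(\Leftarrow)$ is immediate, since $U\in\Gamma(D)$ forces $U^{D^\bot}=0$. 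For $(\Rightarrow)$, because $D$ has constant rank $n\geq 1$ I may pick a local unit section $X\in\Gamma(D)$ and take $Y=X$; then $g(X,X)=1$ forces $U^{D^\bot}=0$, that is, $U\in\Gamma(D)$.

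For part 2), I would simply read off the conclusion from $\widetilde{H}=H-U^{D^\bot}$: the equality $H=\widetilde{H}$ holds if and only if $U^{D^\bot}=0$, which is precisely $U\in\Gamma(D)$.

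I do not anticipate a real obstacle here, as the content is entirely formal once (2.6) and the formula for $\widetilde{H}$ are in hand. The one point deserving care is the logical structure of part 1): the totally-geodesic hypothesis for $\nabla$ is essential, since without it the symmetrized expression retains the term $B(X,Y)+B(Y,X)$ and the clean equivalence with $U\in\Gamma(D)$ breaks down. I would also be explicit in invoking the positive-definiteness of $g^D$ at the single step where I conclude $U^{D^\bot}=0$ from $g(X,X)U^{D^\bot}=0$.
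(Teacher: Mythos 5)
Your proof is correct and follows essentially the same route the paper takes: the paper states this proposition without a written proof, treating it as an immediate consequence of (2.6), $\widetilde{B}(X,Y)=B(X,Y)-g(X,Y)U^{D^\bot}$, and the relation $\widetilde{H}=H-U^{D^\bot}$, which is exactly what you use. Your symmetrization step and the explicit appeal to positive-definiteness of $g^D$ (evaluating at a unit vector $X=Y$) simply spell out the details the paper leaves implicit.
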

Let $h(X,Y)=\frac{1}{2}[B(X,Y)+B(Y,X)]$ and $\widetilde{h}(X,Y)=\frac{1}{2}[\widetilde{B}(X,Y)+\widetilde{B}(Y,X)].$ If $h=Hg^D$ (resp. $\widetilde{h}=\widetilde{H}g^D$), we say that $D$ is umbilical with respect to
$\nabla$ (resp. $\widetilde{\nabla}$). By (2.6), we have
\begin{prop}
$D$ is umbilical with respect to
$\nabla$ if and only if  $D$ is umbilical with respect to
$\widetilde{\nabla}$.
\end{prop}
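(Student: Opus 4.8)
The plan is to reduce the umbilical condition for $\widetilde{\nabla}$ directly to the one for $\nabla$ by using the decomposition (2.6). First I would symmetrize the second identity in (2.6). Because $g$ is symmetric, $g(X,Y)=g(Y,X)$, and therefore
\[
\widetilde{h}(X,Y)=\tfrac{1}{2}\big[\widetilde{B}(X,Y)+\widetilde{B}(Y,X)\big]=h(X,Y)-g(X,Y)U^{D^\bot}.
\]
This single identity drives the whole argument.

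Next I would recall from (2.6) that $\widetilde{H}=H-U^{D^\bot}$, together with $g^D(X,Y)=g(X,Y)$ for $X,Y\in\Gamma(D)$. Then for all $X,Y\in\Gamma(D)$ the condition $\widetilde{h}=\widetilde{H}g^D$ reads
\[
h(X,Y)-g(X,Y)U^{D^\bot}=\big(H-U^{D^\bot}\big)g^D(X,Y)=Hg^D(X,Y)-g(X,Y)U^{D^\bot}.
\]
Here the correction term $-g(X,Y)U^{D^\bot}$ occurs identically on both sides and cancels, leaving precisely $h=Hg^D$, which is the umbilical condition with respect to $\nabla$. Reading the equivalence in both directions yields the proposition.

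The main point to note is that there is no genuine obstacle: the content is simply that the extra term created in passing from $B$ to $\widetilde{B}$ is matched exactly by the extra term created in passing from $H$ to $\widetilde{H}$, so the two umbilical conditions are literally the same equation on $D$. The only step requiring a little care is checking that it is the normal part $U^{D^\bot}$, and not the full vector $U$, that enters both $\widetilde{h}$ and $\widetilde{H}$; this is automatic because $\widetilde{B}$, and hence $\widetilde{h}$, takes values in $\Gamma(D^\bot)$, and it is already recorded in (2.6).
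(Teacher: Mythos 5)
Your proof is correct and takes the same route the paper intends: the paper justifies this proposition simply by citing (2.6), and your computation—symmetrizing (2.6) to get $\widetilde{h}(X,Y)=h(X,Y)-g(X,Y)U^{D^\bot}$, combining it with $\widetilde{H}=H-U^{D^\bot}$, and cancelling the common correction term—is precisely the argument left implicit there.
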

Let $\xi\in\Gamma(D^{\bot})$ and $X\in\Gamma(D)$, then by (2.4), we have
\begin{equation}
\widetilde{\nabla}_X\xi={\nabla}_X\xi+\omega(\xi)X.
\end{equation}
Let $A_\xi:\Gamma(D)\rightarrow \Gamma(D)$ be the shape operator with respect to $\nabla$ defined by
\begin{equation}
g^D(A_\xi X,Y):=g^{D^\bot}(B(X,Y),\xi).
\end{equation}
Let $\nabla_X\xi=\pi^D\nabla_X\xi+L^{\bot}_X\xi$, then
\begin{equation}
\pi^D\nabla_X\xi=-A_\xi X,~~\nabla_X\xi=-A_\xi X+L^{\bot}_X\xi,
\end{equation}
which we called the Weingarten formula with respect to $\nabla$ and $L^{\bot}_X\xi:\Gamma(D)\times \Gamma(D^{\bot})\rightarrow \Gamma(D^{\bot})$ is a metric connection on $D^{\bot}$ along $\Gamma(D)$.
Let $\widetilde{A}_{\xi}=(A_{\xi}-\omega(\xi))I$, then by (2.8) and (2.10), we have
\begin{equation}
\widetilde{\nabla}_X\xi=-\widetilde{A}_\xi X+L^{\bot}_X\xi,
\end{equation}
which we called the Weingarten formula with respect to $\widetilde{\nabla}$.\\
\indent Given $X_1,X_2,X_3\in\Gamma(TM)$, the curvature tensor $\widetilde{R}$ with respect to $\widetilde{\nabla}$ is defined by
\begin{equation}
\widetilde{R}(X_1,X_2)X_3:=\widetilde{\nabla}_{X_1}\widetilde{\nabla}_{X_2}X_3-\widetilde{\nabla}_{X_2}\widetilde{\nabla}_{X_1}X_3-\widetilde{\nabla}_{[X_1,X_2]}X_3.
\end{equation}
Given $X_1,X_2,X_3\in\Gamma(D)$, the curvature tensor $\widetilde{R}^D$ on $D$ with respect to $\widetilde{\nabla}^D$ is defined by
\begin{equation}
\widetilde{R}^D(X_1,X_2)X_3:=\widetilde{\nabla}^D_{X_1}\widetilde{\nabla}^D_{X_2}X_3-\widetilde{\nabla}^D_{X_2}\widetilde{\nabla}^D_{X_1}X_3-\widetilde{\nabla}^D_{[X_1,X_2]^D}X_3-\pi^D[[X_1,X_2]^{D^{\bot}},X_3].
\end{equation}
In (2.13), $\widetilde{R}^D$ is a tensor field by adding the extra term $-\pi^D[[X_1,X_2]^{D^{\bot}},X_3]$.
Given $X_1,X_2,X_3,X_4\in\Gamma(D)$, the Riemannian curvature tensor $\widetilde{R}$, $\widetilde{R}^D$ are defined by
\begin{equation}
\widetilde{R}(X_1,X_2,X_3,X_4)=g(\widetilde{R}(X_1,X_2)X_3,X_4),~~\widetilde{R}^D(X_1,X_2,X_3,X_4)=g(\widetilde{R}^D(X_1,X_2)X_3,X_4).
\end{equation}
\begin{thm}
Given $X,Y,Z,W\in\Gamma(D)$, we have
\begin{align}
\widetilde{R}(X,Y,Z,W)&=\widetilde{R}^D(X,Y,Z,W)-g(B(X,W),B(Y,Z))+g(B(Y,W),B(X,Z))\\
&+g(Y,Z)\omega(B(X,W))-g(X,Z)\omega(B(Y,W))+g(X,W)\omega(B(Y,Z))\notag\\
&-g(Y,W)\omega(B(X,Z))-g(Y,Z)g(X,W)\omega(U^{D^\bot})\notag\\
&+g(X,Z)g(Y,W)\omega(U^{D^\bot})+g(B(Z,W),[X,Y]).\notag
\end{align}
Here Equation (2.15) is called the Gauss equation for $D$ with respect to $\widetilde{\nabla}$.
\end{thm}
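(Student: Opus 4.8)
The plan is to start from the definition (2.12) of $\widetilde{R}(X,Y)Z$, substitute the Gauss formula (2.5) together with the Weingarten formula (2.11), and then extract only the $D$-component by pairing with $W\in\Gamma(D)$; since $g(\widetilde{R}(X,Y)Z,W)$ sees only $\pi^D\widetilde{R}(X,Y)Z$, every summand landing in $\Gamma(D^\bot)$ is discarded. First I would expand $\widetilde{\nabla}_X\widetilde{\nabla}_Y Z=\widetilde{\nabla}_X\big(\widetilde{\nabla}^D_Y Z+\widetilde{B}(Y,Z)\big)$: by (2.5) the first summand splits into $\widetilde{\nabla}^D_X\widetilde{\nabla}^D_Y Z$ plus a $D^\bot$-valued piece $\widetilde{B}(X,\widetilde{\nabla}^D_Y Z)$, while by (2.11) the second summand splits into $-\widetilde{A}_{\widetilde{B}(Y,Z)}X$ plus the $D^\bot$-valued piece $L^\bot_X\widetilde{B}(Y,Z)$. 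Taking the $D$-part therefore gives $\widetilde{\nabla}^D_X\widetilde{\nabla}^D_Y Z-\widetilde{A}_{\widetilde{B}(Y,Z)}X$, and symmetrically after interchanging $X$ and $Y$.

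The subtle step is the term $\widetilde{\nabla}_{[X,Y]}Z$, where non-integrability forces the splitting $[X,Y]=[X,Y]^D+[X,Y]^{D^\bot}$ with a genuine $D^\bot$-part. Writing $V:=[X,Y]^{D^\bot}\in\Gamma(D^\bot)$, the $D$-part of $\widetilde{\nabla}_{[X,Y]^D}Z$ is $\widetilde{\nabla}^D_{[X,Y]^D}Z$, while from (2.4) and $g(V,Z)=0$ one gets $\pi^D\widetilde{\nabla}_V Z=\pi^D\nabla_V Z$. Comparing with the definition (2.13) of $\widetilde{R}^D$, which already carries the correction $-\pi^D[V,Z]$, the combination left to absorb is $\pi^D[V,Z]-\pi^D\nabla_V Z$. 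Here I would invoke torsion-freeness of the Levi-Civita connection, $[V,Z]=\nabla_V Z-\nabla_Z V$, to reduce this to $-\pi^D\nabla_Z V$, and then the Weingarten formula (2.10) with $V\in\Gamma(D^\bot)$ gives $-\pi^D\nabla_Z V=A_V Z$. This produces the extra term $g(B(Z,W),[X,Y])$ of (2.15), via $g(A_V Z,W)=g(B(Z,W),V)=g(B(Z,W),[X,Y])$ since $B(Z,W)\in\Gamma(D^\bot)$. I expect this to be the main obstacle, as it is the only place where the non-integrability of $D$ and the torsion of $\widetilde{\nabla}$ genuinely interact; the remaining manipulations are algebraic.

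Assembling the pieces yields
\[
\pi^D\widetilde{R}(X,Y)Z=\widetilde{R}^D(X,Y)Z-\widetilde{A}_{\widetilde{B}(Y,Z)}X+\widetilde{A}_{\widetilde{B}(X,Z)}Y+A_{[X,Y]^{D^\bot}}Z,
\]
and it remains to pair with $W$ and rewrite the shape-operator terms. Using $\widetilde{A}_\xi=A_\xi-\omega(\xi)I$, the defining relation (2.9) in the form $g(A_\xi X,W)=g(B(X,W),\xi)$, and the decomposition (2.6) $\widetilde{B}(Y,Z)=B(Y,Z)-g(Y,Z)U^{D^\bot}$, the term $g(\widetilde{A}_{\widetilde{B}(Y,Z)}X,W)$ expands into $g(B(X,W),B(Y,Z))-g(Y,Z)\omega(B(X,W))-g(X,W)\omega(B(Y,Z))+g(X,W)g(Y,Z)\omega(U^{D^\bot})$, where I have used $\omega(B(X,W))=g(U,B(X,W))=g(U^{D^\bot},B(X,W))$ because $B(X,W)\in\Gamma(D^\bot)$. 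Performing the same expansion for the $X\leftrightarrow Y$ term, adding the curvature term $g(B(Z,W),[X,Y])$ from the previous paragraph, and collecting all contributions reproduces exactly the ten terms on the right-hand side of (2.15), completing the proof.
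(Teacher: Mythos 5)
Your proposal is correct and follows essentially the same route as the paper: expand $\widetilde{R}(X,Y)Z$ via the Gauss formula (2.5) and Weingarten formula (2.11), extract the extra contribution $A_{[X,Y]^{D^\bot}}Z$ coming from the non-integrability term $\widetilde{\nabla}_{[X,Y]^{D^\bot}}Z$, and convert the shape-operator terms using (2.9) and (2.6), exactly as the paper does to pass from its intermediate identity (2.21) to (2.15). The only cosmetic differences are that you reduce $\widetilde{\nabla}_{[X,Y]^{D^\bot}}Z$ to the Levi-Civita connection via (2.4) and its torsion-freeness where the paper instead uses the torsion identity (2.18) for $\widetilde{\nabla}$ together with (2.11), and that you discard the $D^\bot$-valued terms immediately, whereas the paper keeps the full decomposition (2.21) because it is reused for the Codazzi equation.
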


\begin{proof}
From Equations (2.5) and (2.11), we have for $X,Y,Z\in\Gamma(D)$
\begin{align}
\widetilde{\nabla}_X\widetilde{\nabla}_YZ&=\widetilde{\nabla}^D_X\widetilde{\nabla}^D_YZ+\widetilde{B}(X,\widetilde{\nabla}^D_YZ)\\
&-A_{\widetilde{B}(Y,Z)}X+\omega(\widetilde{B}(Y,Z))X+L^{\bot}_X(\widetilde{B}(Y,Z))\notag,
\end{align}
\begin{align}
\widetilde{\nabla}_Y\widetilde{\nabla}_XZ&=\widetilde{\nabla}^D_Y\widetilde{\nabla}^D_XZ+\widetilde{B}(Y,\widetilde{\nabla}^D_XZ)\\
&-A_{\widetilde{B}(X,Z)}Y+\omega(\widetilde{B}(X,Z))Y+L^{\bot}_Y(\widetilde{B}(X,Z))\notag,
\end{align}
By (2.11) and for $X_1,X_2\in\Gamma(TM)$,
\begin{align}
\widetilde{\nabla}_{X_1}X_2=\widetilde{\nabla}_{X_2}X_1+[X_1,X_2]+\omega(X_2)X_1-\omega(X_1)X_2,
\end{align}
we get
\begin{align}
\widetilde{\nabla}_{[X,Y]^{D^{\bot}}}Z=-A_{[X,Y]^{D^{\bot}}}Z+L^{\bot}_Z([X,Y]^{D^{\bot}})+\omega(Z){[X,Y]^{D^{\bot}}}+[{[X,Y]^{D^{\bot}}},Z].
\end{align}
By $\widetilde{\nabla}_{[X,Y]}Z=\widetilde{\nabla}_{[X,Y]^{D}}Z+\widetilde{\nabla}_{[X,Y]^{D^{\bot}}}Z$ and (2.19) and (2.5), we have
\begin{align}
\widetilde{\nabla}_{[X,Y]}Z&=\widetilde{\nabla}^D_{[X,Y]^{D}}Z+\widetilde{B}([X,Y]^{D},Z)
-A_{[X,Y]^{D^{\bot}}}Z\\
&+L^{\bot}_Z([X,Y]^{D^{\bot}})+\omega(Z){[X,Y]^{D^{\bot}}}+[{[X,Y]^{D^{\bot}}},Z].\notag
\end{align}
By (2.12),(2.13),(2.16),(2.17) and (2.20), we have
\begin{align}
\widetilde{R}(X,Y)Z=&\widetilde{R}^D(X,Y)Z-\pi^{D^{\bot}}[{[X,Y]^{D^{\bot}}},Z]+\widetilde{B}(X,\widetilde{\nabla}^D_YZ)\\
&-\widetilde{B}(Y,\widetilde{\nabla}^D_XZ)
-\widetilde{B}([X,Y]^{D},Z)-A_{\widetilde{B}(Y,Z)}X+A_{\widetilde{B}(X,Z)}Y\notag\\
&+L^{\bot}_X(\widetilde{B}(Y,Z))-L^{\bot}_Y(\widetilde{B}(X,Z))
+\omega(\widetilde{B}(Y,Z))X-\omega(\widetilde{B}(X,Z))Y\notag\\
&+A_{[X,Y]^{D^{\bot}}}Z
-L^{\bot}_Z([X,Y]^{D^{\bot}})-\omega(Z){[X,Y]^{D^{\bot}}}.\notag
\end{align}
By (2.9) and (2.21), we get (2.15).
\end{proof}

\begin{cor} If $U=0$, then $\omega=0$ and $\widetilde{\nabla}=\nabla$, and we have
\begin{align}
{R}(X,Y,Z,W)=&{R}^D(X,Y,Z,W)-g(B(X,W),B(Y,Z))\\
&+g(B(Y,W),B(X,Z))+g(B(Z,W),[X,Y]).\notag
\end{align}
\end{cor}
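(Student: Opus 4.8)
The plan is to derive the corollary as an immediate specialization of Theorem 2.4, so the argument is essentially bookkeeping once the hypotheses are unwound. First I would establish the two assertions preceding the displayed equation. Since the one-form $\omega$ is defined by $\omega(V)=g(U,V)$ for every $V\in\Gamma(TM)$, the hypothesis $U=0$ together with the linearity of $g$ in its first slot forces $\omega\equiv 0$. Feeding $U=0$ and $\omega=0$ into the defining relation (2.4) gives $\widetilde{\nabla}_XY=\nabla_XY+\omega(Y)X-g(X,Y)U=\nabla_XY$, that is, $\widetilde{\nabla}=\nabla$.

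Next I would record that every tilded object reduces to its Levi-Civita analogue. From $\widetilde{\nabla}=\nabla$ and the splitting (2.5) we get $\widetilde{\nabla}^D_XY=\pi^D\widetilde{\nabla}_XY=\pi^D\nabla_XY=\nabla^D_XY$, and hence, comparing the curvature definitions (2.12) and (2.13) with their Levi-Civita counterparts, $\widetilde{R}=R$ and $\widetilde{R}^D=R^D$. I would also note that $U^{D^\bot}=\pi^{D^\bot}U=\pi^{D^\bot}(0)=0$, which by (2.6) yields $\widetilde{B}=B$, although the right-hand side of (2.15) is already phrased in terms of the ordinary second fundamental form $B$.

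Finally I would substitute $\omega=0$ and $U^{D^\bot}=0$ directly into the Gauss equation (2.15). The four terms carrying a factor $\omega(B(\cdot,\cdot))$ and the two terms carrying $\omega(U^{D^\bot})$ all vanish identically, and replacing $\widetilde{R}$, $\widetilde{R}^D$ by $R$, $R^D$ leaves precisely
\[
R(X,Y,Z,W)=R^D(X,Y,Z,W)-g(B(X,W),B(Y,Z))+g(B(Y,W),B(X,Z))+g(B(Z,W),[X,Y]),
\]
which is (2.22).

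There is no genuine obstacle here: all of the analytic content lives in Theorem 2.4, and the corollary is a clean limiting case. The only point deserving a second glance is the verification that $U^{D^\bot}=0$, so that the final two $\omega(U^{D^\bot})$ terms drop out; but this is immediate from $U=0$ and the linearity of the projection $\pi^{D^\bot}$.
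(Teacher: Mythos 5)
Your proposal is correct and coincides with the paper's (implicit) argument: the corollary is stated as an immediate specialization of Theorem 2.4, obtained exactly as you do by observing that $U=0$ forces $\omega=0$, $U^{D^\bot}=0$, $\widetilde{\nabla}=\nabla$, $\widetilde{\nabla}^D=\nabla^D$, hence $\widetilde{R}=R$, $\widetilde{R}^D=R^D$, so that all $\omega$-terms in (2.15) vanish and (2.22) remains. Nothing is missing.
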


\begin{thm}
Given $X,Y,Z\in\Gamma(D)$, we have
\begin{align}
(\widetilde{R}(X,Y)Z)^{D^{\bot}}=&(L^{\bot}_X\widetilde{B})(Y,Z)-(L^{\bot}_Y\widetilde{B})(X,Z)\\
&-\omega(X)\widetilde{B}(Y,Z)+\omega(Y)\widetilde{B}(X,Z)-\pi^{D^{\bot}}[{[X,Y]^{D^{\bot}}},Z]\notag\\
&-L^{\bot}_Z([X,Y]^{D^{\bot}})-\omega(Z){[X,Y]^{D^{\bot}}},\notag
\end{align}
where $(L^{\bot}_X\widetilde{B})(Y,Z)=L^{\bot}_X(\widetilde{B}(Y,Z))-\widetilde{B}(\widetilde{\nabla}^D_XY,Z)-\widetilde{B}(Y,\widetilde{\nabla}^D_XZ).$ Equation (2.23) is called the Codazzi
equation with respect to $\widetilde{\nabla}$.
\end{thm}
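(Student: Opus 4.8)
The plan is to read the Codazzi equation off directly from the expansion of $\widetilde{R}(X,Y)Z$ already produced in the proof of Theorem 2.4, namely equation (2.21): one applies the normal projection $\pi^{D^\bot}$ to both sides and then rewrites the surviving terms in the covariant-derivative notation of (2.23). No new geometric input is required beyond (2.21) itself and the torsion relation (2.7).

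First I would classify the terms on the right-hand side of (2.21) according to whether they lie in $\Gamma(D)$ or $\Gamma(D^\bot)$. The curvature term $\widetilde{R}^D(X,Y)Z$ is tangent to $D$; the shape-operator terms $-A_{\widetilde{B}(Y,Z)}X$, $A_{\widetilde{B}(X,Z)}Y$ and $A_{[X,Y]^{D^\bot}}Z$ are tangent by the Weingarten formula (2.10); and $\omega(\widetilde{B}(Y,Z))X$ and $-\omega(\widetilde{B}(X,Z))Y$ are tangent because $X,Y\in\Gamma(D)$. All of these are annihilated by $\pi^{D^\bot}$. The remaining terms already lie in $\Gamma(D^\bot)$, so applying $\pi^{D^\bot}$ gives
\begin{align*}
(\widetilde{R}(X,Y)Z)^{D^\bot}=&\,\widetilde{B}(X,\widetilde{\nabla}^D_YZ)-\widetilde{B}(Y,\widetilde{\nabla}^D_XZ)-\widetilde{B}([X,Y]^{D},Z)+L^{\bot}_X(\widetilde{B}(Y,Z))\\
&-L^{\bot}_Y(\widetilde{B}(X,Z))-\pi^{D^\bot}[{[X,Y]^{D^\bot}},Z]-L^{\bot}_Z([X,Y]^{D^\bot})-\omega(Z)[X,Y]^{D^\bot}.
\end{align*}

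Finally I would match this against the stated right-hand side of (2.23). Expanding the definition $(L^{\bot}_X\widetilde{B})(Y,Z)=L^{\bot}_X(\widetilde{B}(Y,Z))-\widetilde{B}(\widetilde{\nabla}^D_XY,Z)-\widetilde{B}(Y,\widetilde{\nabla}^D_XZ)$ together with its $X\leftrightarrow Y$ counterpart, the $L^\bot$-terms, the $\pi^{D^\bot}$-term and the $\omega(Z)$-term agree termwise, and the whole claim collapses to the single identity
\begin{align*}
-\widetilde{B}([X,Y]^{D},Z)=\widetilde{B}(\widetilde{\nabla}^D_YX-\widetilde{\nabla}^D_XY,Z)-\omega(X)\widetilde{B}(Y,Z)+\omega(Y)\widetilde{B}(X,Z).
\end{align*}
This is where the only real (though still routine) step lies: from (2.6) one has $\widetilde{\nabla}^D_XY-\widetilde{\nabla}^D_YX=[X,Y]^{D}+\omega(Y)X-\omega(X)Y$, which is a restatement of the torsion relation (2.7). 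Substituting $\widetilde{\nabla}^D_YX-\widetilde{\nabla}^D_XY=-[X,Y]^{D}-\omega(Y)X+\omega(X)Y$ into the right-hand side and using that $\widetilde{B}$ is $C^\infty(M)$-bilinear on $\Gamma(D)\times\Gamma(D)$ (proved for $B$, hence for $\widetilde{B}$, from (2.3) and (2.6)), the two spurious $\omega$-terms cancel against $-\omega(X)\widetilde{B}(Y,Z)+\omega(Y)\widetilde{B}(X,Z)$, and the identity reduces to $-\widetilde{B}([X,Y]^{D},Z)=-\widetilde{B}([X,Y]^{D},Z)$. Thus (2.23) holds. I do not anticipate any genuine obstacle: the content is entirely contained in (2.21), and the main care needed is simply tracking tangential versus normal components and invoking the tensoriality of $\widetilde{B}$ so that it may be evaluated slotwise on the vector field $\widetilde{\nabla}^D_YX-\widetilde{\nabla}^D_XY$.
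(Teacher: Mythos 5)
Your proposal is correct and follows essentially the same route as the paper's own proof: project the expansion (2.21) onto $D^{\bot}$ to obtain the intermediate identity (the paper's (2.24)), then absorb $-\widetilde{B}([X,Y]^{D},Z)$ into the covariant derivatives $(L^{\bot}_X\widetilde{B})(Y,Z)-(L^{\bot}_Y\widetilde{B})(X,Z)$ via the torsion relation $[X,Y]^{D}=\widetilde{\nabla}^D_XY-\widetilde{\nabla}^D_YX-\omega(Y)X+\omega(X)Y$. Your write-up merely makes explicit two points the paper leaves implicit, namely the tangential/normal classification of the terms in (2.21) and the $C^{\infty}(M)$-bilinearity of $\widetilde{B}$, both of which are verified correctly.
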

\begin{proof}
From (2.21), we have
\begin{align}
(\widetilde{R}(X,Y)Z)^{D^{\bot}}=&-\pi^{D^{\bot}}[{[X,Y]^{D^{\bot}}},Z]+\widetilde{B}(X,\widetilde{\nabla}^D_YZ)\\
&-\widetilde{B}(Y,\widetilde{\nabla}^D_XZ)
-\widetilde{B}([X,Y]^{D},Z)
+L^{\bot}_X(\widetilde{B}(Y,Z))\notag\\
&-L^{\bot}_Y(\widetilde{B}(X,Z))
-L^{\bot}_Z([X,Y]^{D^{\bot}})-\omega(Z){[X,Y]^{D^{\bot}}}.\notag
\end{align}
By $[X,Y]^D=\widetilde{\nabla}^D_XY-\widetilde{\nabla}^D_YX-\omega(Y)X+\omega(X)Y$ and the definition of $(L^{\bot}_X\widetilde{B})(Y,Z)$ and (2.24), we get (2.23).
\end{proof}

\begin{cor} If $U=0$, then we have
\begin{align}
({R}(X,Y)Z)^{D^{\bot}}=&(L^{\bot}_X{B})(Y,Z)-(L^{\bot}_Y{B})(X,Z)\\
&-\pi^{D^{\bot}}[{[X,Y]^{D^{\bot}}},Z]
-L^{\bot}_Z([X,Y]^{D^{\bot}}).\notag
\end{align}
\end{cor}

\begin{thm}
Given $X,Y\in\Gamma(D)$, $\xi\in\Gamma(D^{\bot})$, we have
\begin{align}
(\widetilde{R}(X,Y)\xi)^{D^{\bot}}=-\widetilde{B}(X,\widetilde{A}_{\xi}Y)+\widetilde{B}(Y,\widetilde{A}_{\xi}X)+\widetilde{R}^{L^{\bot}}(X,Y)\xi
\end{align}
where
\begin{align}
\widetilde{R}^{L^{\bot}}(X,Y)\xi:=L^{\bot}_XL^{\bot}_Y\xi-L^{\bot}_YL^{\bot}_X\xi-L^{\bot}_{[X,Y]^D}\xi-\pi^{D^{\bot}}\widetilde{\nabla}_{[X,Y]^{\bot}}\xi.
\end{align}
 Equation (2.26) is called the Ricci
equation for $D$ with respect to $\widetilde{\nabla}$.
\end{thm}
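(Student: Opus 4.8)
The plan is to follow the same scheme used for the Gauss and Codazzi equations (Theorems 2.3 and 2.5): expand the curvature operator $\widetilde{R}(X,Y)\xi$ term by term through its defining equation (2.12), substitute the Gauss formula (2.5) and the Weingarten formula (2.11) at each differentiation, and finally apply $\pi^{D^{\bot}}$ to retain only the normal components.

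First I would compute the iterated derivative $\widetilde{\nabla}_X\widetilde{\nabla}_Y\xi$. Applying the Weingarten formula (2.11) once gives $\widetilde{\nabla}_Y\xi=-\widetilde{A}_\xi Y+L^{\bot}_Y\xi$. Now $-\widetilde{A}_\xi Y\in\Gamma(D)$, so differentiating it again requires the Gauss formula (2.5), producing a tangential piece $-\widetilde{\nabla}^D_X(\widetilde{A}_\xi Y)$ and a normal piece $-\widetilde{B}(X,\widetilde{A}_\xi Y)$; whereas $L^{\bot}_Y\xi\in\Gamma(D^{\bot})$, so differentiating it again requires the Weingarten formula (2.11), producing $-\widetilde{A}_{L^{\bot}_Y\xi}X+L^{\bot}_XL^{\bot}_Y\xi$. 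The analogous expansion for $\widetilde{\nabla}_Y\widetilde{\nabla}_X\xi$ is obtained by interchanging $X$ and $Y$. I would then handle the bracket term $\widetilde{\nabla}_{[X,Y]}\xi$ by splitting $[X,Y]=[X,Y]^{D}+[X,Y]^{D^{\bot}}$: on the tangential part the Weingarten formula (2.11) yields $-\widetilde{A}_\xi[X,Y]^{D}+L^{\bot}_{[X,Y]^{D}}\xi$, while the normal part contributes $\widetilde{\nabla}_{[X,Y]^{D^{\bot}}}\xi$, which I leave unexpanded.

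Finally, applying $\pi^{D^{\bot}}$ to $\widetilde{R}(X,Y)\xi=\widetilde{\nabla}_X\widetilde{\nabla}_Y\xi-\widetilde{\nabla}_Y\widetilde{\nabla}_X\xi-\widetilde{\nabla}_{[X,Y]}\xi$, every term valued in $\Gamma(D)$—that is, the $\widetilde{\nabla}^D(\widetilde{A}_\xi\,\cdot\,)$ terms and all the shape-operator terms $\widetilde{A}_{\bullet}$—is annihilated, and only the normal pieces survive. Collecting them produces $-\widetilde{B}(X,\widetilde{A}_\xi Y)+\widetilde{B}(Y,\widetilde{A}_\xi X)$ together with $L^{\bot}_XL^{\bot}_Y\xi-L^{\bot}_YL^{\bot}_X\xi-L^{\bot}_{[X,Y]^{D}}\xi-\pi^{D^{\bot}}\widetilde{\nabla}_{[X,Y]^{D^{\bot}}}\xi$, and the latter is exactly $\widetilde{R}^{L^{\bot}}(X,Y)\xi$ by definition (2.27), which establishes (2.26).

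The main obstacle is purely organizational: at each second differentiation one must correctly decide whether the Gauss or the Weingarten formula applies, according to whether the intermediate field lies in $\Gamma(D)$ or in $\Gamma(D^{\bot})$, and then carefully track which of the resulting terms are tangential and hence disappear under $\pi^{D^{\bot}}$. In particular, the two shape-operator terms $-\widetilde{A}_{L^{\bot}_Y\xi}X$ and $-\widetilde{A}_{L^{\bot}_X\xi}Y$ arising from differentiating the normal connection $L^{\bot}$, though nonzero in $\Gamma(D)$, drop out of the projection and therefore never appear in the final Ricci equation.
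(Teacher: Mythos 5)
Your proof is correct and is essentially identical to the paper's own argument: your three expansions of $\widetilde{\nabla}_X\widetilde{\nabla}_Y\xi$, $\widetilde{\nabla}_Y\widetilde{\nabla}_X\xi$, and $\widetilde{\nabla}_{[X,Y]}\xi$ via the Gauss formula (2.5) and the Weingarten formula (2.11) are exactly the paper's equations (2.28)--(2.30), and the concluding projection by $\pi^{D^{\bot}}$ is the paper's final step. Your closing remark about the shape-operator terms $\widetilde{A}_{L^{\bot}_Y\xi}X$ and $\widetilde{A}_{L^{\bot}_X\xi}Y$ being annihilated by the projection correctly accounts for why they leave no trace in (2.26).
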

\begin{proof}
From (2.5) and (2.11), we have
\begin{align}
\widetilde{\nabla}_X\widetilde{\nabla}_Y\xi=-\widetilde{\nabla}^D_X(\widetilde{A}_{\xi}Y)-\widetilde{B}(X,\widetilde{A}_{\xi}Y)-\widetilde{A}_{L^{\bot}_Y\xi}X+L^{\bot}_XL^{\bot}_Y\xi,
\end{align}
\begin{align}
\widetilde{\nabla}_Y\widetilde{\nabla}_X\xi=-\widetilde{\nabla}^D_Y(\widetilde{A}_{\xi}X)-\widetilde{B}(Y,\widetilde{A}_{\xi}X)-\widetilde{A}_{L^{\bot}_X\xi}Y+L^{\bot}_YL^{\bot}_X\xi,
\end{align}
\begin{align}
\widetilde{\nabla}_{[X,Y]}\xi=-\widetilde{A}_{\xi}([X,Y]^D)+L^{\bot}_{[X,Y]^D}\xi+\pi^D\widetilde{\nabla}_{[X,Y]^{D^{\bot}}}\xi+\pi^{D^{\bot}}\widetilde{\nabla}_{[X,Y]^{D^{\bot}}}\xi.
\end{align}
From (2.27)-(2.30), we get (2.26).
\end{proof}

\begin{cor} If $U=0$, then we have
\begin{align}
({R}(X,Y)\xi)^{D^{\bot}}=-{B}(X,{A}_{\xi}Y)+{B}(Y,{A}_{\xi}X)+{R}^{L^{\bot}}(X,Y)\xi.
\end{align}
\end{cor}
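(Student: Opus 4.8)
The plan is to derive the Ricci equation (2.26) by specializing the general curvature calculation to the case where the connection is the Levi-Civita connection $\nabla$, i.e. by setting $U=0$ throughout. Observe first that when $U=0$ we have $\omega=0$, hence $\widetilde{\nabla}=\nabla$, $\widetilde{B}=B$, and $\widetilde{A}_\xi=A_\xi$ (since $\widetilde{A}_\xi=(A_\xi-\omega(\xi))I$ collapses to $A_\xi$). So the entire derivation is structurally identical to that of Theorem~2.5; the only task is to verify that each intermediate formula reduces cleanly, leaving no residual $\omega$-terms.

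First I would write down the analogues of (2.27)--(2.30) with $\widetilde{\nabla}$ replaced by $\nabla$ and $\widetilde{A}_\xi$ by $A_\xi$. Concretely, using the Weingarten formula (2.10) in the form $\nabla_X\xi=-A_\xi X+L^\bot_X\xi$ together with the Gauss-type splitting (2.5) (now $\nabla_XY=\nabla^D_XY+B(X,Y)$), I expand $\nabla_X\nabla_Y\xi$ and $\nabla_Y\nabla_X\xi$ exactly as in (2.28) and (2.29), and expand $\nabla_{[X,Y]}\xi$ by decomposing $[X,Y]=[X,Y]^D+[X,Y]^{D^\bot}$ as in (2.30). Each of these is the $U=0$ restriction of the corresponding tilded identity, so no new computation beyond bookkeeping is required.

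The key step is then to assemble $R(X,Y)\xi=\nabla_X\nabla_Y\xi-\nabla_Y\nabla_X\xi-\nabla_{[X,Y]}\xi$ and project onto $D^\bot$. The tangential pieces $-\nabla^D_X(A_\xi Y)$, the shape-operator cross terms $-A_{L^\bot_Y\xi}X$, and the term $-A_\xi([X,Y]^D)$ all lie in $\Gamma(D)$ and therefore vanish under $\pi^{D^\bot}$; similarly $\pi^D\nabla_{[X,Y]^{D^\bot}}\xi$ drops out. What survives are precisely the normal-bundle curvature terms $L^\bot_XL^\bot_Y\xi-L^\bot_YL^\bot_X\xi-L^\bot_{[X,Y]^D}\xi-\pi^{D^\bot}\nabla_{[X,Y]^{D^\bot}}\xi$, which by definition (2.27, now without tildes) equal $R^{L^\bot}(X,Y)\xi$, together with the second-fundamental-form terms $-B(X,A_\xi Y)+B(Y,A_\xi X)$. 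This yields (2.31) directly.

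I expect the only genuine obstacle to be a careful check that the $\omega$-contributions in the tilded derivation really do all carry an explicit factor of $\omega$ (so that they vanish identically when $U=0$) rather than being hidden inside $\widetilde{A}_\xi$ or $L^\bot$; in particular one must confirm that $L^\bot_X\xi$ is unchanged in passing from $\widetilde{\nabla}$ to $\nabla$, which follows because (2.8) gives $\widetilde{\nabla}_X\xi=\nabla_X\xi+\omega(\xi)X$ and the correction $\omega(\xi)X$ is purely tangential, leaving the $D^\bot$-component $L^\bot_X\xi$ untouched. Once this is confirmed, the corollary is immediate as the specialization $U=0$ of Theorem~2.5, and no separate argument is needed.
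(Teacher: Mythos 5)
Your proposal is correct and takes essentially the same route as the paper: the corollary is nothing more than the specialization $U=0$ (hence $\omega=0$, $\widetilde{\nabla}=\nabla$, $\widetilde{B}=B$, $\widetilde{A}_{\xi}=A_{\xi}$, $\widetilde{R}^{L^{\bot}}=R^{L^{\bot}}$) of the Ricci equation of Theorem 2.8, and the paper gives no separate argument beyond this. Your verification that $L^{\bot}_X\xi$ is the same for both connections, since by (2.8) the correction $\omega(\xi)X$ is purely tangential, is precisely the point that makes the reduction immediate.
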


Nextly, we prove the Chen inequality with respect to $D$ and $\widetilde{\nabla}.$
For $X,Y\in\Gamma(TM)$, we let
\begin{equation*}
\alpha(X,Y)=({{\nabla}}_X\omega)(Y)-\omega(X)\omega(Y)+\frac{1}{2}g(X,Y)\omega(U).
\end{equation*}
From \cite{MO1}, we have
\begin{align}
\widetilde{R}(X,Y,Z,W)&={R^L}(X,Y,Z,W)+\alpha(X,Z)g(Y,W)-\alpha(Y,Z)g(X,W)\\
&~+g(X,Z)\alpha(Y,W)-g(Y,Z)\alpha(X,W).\notag
\end{align}
In $M$ we can choose a local orthonormal frame $E_1,\cdots,E_n,E_{n+1},\cdots,E_{m},$ such that, $E_1,\cdots,E_n$ are orthonormal frames of $D$. We write
$\lambda=\sum_{j=1}^n\alpha(E_j,E_j)$ and
 $h_{ij}^r=g(B(E_i,E_j),E_r)$ and
 $\widetilde{h}_{ij}^r=g(\widetilde{B}(E_i,E_j),E_r)$
 for $1\leq i,j\leq n$ and $n+1\leq r\leq m.$
The squared length of $B$ is $||B||^2=\sum_{i,j=1}^ng(B(E_i,E_j),B(E_i,E_j))$ and the squared length of $\widetilde{B}$ is $||\widetilde{B}||^2=\sum_{i,j=1}^ng(\widetilde{B}(E_i,E_j),\widetilde{B}(E_i,E_j)).$
Let $M$ be an $m$-dimensional real space form of constant sectional curvature $c$ endowed with a semi-symmetric connection $\widetilde{\nabla}$. The curvature tensor
$R^L$ with respect to the Levi-Civita connection on $M$ is expressed by
\begin{equation}
{R^L}(X,Y,Z,W)=c\{g(X,W)g(Y,Z)-g(X,Z)g(Y,W)\}.
\end{equation}
By (2.32) and (2.33), we get
\begin{align}
\widetilde{R}(X,Y,Z,W)&=c\{g(X,W)g(Y,Z)-g(X,Z)g(Y,W)\}+\alpha(X,Z)g(Y,W)\\
&-\alpha(Y,Z)g(X,W)+g(X,Z)\alpha(Y,W)-g(Y,Z)\alpha(X,W).\notag
\end{align}
\indent Let $\Pi\subset D$, be a $2$-plane section. Denote by $\widetilde{K}^D(\Pi)$ the sectional curvature of $D$ with the induced connection $\widetilde{\nabla}^D$ defined by
\begin{align}\widetilde{K}^D(\Pi)=\frac{1}{2}[\widetilde{R}^D(E_1,E_2,E_2,E_1)-\widetilde{R}^D(E_1,E_2,E_1,E_2)],
\end{align}
where $E_1,E_2$ are orthonormal basis of $\Pi$ and $\widetilde{K}^D(\Pi)$ is independent of the choice of $e_1,e_2$.
For any orthonormal basis $\{E_1,\cdots,E_n\}$ of
$D$, the scalar curvature $\widetilde{\tau}^D$ with respect to $D$ and $\widetilde{\nabla}^D$ is defined by
\begin{align}
\widetilde{\tau}^D=\frac{1}{2}\sum_{1\leq i,j\leq n}\widetilde{R}^D(E_i,E_j,E_j,E_i).
\end{align}
By Theorem 2.4, we have
\begin{align}
\widetilde{R}(X,Y,Z,W)&=\widetilde{R}^D(X,Y,Z,W)+g(\widetilde{B}(Y,W),\widetilde{B}(X,Z))\\
&-g(\widetilde{B}(X,W),\widetilde{B}(Y,Z))
+g(B(Z,W),[X,Y]).\notag
\end{align}
Let $E_1,E_2$ be the orthonormal basis of $\Pi\subset D$ and define
\begin{align}
A^D=&\frac{1}{2}\sum_{1\leq i,j\leq n}g(B(E_j,E_i),[E_j,E_i]),\\
\Omega^{\Pi}=&\alpha(E_1,E_1)+\alpha(E_2,E_2)-\frac{1}{2}g(B(E_1,E_2)-B(E_2,E_1),[E_1,E_2]).\notag
\end{align}
Then $A^D$ and $\Omega^{\Pi}$ are independent of the choice of the orthonormal basis.
For the distribution $D$ of the real space form $M$ endowed with a semi-symmetric metric connection, we establish the following inequality, which we called the Chen first inequality.\\

\begin{thm}
 Let $TM=D\oplus D^{\bot}$, ${\rm dim}D=n\geq 3$, and let $M$ be a manifold with constant sectional curvature $c$ endowed with a connection
$\widetilde{\nabla}$, then
\begin{align}
\widetilde{\tau}^D-\widetilde{K}^D(\Pi)&\leq \frac{(n+1)(n-2)}{2}c-(n-1)\lambda+A^D+\Omega^{\Pi}+\frac{n^2(n-2)}{2(n-1)}\|\widetilde{H}\|^2+\frac{1}{2}||\widetilde{B}||^2.
\end{align}
\end{thm}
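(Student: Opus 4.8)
The plan is to follow the classical derivation of Chen's first inequality, as in \cite{MO1}, adapting it to the distribution $D$ whose second fundamental form $\widetilde{B}$ is not symmetric. First I would insert the real-space-form value (2.35) of $\widetilde{R}$ into the Gauss equation (2.38): this expresses each sectional component $\widetilde{R}^D(E_i,E_j,E_j,E_i)$ in terms of $c$, the tensor $\alpha$, the components $\widetilde{h}_{ij}^{r}=g(\widetilde{B}(E_i,E_j),E_r)$, and the bracket term $g(B(E_j,E_i),[E_i,E_j])$.

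Next I would sum over $1\le i,j\le n$ to evaluate $2\widetilde{\tau}^D$. Using $g(E_i,E_j)=\delta_{ij}$, the $\alpha$-contributions collapse to $-2(n-1)\lambda$, the $c$-contributions to $n(n-1)c$, the bracket terms assemble into $2A^D$ by the antisymmetry of $[\cdot,\cdot]$, and the quadratic terms in $\widetilde{B}$ produce $n^2\|\widetilde{H}\|^2$ together with $-\sum_i\|\widetilde{B}(E_i,E_i)\|^2$ and the asymmetric cross terms $-\sum_{i\ne j}g(\widetilde{B}(E_j,E_i),\widetilde{B}(E_i,E_j))$. A parallel evaluation of $\widetilde{K}^D(\Pi)$ from (2.36), applying (2.38) and (2.35) to both orderings $(E_1,E_2,E_2,E_1)$ and $(E_1,E_2,E_1,E_2)$, gives
\begin{equation*}
\widetilde{K}^D(\Pi)=c-\Omega^{\Pi}+g(\widetilde{B}(E_1,E_1),\widetilde{B}(E_2,E_2))-g(\widetilde{B}(E_2,E_1),\widetilde{B}(E_1,E_2)),
\end{equation*}
where precisely the symmetrised bracket in the definition of $\Omega^{\Pi}$ emerges from the two orderings.

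Forming $\widetilde{\tau}^D-\widetilde{K}^D(\Pi)$, the terms $\tfrac{(n+1)(n-2)}{2}c$, $-(n-1)\lambda$, $A^D$ and $\Omega^{\Pi}$ separate off with exactly the coefficients appearing in (2.40), and the claim reduces to a pointwise estimate which I would split over each normal direction $E_r$. Writing $a_i:=\widetilde{h}_{ii}^{r}$, the diagonal core is Chen's algebraic lemma
\begin{equation*}
\Big(\sum_{i=1}^{n}a_i\Big)^2\le (n-1)\Big(\sum_{i=1}^{n}a_i^2+2a_1a_2\Big),
\end{equation*}
which is Cauchy--Schwarz applied to the $n-1$ numbers $a_1+a_2,a_3,\dots,a_n$ and gives $\tfrac{1}{2(n-1)}\big(\sum_i a_i\big)^2\le\tfrac12\sum_i a_i^2+a_1a_2$. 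Summed over $r$, this controls exactly the gap $\tfrac{n^2}{2(n-1)}\|\widetilde{H}\|^2$ between the $\tfrac{n^2}{2}\|\widetilde{H}\|^2$ coming from the difference and the $\tfrac{n^2(n-2)}{2(n-1)}\|\widetilde{H}\|^2$ of (2.40).

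The main obstacle is precisely the non-symmetry $\widetilde{B}(X,Y)\ne\widetilde{B}(Y,X)$, which replaces the squared norms $\|\widetilde{B}(E_i,E_j)\|^2$ of the classical (Mihai--$\ddot{{\rm O}}$zg$\ddot{{\rm u}}$r) setting by the asymmetric cross terms $g(\widetilde{B}(E_j,E_i),\widetilde{B}(E_i,E_j))$ and $-g(\widetilde{B}(E_2,E_1),\widetilde{B}(E_1,E_2))$. I would dispose of these by completing the square: for each $r$ the combined off-diagonal contribution equals $\tfrac12\sum_{i<j}(\widetilde{h}_{ij}^{r}+\widetilde{h}_{ji}^{r})^2$, and subtracting $\widetilde{h}_{21}^{r}\widetilde{h}_{12}^{r}$ from its $(1,2)$ part leaves $\tfrac12\big((\widetilde{h}_{12}^{r})^2+(\widetilde{h}_{21}^{r})^2\big)\ge0$, so every extra term is nonnegative and Chen's lemma closes the estimate. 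A final bookkeeping check is required to confirm that the bracket contributions generated separately by $\widetilde{\tau}^D$ (namely $A^D$) and by $\widetilde{K}^D(\Pi)$ (namely the bracket inside $\Omega^{\Pi}$) recombine into exactly the right-hand side of (2.40), together with the equality discussion $a_1+a_2=a_3=\cdots=a_n$ inherited from the lemma.
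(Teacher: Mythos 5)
Your proposal is correct and follows essentially the same route as the paper: substituting the real-space-form expression of $\widetilde{R}$ into the Gauss equation, computing $\widetilde{K}^D(\Pi)$ and $\widetilde{\tau}^D$ (your formula for $\widetilde{K}^D(\Pi)$ is exactly the paper's (2.44)), and splitting the quadratic terms into a diagonal part controlled by Chen's algebraic lemma and an off-diagonal part bounded by $\tfrac12\|\widetilde{B}\|^2$. The only cosmetic differences are that you prove the diagonal lemma inline via Cauchy--Schwarz where the paper cites Lemma 2.4 of \cite{ZZS}, and you phrase the off-diagonal estimate as completing the square $\tfrac12(\widetilde{h}^r_{ij}+\widetilde{h}^r_{ji})^2\ge 0$, which is identical to the paper's pairwise bound $-ab\le\tfrac12(a^2+b^2)$ in (2.49).
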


\begin{proof}
Let $\{E_1,\cdots,E_n\}$ and $\{E_{n+1},\cdots,E_{m}\}$ be orthonormal basis of $D$ and $D^\bot$ respectively. Let $E_1,E_2$ be the orthonormal basis of $\Pi\subset D$.
By (2.34), we obtain
\begin{align}
\widetilde{R}(E_1,E_2,E_1,E_2)=-c+\alpha(E_1,E_1)+\alpha(E_2,E_2).
\end{align}
By (2.37), we have
\begin{align}
\widetilde{R}(E_1,E_2,E_1,E_2)&=\widetilde{R}^D(E_1,E_2,E_1,E_2)+g(\widetilde{B}(E_2,E_2),\widetilde{B}(E_1,E_1))\\
&-g(\widetilde{B}(E_1,E_2),\widetilde{B}(E_2,E_1))
+g(B(E_1,E_2),[E_1,E_2]).\notag
\end{align}
By (2.40) and (2.41), we obtain
\begin{align}
\widetilde{R}^D(E_1,E_2,E_1,E_2)=&-c+\alpha(E_1,E_1)+\alpha(E_2,E_2)\\
&-g(B(E_1,E_2),[E_1,E_2])
-\sum_{r=n+1}^{m}[\widetilde{h}^r_{11}\widetilde{h}^r_{22}-\widetilde{h}^r_{12}\widetilde{h}^r_{21}]
.\notag
\end{align}
Similarly to (2.42), we have
\begin{align}
\widetilde{R}^D(E_1,E_2,E_2,E_1)=&c-\alpha(E_1,E_1)-\alpha(E_2,E_2)\\
&-g(B(E_2,E_1),[E_1,E_2])
+\sum_{r=n+1}^{m}[\widetilde{h}^r_{11}\widetilde{h}^r_{22}-\widetilde{h}^r_{12}\widetilde{h}^r_{21}]
.\notag
\end{align}
By (2.35),(2.42) and (2.43), we obtain
\begin{align}
\widetilde{K}^D(\Pi)=&c-\Omega^{\Pi}
+\sum_{r=n+1}^{m}[\widetilde{h}^r_{11}\widetilde{h}^r_{22}-\widetilde{h}^r_{12}\widetilde{h}^r_{21}].
\end{align}
Similarly to (2.43), we have
\begin{align}
\widetilde{R}^D(E_i,E_j,E_j,E_i)=&c-\alpha(E_j,E_j)-\alpha(E_i,E_i)
-g(\widetilde{B}(E_j,E_i),\widetilde{B}(E_i,E_j))\\
&+g(\widetilde{B}(E_i,E_i),\widetilde{B}(E_j,E_j))
-g(B(E_j,E_i),[E_i,E_j]).\notag
\end{align}
Then
\begin{align}
\widetilde{\tau}^D&=\frac{1}{2}\sum_{1\leq i\neq j\leq n}{\widetilde{R}^D}(E_i,E_j,E_j,E_i)\\
&=\frac{n(n-1)}{2}c-(n-1)\lambda+A^D+\sum_{r=n+1}^{m}\sum_{1\leq i<j\leq n}[\widetilde{h}^r_{ii}\widetilde{h}^r_{jj}-\widetilde{h}^r_{ij}\widetilde{h}^r_{ji}].\notag
\end{align}
So
\begin{align}
\widetilde{\tau}^D-\widetilde{K}^D(\Pi)&=\frac{(n+1)(n-2)}{2}c-(n-1)\lambda+A^D+\Omega^{\Pi}\\
&~+\sum_{r=n+1}^{m}[\sum_{1\leq i<j\leq n}\widetilde{h}^r_{ii}\widetilde{h}^r_{jj}-\widetilde{h}^r_{11}\widetilde{h}^r_{22}-\sum_{1\leq i<j\leq n}\widetilde{h}^r_{ij}\widetilde{h}^r_{ji}+\widetilde{h}^r_{12}
\widetilde{h}^r_{21}]\notag\\
&=\frac{(n+1)(n-2)}{2}c-(n-1)\lambda+A^D+\Omega^{\Pi}\notag\\
&~+\sum_{r=n+1}^{m}[(\widetilde{h}^r_{11}+\widetilde{h}^r_{22})\sum_{3\leq j\leq n}\widetilde{h}^r_{jj}
+\sum_{3\leq i<j\leq n}\widetilde{h}^r_{ii}\widetilde{h}^r_{jj}
-\sum_{1\leq i<j\leq n}\widetilde{h}^r_{ij}\widetilde{h}^r_{ji}+\widetilde{h}^r_{12}
\widetilde{h}^r_{21}]\notag
\end{align}
By Lemma 2.4 in \cite{ZZS}, we get
\begin{equation} \label{C4}
\sum_{r=n+1}^{m}[(\widetilde{h}^r_{11}+\widetilde{h}^r_{22})\sum_{3\leq j\leq n}\widetilde{h}^r_{jj}
+\sum_{3\leq i<j\leq n}\widetilde{h}^r_{ii}\widetilde{h}^r_{jj}]
\leq \frac{n^2(n-2)}{2(n-1)}\|\widetilde{H}\|^2.
\end{equation}
We note that
\begin{align}
&~\sum_{r=n+1}^{m}[
-\sum_{1\leq i<j\leq n}\widetilde{h}^r_{ij}\widetilde{h}^r_{ji}+\widetilde{h}^r_{12}
\widetilde{h}^r_{21}]\\
&=\sum_{r=n+1}^{m}[-\sum_{3\leq j\leq n}\widetilde{h}^r_{1j}\widetilde{h}^r_{j1}
-\sum_{2\leq i<j\leq n}\widetilde{h}^r_{ij}\widetilde{h}^r_{ji}]\notag\\
&\leq\sum_{r=n+1}^{m}[\sum_{3\leq j\leq n}\frac{(\widetilde{h}^r_{1j})^2+(\widetilde{h}^r_{j1})^2}{2}
+\sum_{2\leq i<j\leq n}\frac{(\widetilde{h}^r_{ij})^2+(\widetilde{h}^r_{ji})^2}{2}]\notag\\
&\leq\sum_{r=n+1}^{m}[\sum_{3\leq j\leq n}\frac{(\widetilde{h}^r_{1j})^2+(\widetilde{h}^r_{j1})^2}{2}
+\sum_{2\leq i<j\leq n}\frac{(\widetilde{h}^r_{ij})^2+(\widetilde{h}^r_{ji})^2}{2}
+\sum_{i=1}^n\frac{(\widetilde{h}^r_{ii})^2}{2}+\frac{(\widetilde{h}^r_{12})^2+(\widetilde{h}^r_{21})^2}{2}
]\notag\\
&=\frac{\|\widetilde{B}\|^2}{2}.\notag
\end{align}
By (2.47)-(2.49), we get (2.39).
\end{proof}

\begin{cor} If $U\in\Gamma(D)$, then $H= \widetilde{H}$. In this case, the inequality in Theorem 2.10 becomes
\begin{align}
\widetilde{\tau}^D-\widetilde{K}^D(\Pi)&\leq \frac{(n+1)(n-2)}{2}c-(n-1)\lambda+A^D+\Omega^{\Pi}+\frac{n^2(n-2)}{2(n-1)}\|{H}\|^2+\frac{1}{2}||{B}||^2.
\end{align}
\end{cor}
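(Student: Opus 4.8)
The plan is to observe that the hypothesis $U\in\Gamma(D)$ forces the $D^{\bot}$-component of $U$ to vanish, and that this collapses the second fundamental form and the mean curvature with respect to $\widetilde{\nabla}$ onto those with respect to $\nabla$. First I would record that $U\in\Gamma(D)$ means precisely $U^{D^\bot}=\pi^{D^\bot}U=0$. Feeding this into the second identity of (2.6), namely $\widetilde{B}(X,Y)=B(X,Y)-g(X,Y)U^{D^\bot}$, gives $\widetilde{B}(X,Y)=B(X,Y)$ for all $X,Y\in\Gamma(D)$, so the two bilinear forms coincide; in particular their squared lengths agree, $\|\widetilde{B}\|^2=\|B\|^2$, and componentwise $\widetilde{h}^r_{ij}=h^r_{ij}$.

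For the mean curvature I would use the relation $\widetilde{H}=H-U^{D^\bot}$ established just after Theorem 2.1 (equivalently Proposition 2.2(2)); with $U^{D^\bot}=0$ this yields the first assertion $H=\widetilde{H}$, and hence $\|\widetilde{H}\|^2=\|H\|^2$.

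Finally I would substitute these two equalities into the inequality (2.39) of Theorem 2.10. The remaining terms on the right-hand side are unaffected: the constant $c$, the trace $\lambda=\sum_{j}\alpha(E_j,E_j)$, the quantity $A^D$, and $\Omega^{\Pi}$ are all built from $\nabla$, $\omega$, $U$ and $B$ and make no reference to $\widetilde{B}$ or $\widetilde{H}$, so they carry over verbatim. Replacing $\frac{n^2(n-2)}{2(n-1)}\|\widetilde{H}\|^2+\frac{1}{2}\|\widetilde{B}\|^2$ by $\frac{n^2(n-2)}{2(n-1)}\|H\|^2+\frac{1}{2}\|B\|^2$ then produces exactly (2.50). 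There is no real obstacle here: the statement is a direct specialization of Theorem 2.10 once the vanishing of $U^{D^\bot}$ is exploited, and the only point requiring a word of care is checking that none of the invariants $\lambda$, $A^D$, $\Omega^{\Pi}$ secretly depend on the connection through $\widetilde{B}$ rather than $B$.
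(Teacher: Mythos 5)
Your proposal is correct and is precisely the argument the paper intends: the corollary is stated without proof there because it follows immediately from Proposition 2.2(2) (equivalently $\widetilde{H}=H-U^{D^\bot}$) and the identity $\widetilde{B}(X,Y)=B(X,Y)-g(X,Y)U^{D^\bot}$ of (2.6), exactly as you argue. Your added check that $\lambda$, $A^D$, and $\Omega^{\Pi}$ involve only $\nabla$, $\omega$, $U$, and $B$ is the right point of care and matches the paper's definitions in (2.38).
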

\begin{cor}
 The equality case of (2.39) holds if and only if $D$ is totally geodesic with respect to $\widetilde{\nabla}$ and $\widetilde{h}^r_{12}=\widetilde{h}^r_{21}=0.$
\end{cor}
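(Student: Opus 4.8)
The plan is to trace back through the proof of Theorem 2.10 and locate precisely where inequality enters, since every step from (2.36)--(2.37) down to identity (2.47) is an equality; the only estimates used are (2.48) and (2.49). Writing (2.47) as an explicit curvature/torsion part plus $\sum_r S^r_1+\sum_r S^r_2$, where $S^r_1=(\widetilde{h}^r_{11}+\widetilde{h}^r_{22})\sum_{3\le j\le n}\widetilde{h}^r_{jj}+\sum_{3\le i<j\le n}\widetilde{h}^r_{ii}\widetilde{h}^r_{jj}$ is bounded via (2.48) and $S^r_2=-\sum_{1\le i<j\le n}\widetilde{h}^r_{ij}\widetilde{h}^r_{ji}+\widetilde{h}^r_{12}\widetilde{h}^r_{21}$ via (2.49), I note that the right-hand side of (2.39) is obtained by replacing $\sum_r S^r_1$ and $\sum_r S^r_2$ by their respective upper bounds. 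Hence equality in (2.39) holds if and only if equality holds simultaneously in (2.48) and in (2.49), and the whole problem reduces to determining the equality cases of these two estimates.

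First I would analyze the equality case of (2.49). Its chain uses two distinct steps: the elementary bound $-ab\le\tfrac12(a^2+b^2)$ applied to each product $\widetilde{h}^r_{ij}\widetilde{h}^r_{ji}$ with $(i,j)\in\{(1,j):3\le j\le n\}\cup\{(i,j):2\le i<j\le n\}$, followed by discarding the nonnegative remainder $\sum_r\bigl[\sum_{i=1}^n\tfrac12(\widetilde{h}^r_{ii})^2+\tfrac12\bigl((\widetilde{h}^r_{12})^2+(\widetilde{h}^r_{21})^2\bigr)\bigr]$. Since $-ab=\tfrac12(a^2+b^2)$ forces $a=-b$, equality in the first step yields $\widetilde{h}^r_{ij}=-\widetilde{h}^r_{ji}$ for every off-diagonal pair $\{i,j\}\ne\{1,2\}$, while equality in the second forces the discarded remainder to vanish, i.e. $\widetilde{h}^r_{ii}=0$ for all $i$ and $\widetilde{h}^r_{12}=\widetilde{h}^r_{21}=0$, for every $r$.

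Next I would dispose of (2.48). Its equality case, given by Lemma 2.4 of \cite{ZZS} (Chen's algebraic lemma applied to the $n-1$ numbers $\widetilde{h}^r_{11}+\widetilde{h}^r_{22},\widetilde{h}^r_{33},\dots,\widetilde{h}^r_{nn}$, whose sum is $n\,g(\widetilde H,E_r)$), reads $\widetilde{h}^r_{11}+\widetilde{h}^r_{22}=\widetilde{h}^r_{33}=\cdots=\widetilde{h}^r_{nn}$. But the conditions already extracted from (2.49) give $\widetilde{h}^r_{ii}=0$ for every $i$, so $\|\widetilde H\|^2=0$ and both sides of (2.48) vanish. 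Thus equality in (2.48) is automatic once equality in (2.49) holds, and the equality case of (2.39) is controlled entirely by antisymmetry of the off-diagonal components outside $\{1,2\}$, vanishing of all diagonal components, and vanishing of the $(1,2)$ and $(2,1)$ components.

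Finally I would repackage these conditions invariantly. Together they give $\widetilde{h}^r_{ij}=-\widetilde{h}^r_{ji}$ for all $i,j,r$, equivalently $\widetilde{B}(X,Y)+\widetilde{B}(Y,X)=0$ for all $X,Y\in\Gamma(D)$, which is exactly total geodesy with respect to $\widetilde{\nabla}$, supplemented by the extra requirement $\widetilde{h}^r_{12}=\widetilde{h}^r_{21}=0$; the converse implication back to the three componentwise conditions is immediate, closing the equivalence. The one delicate point I would verify most carefully is the bookkeeping separating the pair $\{1,2\}$ from all others: total geodesy alone yields only $\widetilde{h}^r_{12}=-\widetilde{h}^r_{21}$, so the independent condition $\widetilde{h}^r_{12}=\widetilde{h}^r_{21}=0$ is genuinely extra, and it is precisely this that the discarded remainder term in (2.49) supplies.
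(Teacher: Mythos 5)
Your proposal is correct and follows essentially the same route as the paper: reduce equality in (2.39) to simultaneous equality in (2.48) and (2.49), extract from (2.49) the conditions $\widetilde{h}^r_{ii}=0$, $\widetilde{h}^r_{12}=\widetilde{h}^r_{21}=0$ and antisymmetry of the remaining off-diagonal entries, and combine with the equality case $\widetilde{h}^r_{11}+\widetilde{h}^r_{22}=\widetilde{h}^r_{33}=\cdots=\widetilde{h}^r_{nn}$ of Lemma 2.4 in the cited reference. Your write-up is merely more explicit than the paper's (in particular in observing that the equality case of (2.48) becomes automatic once the conditions from (2.49) force all diagonal entries, hence $\widetilde{H}$, to vanish, and in repackaging the componentwise conditions as total geodesy with respect to $\widetilde{\nabla}$), but the underlying argument is identical.
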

\begin{proof}
The equality case of (2.49) holds if and only if $\widetilde{h}^r_{ii}=0$, for $1\leq i\leq n$,
$\widetilde{h}^r_{12}=\widetilde{h}^r_{21}=0$ and $\widetilde{h}^r_{1j}=-\widetilde{h}^r_{j1}$ for $3\leq j \leq n$ and $\widetilde{h}^r_{kl}=-\widetilde{h}^r_{lk}$
for $2\leq k<l \leq n$.\\
\indent The equality case of (2.48) holds if and only if $\widetilde{h}^r_{11}+\widetilde{h}^r_{22}=\widetilde{h}^r_{ii}$ for $3\leq i \leq n$. So Corollary 2.12 holds.
\end{proof}

For each unit vector field $X\in\Gamma(D)$, we choose the orthonormal basis $\{E_1,\cdots, E_n\}$ of $D$ such that $E_1=X$. We define
\begin{align}
&\widetilde{{\rm Ric}}^D(X)=\sum_{j=2}^n\widetilde{R}^D(X,E_j,E_j,X);~~~A^D(X)=\sum_{j=2}^ng(B(E_j,X),[E_j,X])\\
&\|\widetilde{B}^X\|^2=\sum_{j=2}^n[g(\widetilde{B}(X,E_j),\widetilde{B}(X,E_j))+g(\widetilde{B}(E_j,X),\widetilde{B}(E_j,X))].\notag
\end{align}

\begin{thm}
 Let $TM=D\oplus D^{\bot}$, ${\rm dim}D=n\geq 2$, and let $M$ be a manifold with constant sectional curvature $c$ endowed with a connection
$\widetilde{\nabla}$, then
\begin{align}
\widetilde{{\rm Ric}}^D(X)&\leq (n-1)c-\lambda+(2-n)\alpha(X,X)
+\frac{n^2}{4}\|\widetilde{H}\|^2+\frac{\|\widetilde{B}^X\|^2}{2}+A^D(X).
\end{align}
\end{thm}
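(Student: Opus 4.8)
The plan is to contract the componentwise Gauss identity (2.45) over a single index, in direct analogy with the $2$-plane estimate of Theorem 2.10. First I would set $E_i=E_1=X$ in (2.45) and sum over $j=2,\dots,n$. Writing $\widetilde{h}^r_{ij}=g(\widetilde{B}(E_i,E_j),E_r)$ and using $\lambda=\sum_{k=1}^n\alpha(E_k,E_k)$ together with $[X,E_j]=-[E_j,X]$ (so that $-\sum_{j=2}^n g(B(E_j,X),[X,E_j])$ is exactly $A^D(X)$ from (2.51)), this yields
\begin{align*}
\widetilde{{\rm Ric}}^D(X)=&(n-1)c-\bigl(\lambda-\alpha(X,X)\bigr)-(n-1)\alpha(X,X)+A^D(X)\\
&+\sum_{r=n+1}^m\widetilde{h}^r_{11}\sum_{j=2}^n\widetilde{h}^r_{jj}-\sum_{r=n+1}^m\sum_{j=2}^n\widetilde{h}^r_{j1}\widetilde{h}^r_{1j}.
\end{align*}
The $\alpha$-terms collapse to $-\lambda+(2-n)\alpha(X,X)$, so the linear part already reproduces the first three summands on the right-hand side of (2.53), and it remains only to estimate the two quadratic sums.

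For the first quadratic sum I would fix $r$ and set $a=\widetilde{h}^r_{11}$, $b=\sum_{j=2}^n\widetilde{h}^r_{jj}$, so that $\sum_{i=1}^n\widetilde{h}^r_{ii}=a+b$; the elementary inequality $ab\le\tfrac14(a+b)^2$ (which is just $(a-b)^2\ge0$) then gives
\begin{align*}
\sum_{r=n+1}^m\widetilde{h}^r_{11}\sum_{j=2}^n\widetilde{h}^r_{jj}\le\frac14\sum_{r=n+1}^m\Bigl(\sum_{i=1}^n\widetilde{h}^r_{ii}\Bigr)^2=\frac{n^2}{4}\|\widetilde{H}\|^2,
\end{align*}
the last equality being the definition of $\widetilde{H}$. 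For the second sum I would apply $-xy\le\tfrac12(x^2+y^2)$ with $x=\widetilde{h}^r_{j1}$ and $y=\widetilde{h}^r_{1j}$, obtaining
\begin{align*}
-\sum_{r=n+1}^m\sum_{j=2}^n\widetilde{h}^r_{j1}\widetilde{h}^r_{1j}\le\frac12\sum_{r=n+1}^m\sum_{j=2}^n\bigl[(\widetilde{h}^r_{j1})^2+(\widetilde{h}^r_{1j})^2\bigr]=\frac12\|\widetilde{B}^X\|^2,
\end{align*}
which matches the definition of $\|\widetilde{B}^X\|^2$ in (2.51). Adding the two bounds to the linear part produces (2.53).

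The argument is essentially routine once (2.45) is contracted; the only delicate points are the asymmetry of $\widetilde{B}$, which forces $\widetilde{h}^r_{ij}$ and $\widetilde{h}^r_{ji}$ to be tracked separately throughout and is the reason both orderings appear symmetrically in $\|\widetilde{B}^X\|^2$, and the sign bookkeeping on the bracket term, which is exactly what dictates that $A^D(X)$ be written with $[E_j,X]$ rather than $[X,E_j]$. I do not expect any genuine obstacle beyond this bookkeeping, since the estimating mechanism is identical to that already used in the proof of Theorem 2.10.
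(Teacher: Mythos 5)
Your proposal is correct and follows essentially the same route as the paper: contract the pointwise Gauss identity (2.45) with $E_1=X$ and sum over $j$, collapse the $\alpha$-terms to $-\lambda+(2-n)\alpha(X,X)$ and the bracket terms to $A^D(X)$, then bound the diagonal product term by $\frac{n^2}{4}\|\widetilde{H}\|^2$ and the cross terms $-\sum_{r,j}\widetilde{h}^r_{1j}\widetilde{h}^r_{j1}$ by $\frac{1}{2}\|\widetilde{B}^X\|^2$ via AM--GM, exactly as in (2.53)--(2.55). The only difference is that where the paper invokes Lemma 2.5 of \cite{ZZS} for the mean-curvature estimate, you prove it inline from $ab\le\tfrac14(a+b)^2$, which is a harmless, self-contained substitution.
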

\begin{proof}
Similarly to (2.45), we have
\begin{align}
\widetilde{{\rm Ric}}^D(X)&\leq (n-1)c-\lambda+(2-n)\alpha(X,X)
+\sum_{r=n+1}^{m}\sum_{j=2}^n[\widetilde{h}^r_{11}\widetilde{h}^r_{jj}-\widetilde{h}^r_{1j}\widetilde{h}^r_{j1}]
+A^D(X).
\end{align}
By Lemma 2.5 in \cite{ZZS}, we get
\begin{equation}
\sum_{r=n+1}^{n+p}\sum_{j=2}^n\widetilde{h}^r_{11}\widetilde{h}^r_{jj}\leq \frac{n^2}{4}\|\widetilde{H}\|^2.
\end{equation}
We note that
\begin{align}
-\sum_{r=n+1}^{m}\sum_{j=2}^n\widetilde{h}^r_{1j}\widetilde{h}^r_{j1}\leq \sum_{r=n+1}^{m}\sum_{j=2}^n \frac{(\widetilde{h}^r_{1j})^2+(\widetilde{h}^r_{j1})^2}{2}=\frac{\|\widetilde{B}^X\|^2}{2}.
\end{align}
By (2.53)-(2.55), we get (2.52).
\end{proof}

\begin{cor}
 The equality case of (2.52) holds if and only if $\widetilde{h}^r_{1j}=-\widetilde{h}^r_{j1}$ for $2\leq j\leq n$ and $\widetilde{h}^r_{11}-\widetilde{h}^r_{22}-\cdots-\widetilde{h}^r_{nn}=0.$
\end{cor}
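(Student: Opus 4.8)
The plan is to identify, one at a time, the equality cases of the two estimates (2.54) and (2.55) on which the proof of Theorem 2.13 rests; since (2.52) is obtained by adding these to the identity (2.53), equality in (2.52) holds precisely when both (2.54) and (2.55) are equalities. This parallels the derivation of Corollary 2.12 from the equality cases of (2.48) and (2.49).

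First I would treat (2.55). That bound is the pointwise inequality $-ab\le\tfrac12(a^2+b^2)$, equivalently $0\le\tfrac12(a+b)^2$, applied with $a=\widetilde{h}^r_{1j}$ and $b=\widetilde{h}^r_{j1}$ and summed over $r$ and $j$. A sum of nonnegative terms vanishes if and only if each term vanishes, so equality in (2.55) holds if and only if $\widetilde{h}^r_{1j}=-\widetilde{h}^r_{j1}$ for all $2\le j\le n$ and all $n+1\le r\le m$. This is the first condition in the statement.

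Next I would treat (2.54), which is Lemma 2.5 of \cite{ZZS}. Writing the components of the mean curvature vector as $\widetilde{H}^r$, so that $\sum_{i=1}^n\widetilde{h}^r_{ii}=n\widetilde{H}^r$ and $\|\widetilde{H}\|^2=\sum_r(\widetilde{H}^r)^2$, one has $\sum_{j=2}^n\widetilde{h}^r_{jj}=n\widetilde{H}^r-\widetilde{h}^r_{11}$. The $r$-th slack in (2.54) is then
\[
\frac{n^2}{4}(\widetilde{H}^r)^2-\widetilde{h}^r_{11}\bigl(n\widetilde{H}^r-\widetilde{h}^r_{11}\bigr)=\Bigl(\tfrac{n}{2}\widetilde{H}^r-\widetilde{h}^r_{11}\Bigr)^2\ge 0,
\]
so equality holds for each $r$ exactly when $\widetilde{h}^r_{11}=\tfrac12\sum_{i=1}^n\widetilde{h}^r_{ii}$, i.e. $\widetilde{h}^r_{11}-\widetilde{h}^r_{22}-\cdots-\widetilde{h}^r_{nn}=0$. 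This is the second condition.

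Intersecting the two equality loci yields the claimed characterization. I do not expect a genuine obstacle: both bounds arise from completing a square, so their equality cases are transparent. The only point deserving a word of care is that these conditions are expressed with respect to the adapted frame with $E_1=X$ fixed at the outset of the proof of Theorem 2.13, exactly as in the classical Chen--Ricci equality discussion; no separate frame-independence argument is required.
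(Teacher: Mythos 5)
Your proposal is correct and follows exactly the route the paper intends: the paper states this corollary without proof, but its proof of the parallel Corollary 2.12 proceeds in the same way, by intersecting the equality cases of the two estimates used in the preceding theorem, here (2.54) and (2.55), whose equality loci are precisely the two stated conditions (note that (2.53), despite being displayed with ``$\leq$'', is in fact an identity obtained by summing the analogue of (2.45), as you correctly treat it). Your only addition is a self-contained completing-the-square verification of the equality case of (2.54) in place of citing Lemma 2.5 of the reference, which is harmless and makes the argument more transparent.
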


\vskip 1 true cm

\section{ Non integrable distributions with a semi-symmetric non-metric connection}

\indent Let $U\in \Gamma(TM)$ be a vector field and $\omega$ be a $1$-form defined by $\omega(V)=g(U,V)$ for any $V\in \Gamma(TM)$. We define the semi-symmetric non-metric connection on $M$
\begin{equation}
\widehat{\nabla}_XY=\nabla_{X}Y+\omega(Y)X.
\end{equation}
 Let for $X,Y\in\Gamma(D)$
\begin{equation}
\widehat{\nabla}_XY=\widehat{\nabla}^D_{X}Y+\widehat{B}(X,Y),~~\widehat{\nabla}^D_{X}Y=\pi^D\widehat{\nabla}_{X}Y,~~\widehat{B}(X,Y)=\pi^{D^\bot}\widehat{\nabla}_XY.
\end{equation}
We call the $\widehat{B}(X,Y)$ the second fundamental form with respect to the semi-symmetric non-metric connection. By (3.1) and (3.2), we have
\begin{equation}
\widehat{\nabla}^D_{X}Y={\nabla}^D_{X}Y+\omega(Y)X,~~\widehat{B}(X,Y)={B}(X,Y).
\end{equation}
By (3.3), we have

\begin{align}
&\widehat{\nabla}^D_{X}(g^D)(Y,Z)=-\omega(Y)g^D(X,Z)-\omega(Z)g^D(X,Y),\\
&\widehat{T}^D(X,Y)=-[X,Y]^{D^\bot}+\omega(Y)X-\omega(X)Y.\notag
\end{align}
Similarly to the case $D=TM$, we have
\begin{thm}
There exists a unique linear connection $\widehat{\nabla}^D: \Gamma(D)\times \Gamma(D)\rightarrow \Gamma(D)$ on $D$, which satisfies the property (3.4).
\end{thm}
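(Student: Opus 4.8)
The plan is to split the statement into existence and uniqueness and to observe that existence has essentially already been supplied: the connection defined by (3.1)--(3.2), which by (3.3) is the explicit operator $\widehat{\nabla}^D_XY=\nabla^D_XY+\omega(Y)X$, maps $\Gamma(D)\times\Gamma(D)$ into $\Gamma(D)$ (since $\nabla^D_XY\in\Gamma(D)$ and $\omega(Y)X\in\Gamma(D)$) and satisfies (3.4) by the very derivation preceding the theorem. So first I would check that this formula is genuinely a linear connection. The identity $\widehat{\nabla}^D_{fX}Y=f\widehat{\nabla}^D_XY$ is immediate, and the Leibniz rule $\widehat{\nabla}^D_X(fY)=X(f)Y+f\widehat{\nabla}^D_XY$ follows because the correction term $\omega(fY)X=f\omega(Y)X$ is $C^\infty(M)$-linear in the $Y$-slot, so the derivative $X(f)Y$ comes solely from $\nabla^D$, which already obeys (2.1). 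This disposes of existence and reduces the theorem to uniqueness.

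For uniqueness I would run the standard Koszul argument, adapted to the non-integrable, non-metric setting. Suppose $\widehat{\nabla}^D$ is any connection on $D$ satisfying (3.4). Writing the non-metricity condition in (3.4) as
\[
X\,g^D(Y,Z)=g^D(\widehat{\nabla}^D_XY,Z)+g^D(Y,\widehat{\nabla}^D_XZ)-\omega(Y)g^D(X,Z)-\omega(Z)g^D(X,Y)
\]
for $X,Y,Z\in\Gamma(D)$, I would form the cyclic combination obtained by adding to this identity its image under the substitution $X\to Y\to Z\to X$ and subtracting its image under $X\to Z\to Y\to X$. In that combination the torsion identity $\widehat{\nabla}^D_AB-\widehat{\nabla}^D_BA=[A,B]^D+\omega(B)A-\omega(A)B$, read off from the second line of (3.4), lets me trade every $\widehat{\nabla}^D$ with its arguments reversed; the two ``unwanted'' terms $g^D(Y,\widehat{\nabla}^D_ZX)$ and $g^D(X,\widehat{\nabla}^D_ZY)$ then cancel in pairs, leaving $2g^D(\widehat{\nabla}^D_XY,Z)$ on one side and, on the other, an expression built only from the metric $g^D$, the one-form $\omega$, and the brackets $[\cdot,\cdot]^D$. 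Since none of these data depend on the connection and $g^D$ is non-degenerate on $D$, this Koszul-type identity determines $\widehat{\nabla}^D_XY$ uniquely, so any two connections satisfying (3.4) coincide.

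The one point requiring care, and the main obstacle, is the bookkeeping of the bracket terms in the non-integrable setting. Because $D$ is not involutive, $[X,Z]$ need not lie in $\Gamma(D)$, so after substituting the torsion identity I must consistently work with the projected brackets $[X,Z]^D$; this is legitimate precisely because the free slot is always paired against a field in $\Gamma(D)$ under $g^D$, whence $g^D(Y,[X,Z]^D)=g(Y,[X,Z])$ and the $D^{\bot}$-part is invisible. I would verify that the cancellation of the $\widehat{\nabla}^D_ZX$ and $\widehat{\nabla}^D_ZY$ terms indeed survives this projection, and then collect the surviving $\omega$-terms coming from both the non-metricity corrections and the torsion corrections. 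Once this is confirmed the computation is formally identical to the classical $D=TM$ Koszul derivation, which is exactly the content of the phrase ``similarly to the case $D=TM$''; the same scheme re-proves Theorem 2.1 with the present non-metricity condition replaced there by $\widetilde{\nabla}^D(g^D)=0$.
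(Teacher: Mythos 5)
Your proposal is correct and coincides with the argument the paper intends: Theorem 3.1 is stated with no written proof, only the phrase ``similarly to the case $D=TM$'', and that classical argument is exactly what you give---existence from the explicit formula (3.3), uniqueness by the Koszul trick adapted to projected brackets. Your scheme does close: it yields $2g^D(\widehat{\nabla}^D_XY,Z)=Xg^D(Y,Z)+Yg^D(Z,X)-Zg^D(X,Y)+g^D([X,Y]^D,Z)-g^D([X,Z]^D,Y)-g^D([Y,Z]^D,X)+2\omega(Y)g^D(X,Z)$, whose right-hand side involves only $g^D$, $\omega$ and the projected brackets, so any connection satisfying (3.4) is determined and equals ${\nabla}^D_XY+\omega(Y)X$.
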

We may define the mean curvature vector and minimal distributions and totally geodesic distributions with respect to $\widehat{\nabla}.$ We have

\begin{prop}
$D$ is minimal (resp. totally geodesic, umbilical) with respect to
$\nabla$ if and only if  $D$ is minimal (resp. totally geodesic, umbilical) with respect to
$\widehat{\nabla}$.
\end{prop}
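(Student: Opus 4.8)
The plan is to reduce the entire proposition to the single identity $\widehat{B}(X,Y)=B(X,Y)$ recorded in Equation (3.3). The three notions appearing in the statement---minimal, totally geodesic, and umbilical---are each defined purely through the second fundamental form of the connection in question, so once the two second fundamental forms are seen to coincide, the three equivalences follow at once, and without any side hypothesis.

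First I would fix an orthonormal basis $\{E_1,\cdots,E_n\}$ of $D$ and form the mean curvature vector $\widehat{H}=\frac{1}{n}\sum_{i=1}^n\widehat{B}(E_i,E_i)$ associated to $\widehat{\nabla}$. Substituting $\widehat{B}=B$ gives $\widehat{H}=H$, so $\widehat{H}=0$ if and only if $H=0$; this is the equivalence for minimality. For the totally geodesic case, the defining condition relative to $\widehat{\nabla}$ is $\widehat{B}(X,Y)+\widehat{B}(Y,X)=0$ for all $X,Y\in\Gamma(D)$, and by $\widehat{B}=B$ this is verbatim the condition $B(X,Y)+B(Y,X)=0$ defining total geodesy relative to $\nabla$. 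Finally, writing $\widehat{h}(X,Y)=\frac{1}{2}[\widehat{B}(X,Y)+\widehat{B}(Y,X)]$, the identity $\widehat{B}=B$ gives both $\widehat{h}=h$ and $\widehat{H}=H$, so the umbilicity equations $\widehat{h}=\widehat{H}g^D$ and $h=Hg^D$ are literally the same equation; this settles the umbilical equivalence.

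I do not expect a genuine obstacle, since all of the content is already packaged in (3.3). The one point worth flagging is the contrast with the semi-symmetric metric case treated in Propositions 2.2 and 2.3, where $\widetilde{B}(X,Y)=B(X,Y)-g(X,Y)U^{D^\bot}$ carries the extra term $-g(X,Y)U^{D^\bot}$, forcing the analogous equivalences to require $U\in\Gamma(D)$. Here the semi-symmetric non-metric connection alters $\nabla$ only through the tangential term $\omega(Y)X$, leaving the normal projection---and hence $B$ itself---untouched; this is precisely why no condition on $U$ is needed and why all three equivalences hold unconditionally.
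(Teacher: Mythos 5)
Your proof is correct and follows exactly the route the paper intends: the paper states this proposition without explicit proof because it is immediate from Equation (3.3), namely $\widehat{B}(X,Y)=B(X,Y)$, which is precisely the identity you reduce everything to. Your remark contrasting this with the metric case of Propositions 2.2 and 2.3, where the extra term $-g(X,Y)U^{D^\bot}$ forces the hypothesis $U\in\Gamma(D)$, is also accurate.
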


Let
\begin{equation}
\widehat{\nabla}_X\xi=-\widehat{A}_\xi X+L^{\bot}_X\xi,
\end{equation}
where $\widehat{A}_{\xi}=(A_{\xi}-\omega(\xi))I$.
Similarly to (2.12) and (2.13), we may define $\widehat{R}$ and $\widehat{R}^D,$ then similarly to Theorems 2.4, 2.6, 2.8, we have

\begin{thm}
Given $X,Y,Z,W\in\Gamma(D)$ and $\xi\in\Gamma(D)$, we have
\begin{align}
\widehat{R}(X,Y,Z,W)&=\widehat{R}^D(X,Y,Z,W)-g(B(X,W),B(Y,Z))+g(B(Y,W),B(X,Z))\\
&+g(X,W)\omega(B(Y,Z))-g(Y,W)\omega(B(X,Z))+g(B(Z,W),[X,Y]).\notag
\end{align}

\begin{align}
(\widehat{R}(X,Y)Z)^{D^{\bot}}=&(\widehat{L}^{\bot}_X{B})(Y,Z)-(\widehat{L}^{\bot}_Y{B})(X,Z)\\
&-\omega(X){B}(Y,Z)+\omega(Y){B}(X,Z)-\pi^{D^{\bot}}[{[X,Y]^{D^{\bot}}},Z]\notag\\
&-L^{\bot}_Z([X,Y]^{D^{\bot}})-\omega(Z){[X,Y]^{D^{\bot}}},\notag
\end{align}
where $(\widehat{L}^{\bot}_X{B})(Y,Z)=L^{\bot}_X({B}(Y,Z))-{B}(\widehat{\nabla}^D_XY,Z)-{B}(Y,\widehat{\nabla}^D_XZ).$
\begin{align}
(\widehat{R}(X,Y)\xi)^{D^{\bot}}=-{B}(X,\widehat{A}_{\xi}Y)+{B}(Y,\widehat{A}_{\xi}X)+\widehat{R}^{L^{\bot}}(X,Y)\xi,
\end{align}
where
\begin{align}
\widehat{R}^{L^{\bot}}(X,Y)\xi:=L^{\bot}_XL^{\bot}_Y\xi-L^{\bot}_YL^{\bot}_X\xi-L^{\bot}_{[X,Y]^D}\xi-\pi^{D^{\bot}}\widehat{\nabla}_{[X,Y]^{\bot}}\xi.\notag
\end{align}
\end{thm}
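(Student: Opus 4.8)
The plan is to mirror the proofs of Theorems 2.4, 2.6 and 2.8, exploiting the two simplifications that distinguish the non-metric connection from the metric one: by (3.3) the second fundamental form satisfies $\widehat{B}(X,Y)=B(X,Y)$ exactly (there is no $U^{D^\bot}$-correction), while the Weingarten operator $\widehat{A}_\xi=(A_\xi-\omega(\xi))I$ coincides with the metric-case operator. Since $\widehat{\nabla}_X Y=\nabla_X Y+\omega(Y)X$ carries the same $\omega$-twist as $\widetilde{\nabla}$ but lacks the $-g(X,Y)U$ term, the derivation is structurally identical to Section 2 but drops every contribution involving $g(X,Y)U^{D^\bot}$ or $\omega(U^{D^\bot})$.

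First I would expand the second covariant derivatives. Using the Gauss formula (3.2) and the Weingarten formula (3.5), I would write $\widehat{\nabla}_X\widehat{\nabla}_Y Z$ and $\widehat{\nabla}_Y\widehat{\nabla}_X Z$ in the form analogous to (2.16)--(2.17); because $\widehat{B}=B$, feeding the normal field $B(Y,Z)$ into the shape operator produces $-A_{B(Y,Z)}X+\omega(B(Y,Z))X+L^\bot_X(B(Y,Z))$. For the bracket term I would split $[X,Y]=[X,Y]^D+[X,Y]^{D^\bot}$ and treat $\widehat{\nabla}_{[X,Y]^{D^\bot}}Z$ by means of the torsion identity $\widehat{\nabla}_{X_1}X_2=\widehat{\nabla}_{X_2}X_1+[X_1,X_2]+\omega(X_2)X_1-\omega(X_1)X_2$ together with (3.5), obtaining the analog of (2.20). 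Assembling these pieces into $\widehat{R}(X,Y)Z=\widehat{\nabla}_X\widehat{\nabla}_Y Z-\widehat{\nabla}_Y\widehat{\nabla}_X Z-\widehat{\nabla}_{[X,Y]}Z$ yields a master identity parallel to (2.21), in which the extra term $-\pi^D[[X,Y]^{D^\bot},Z]$ built into the definition of $\widehat{R}^D$ is precisely what makes $\widehat{R}^D$ tensorial.

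From this master identity the three equations follow by projection. For the Gauss equation (3.6) I would take the $D$-component, pair with $W$, and convert shape-operator terms via $g^D(A_\xi X,W)=g^{D^\bot}(B(X,W),\xi)$; since $\widehat{B}=B$, the only surviving $\omega$-contributions form the symmetric pair $g(X,W)\omega(B(Y,Z))-g(Y,W)\omega(B(X,Z))$, which explains why the $g(Y,Z)\omega(B(X,W))$ and $\omega(U^{D^\bot})$ terms of (2.15) are absent here. For the Codazzi equation (3.7) I would take the $D^\bot$-component and eliminate $B([X,Y]^D,Z)$ using $[X,Y]^D=\widehat{\nabla}^D_X Y-\widehat{\nabla}^D_Y X-\omega(Y)X+\omega(X)Y$, regrouping the remaining terms into $(\widehat{L}^\bot_X B)(Y,Z)-(\widehat{L}^\bot_Y B)(X,Z)$ by the stated definition of $(\widehat{L}^\bot_X B)(Y,Z)$. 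For the Ricci equation (3.8), with $\xi\in\Gamma(D^\bot)$, I would compute $\widehat{\nabla}_X\widehat{\nabla}_Y\xi$, $\widehat{\nabla}_Y\widehat{\nabla}_X\xi$ and $\widehat{\nabla}_{[X,Y]}\xi$ by applying (3.5) twice and then retain only the $D^\bot$-parts, so that all $\widehat{A}$-terms drop out and what remains collects into $-B(X,\widehat{A}_\xi Y)+B(Y,\widehat{A}_\xi X)+\widehat{R}^{L^\bot}(X,Y)\xi$ with $\widehat{R}^{L^\bot}$ given by (3.9).

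The main obstacle is purely organizational: carefully tracking which $\omega$-terms are tangential versus normal, and verifying that the tangential ones (such as $\omega(B(Y,Z))X\in\Gamma(D)$) do contribute to the Gauss equation yet vanish under the $D^\bot$-projection driving Codazzi and Ricci. One should also note that, although $\widehat{\nabla}^D$ is non-metric by (3.4), this never enters the curvature computation; non-metricity only forbids raising and lowering indices with $\widehat{\nabla}^D$, an operation the argument never uses.
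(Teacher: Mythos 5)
Your proposal is correct and is essentially the paper's own argument: the paper gives no separate proof of this theorem, asserting only that it follows ``similarly to Theorems 2.4, 2.6, 2.8,'' and your detailed mirroring of those proofs --- using $\widehat{B}=B$ from (3.3), the Weingarten operator $\widehat{A}_{\xi}=(A_{\xi}-\omega(\xi))I$ identical to the metric case, and the same torsion identity for the bracket term --- is precisely that analogy carried out. Your bookkeeping of which $\omega$-terms survive each projection (in particular why the $g(Y,Z)\omega(B(X,W))$ and $\omega(U^{D^{\bot}})$ terms of (2.15) are absent from (3.6), and why all tangential $\widehat{A}$-terms drop out of the Codazzi and Ricci equations) correctly supplies the details the paper leaves implicit.
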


We may define $\widehat{K}^D(\Pi)$, $\widehat{\tau}^D$ similarly.
For $X,Y\in\Gamma(TM)$, we let
\begin{equation*}
\alpha_1(X,Y)=({{\nabla}}_X\omega)(Y)-\omega(X)\omega(Y).
\end{equation*}
Let $\lambda_1=\sum_{j=1}^n\alpha_1(E_j,E_j).$
From \cite{OM}, we have
\begin{align}
\widehat{R}(X,Y,Z,W)&={R^L}(X,Y,Z,W)+\alpha_1(X,Z)g(Y,W)-\alpha_1(Y,Z)g(X,W).
\end{align}
Let $M$ be an $m$-dimensional real space form of constant sectional curvature $c$ endowed with a semi-symmetric connection $\widehat{\nabla}$.
By (2.33) and (3.9), we get

\begin{align}
\widehat{R}(X,Y,Z,W)&=c\{g(X,W)g(Y,Z)-g(X,Z)g(Y,W)\}\\
&+\alpha_1(X,Z)g(Y,W)-\alpha_1(Y,Z)g(X,W).\notag
\end{align}
Let
\begin{align}
&{\rm tr}(\alpha_1|_{\Pi})=\alpha_1(E_1,E_1)+\alpha_1(E_2,E_2),~~{\rm tr}(B|_{\Pi})=B(E_1,E_1)+B(E_2,E_2),\\
&\Omega^{\Pi*}=-\frac{1}{2}g(B(E_1,E_2)-B(E_2,E_1),[E_1,E_2]).\notag
\end{align}

\begin{thm}
 Let $TM=D\oplus D^{\bot}$, ${\rm dim}D=n\geq 3$, and let $M$ be a manifold with constant sectional curvature $c$ endowed with a connection
$\widehat{\nabla}$, then
\begin{align}
\widehat{\tau}^D-\widehat{K}^D(\Pi)&\leq \frac{(n+1)(n-2)}{2}c-\frac{n-1}{2}\lambda_1-\frac{n(n-1)}{2}\omega(H)\\
&+\frac{1}{2}{\rm tr}(\alpha_1|_{\Pi})
+\frac{1}{2}\omega({\rm tr}(B|_{\Pi}))
+A^D+\Omega^{\Pi*}
+\frac{n^2(n-2)}{2(n-1)}\|{H}\|^2+\frac{1}{2}||{B}||^2.\notag
\end{align}
\end{thm}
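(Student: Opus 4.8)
The plan is to follow the same template used for the semi-symmetric \emph{metric} case in Theorem 2.10, since the structural ingredients are parallel. First I would record the Gauss equation for $\widehat{\nabla}$ in the contracted form analogous to (2.37). Using Theorem 3.3 together with (3.3) (so that $\widehat{B}=B$), I expect a relation of the shape
\begin{align*}
\widehat{R}(X,Y,Z,W)&=\widehat{R}^D(X,Y,Z,W)+g(B(Y,W),B(X,Z))\\
&\quad-g(B(X,W),B(Y,Z))+g(B(Z,W),[X,Y]),
\end{align*}
where the two $\omega(B(\cdot,\cdot))$ terms in (3.6) are moved to the ambient side via (3.10). Then I would specialize to an orthonormal pair $E_1,E_2$ spanning $\Pi$ and use the space-form expression (3.10) to evaluate $\widehat{R}(E_1,E_2,E_1,E_2)$ and $\widehat{R}(E_1,E_2,E_2,E_1)$ explicitly in terms of $c$, $\alpha_1$, and $\omega(B(\cdot,\cdot))$. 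Combining these with the Gauss equation yields a formula for the sectional curvature $\widehat{K}^D(\Pi)$ of the $2$-plane, exactly paralleling (2.44), but now carrying the extra first-order terms $\tfrac12\operatorname{tr}(\alpha_1|_\Pi)$ and $\tfrac12\omega(\operatorname{tr}(B|_\Pi))$ packaged into $\Omega^{\Pi*}$ and the trace notation from (3.11).

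Next I would compute the scalar curvature $\widehat{\tau}^D$ by summing the Gauss equation over all pairs $E_i,E_j$, as in (2.46). This gives
\begin{align*}
\widehat{\tau}^D=\frac{n(n-1)}{2}c-\frac{n-1}{2}\lambda_1-\frac{n(n-1)}{2}\omega(H)+A^D+\sum_{r=n+1}^{m}\sum_{1\leq i<j\leq n}[h^r_{ii}h^r_{jj}-h^r_{ij}h^r_{ji}],
\end{align*}
where the $-\tfrac{n(n-1)}{2}\omega(H)$ term arises from summing the $\omega(B(E_i,E_i))$ contributions and recognizing $\sum_i B(E_i,E_i)=nH$, and $A^D$ collects the torsion-type terms $g(B(E_j,E_i),[E_j,E_i])$ exactly as in (2.38). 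Subtracting the $\widehat{K}^D(\Pi)$ formula from $\widehat{\tau}^D$ then isolates the difference $\widehat{\tau}^D-\widehat{K}^D(\Pi)$ and produces the linear-curvature part of (3.12): the $\tfrac{(n+1)(n-2)}{2}c$, the first-order terms, $A^D$, and $\Omega^{\Pi*}$.

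The remaining quadratic part in the $h^r_{ij}$ is handled exactly as in the proof of Theorem 2.10. I would apply Lemma 2.4 of \cite{ZZS} to bound $\sum_r[(h^r_{11}+h^r_{22})\sum_{j\geq 3}h^r_{jj}+\sum_{3\leq i<j}h^r_{ii}h^r_{jj}]$ by $\tfrac{n^2(n-2)}{2(n-1)}\|H\|^2$, and use the elementary estimate $-h^r_{ij}h^r_{ji}\leq \tfrac12[(h^r_{ij})^2+(h^r_{ji})^2]$ together with the definition of $\|B\|^2$ to bound the off-diagonal sum by $\tfrac12\|B\|^2$, mirroring (2.48) and (2.49). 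The key bookkeeping difference from the metric case is that here $\widehat{B}=B$ and $\widehat{H}=H$, so no $U^{D^\bot}$ correction appears; the price is the explicit first-order $\omega(H)$, $\operatorname{tr}(\alpha_1|_\Pi)$, and $\omega(\operatorname{tr}(B|_\Pi))$ terms that must be tracked. The main obstacle I anticipate is purely organizational: correctly collecting the scattered first-order contributions from (3.10) into the compact combination $-\tfrac{n-1}{2}\lambda_1-\tfrac{n(n-1)}{2}\omega(H)+\tfrac12\operatorname{tr}(\alpha_1|_\Pi)+\tfrac12\omega(\operatorname{tr}(B|_\Pi))$ appearing in (3.12), since these are the terms that distinguish the non-metric case and are easy to mis-sum. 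Once those are assembled, combining the two quadratic bounds with the linear part gives (3.12).
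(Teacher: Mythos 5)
Your proposal is correct and follows essentially the same route as the paper's proof: evaluate the space-form expression (3.10) against the Gauss equation (3.6) on orthonormal pairs to get $\widehat{K}^D(\Pi)$ and $\widehat{\tau}^D$ (your $\widehat{\tau}^D$ formula, with the $-\tfrac{n-1}{2}\lambda_1$ and $-\tfrac{n(n-1)}{2}\omega(H)$ terms, is exactly the paper's (3.18)), then bound the quadratic part via (2.48) and (2.49). One caution: your displayed Gauss equation with the $\omega(B(\cdot,\cdot))$ terms omitted is false as literally written — those terms can only be regrouped, not dropped — but since your subsequent computations of $\widehat{K}^D(\Pi)$ and $\widehat{\tau}^D$ do track them correctly, the argument is intact.
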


\begin{proof}
Let $\{E_1,\cdots,E_n\}$ and $\{E_{n+1},\cdots,E_{m}\}$ be orthonormal basis of $D$ and $D^\bot$ respectively. Let $E_1,E_2$ be the orthonormal basis of $\Pi\subset D$.
By (3.10), we obtain
\begin{align}
\widehat{R}(E_1,E_2,E_1,E_2)=-c+\alpha_1(E_1,E_1).
\end{align}
By (3.6), we have
\begin{align}
\widehat{R}^D(E_1,E_2,E_1,E_2)=&-c+\alpha_1(E_1,E_1)
+g(B(E_1,E_2),B(E_2,E_1))\\
&-g(B(E_1,E_1),B(E_2,E_2))+\omega(B(E_1,E_1))
-g(B(E_1,E_2),[E_1,E_2]),\notag
\end{align}
Similarly, we have
\begin{align}
\widehat{R}^D(E_1,E_2,E_2,E_1)=&c-\alpha_1(E_2,E_2)
-g(B(E_1,E_2),B(E_2,E_1))\\
&+g(B(E_1,E_1),B(E_2,E_2))-\omega(B(E_2,E_2))
-g(B(E_2,E_1),[E_1,E_2]).\notag
\end{align}
So we obtain
\begin{align}
\widehat{K}^D(\Pi)=&c-\frac{1}{2}{\rm tr}(\alpha_1|_{\Pi})-\frac{1}{2}g({\rm tr}(B|_{\Pi}),U)
-\Omega^{\Pi^*}
+\sum_{r=n+1}^{m}[{h}^r_{11}{h}^r_{22}-{h}^r_{12}{h}^r_{21}].
\end{align}
Similarly to (3.15), we have
\begin{align}
\widehat{R}^D(E_i,E_j,E_j,E_i)=&c-\alpha_1(E_j,E_j)
-g(B(E_i,E_j),B(E_j,E_i))\\
&+g(B(E_i,E_i),B(E_j,E_j))-\omega(B(E_j,E_j))
-g(B(E_j,E_i),[E_i,E_j]).\notag
\end{align}
Then
\begin{align}
\widehat{\tau}^D&=\frac{1}{2}\sum_{1\leq i\neq j\leq n}{\widehat{R}^D}(E_i,E_j,E_j,E_i)\\
&=\frac{n(n-1)}{2}c-\frac{n-1}{2}\lambda-\frac{n(n-1)}{2}\omega(H)+
A^D+\sum_{r=n+1}^{m}\sum_{1\leq i<j\leq n}[{h}^r_{ii}{h}^r_{jj}-{h}^r_{ij}{h}^r_{ji}].\notag
\end{align}
So
\begin{align}
\widehat{\tau}^D-\widehat{K}^D(\Pi)&=\frac{(n+1)(n-2)}{2}c-\frac{n-1}{2}\lambda-\frac{n(n-1)}{2}\omega(H)\\
&
+\frac{1}{2}{\rm tr}(\alpha_1|_{\Pi})+\frac{1}{2}g({\rm tr}(B|_{\Pi}),U)
+A^D+\Omega^{\Pi^*}\notag\\
&~+\sum_{r=n+1}^{m}[\sum_{1\leq i<j\leq n}{h}^r_{ii}{h}^r_{jj}-{h}^r_{11}{h}^r_{22}-\sum_{1\leq i<j\leq n}{h}^r_{ij}{h}^r_{ji}+{h}^r_{12}
{h}^r_{21}].\notag\\
\end{align}
By (2.48) and (2.49), we get (3.12).
\end{proof}

\begin{cor}
 The equality case of (3.12) holds if and only if $D$ is totally geodesic with respect to ${\nabla}$ and ${h}^r_{12}={h}^r_{21}=0.$
\end{cor}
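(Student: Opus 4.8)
The plan is to trace the single inequality used in the proof of Theorem 3.4 back to its source and to characterize when it becomes an equality. The estimate (3.12) was obtained from the final identity (3.21) by invoking the two pointwise estimates (2.48) and (2.49); these apply here because the semi-symmetric non-metric connection satisfies $\widehat{B}=B$ by (3.8), so the same algebraic lemmas (Lemma 2.4 and Lemma 2.5 of \cite{ZZS}) hold verbatim with $\widetilde{h}^r_{ij}$ replaced by $h^r_{ij}$. Since every other step in the proof of Theorem 3.4 was an exact identity, equality in (3.12) holds if and only if equality holds simultaneously in (2.48) and in (2.49).

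Next I would record the two equality conditions, exactly as in the proof of Corollary 2.12 but stripped of tildes. The equality case of (2.49) forces $h^r_{ii}=0$ for $1\leq i\leq n$, together with $h^r_{12}=h^r_{21}=0$, $h^r_{1j}=-h^r_{j1}$ for $3\leq j\leq n$, and $h^r_{kl}=-h^r_{lk}$ for $2\leq k<l\leq n$, for every $r$. The equality case of (2.48) forces $h^r_{11}+h^r_{22}=h^r_{ii}$ for $3\leq i\leq n$.

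Then I would combine the two conditions. Since the (2.49) conditions already give $h^r_{11}=h^r_{22}=0$ and $h^r_{ii}=0$, the (2.48) condition reduces to $0=0$ and imposes nothing new. The surviving requirements are $h^r_{ij}=-h^r_{ji}$ for every pair $(i,j)$ (the diagonal case $i=j$ being the statement $h^r_{ii}=0$), together with the strengthened pair-vanishing $h^r_{12}=h^r_{21}=0$. Recalling that $h^r_{ij}=g(B(E_i,E_j),E_r)$, the antisymmetry $h^r_{ij}=-h^r_{ji}$ for all $i,j,r$ is precisely $B(X,Y)+B(Y,X)=0$, that is, $D$ is totally geodesic with respect to $\nabla$. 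This yields the claimed characterization.

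The argument is essentially a transcription of Corollary 2.12, so no serious obstacle arises; the only point requiring care is checking that the union of the two equality sets collapses cleanly, namely that the (2.48) condition is rendered vacuous by the (2.49) condition, so that the full equality set is exactly ``$B$ antisymmetric'' plus the extra constraint $h^r_{12}=h^r_{21}=0$, with no residual condition left unaccounted for.
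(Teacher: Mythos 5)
Your proof is correct and follows essentially the same route the paper intends: Corollary 3.5 is the Section 3 analogue of Corollary 2.12, and since $\widehat{B}=B$ (equation (3.3), not (3.8) as you cite), equality in (3.12) reduces to simultaneous equality in (2.48) and (2.49) with $h^r_{ij}$ in place of $\widetilde{h}^r_{ij}$, which you correctly collapse to antisymmetry of $B$ (i.e.\ $D$ totally geodesic with respect to $\nabla$) together with $h^r_{12}=h^r_{21}=0$. The only blemishes are reference slips --- the final identity in the proof of Theorem 3.4 is (3.19), not (3.21), which is the Ricci inequality of Theorem 3.6 --- and these do not affect the argument.
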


\begin{thm}
 Let $TM=D\oplus D^{\bot}$, ${\rm dim}D=n\geq 2$, and let $M$ be a manifold with constant sectional curvature $c$ endowed with a connection
$\widehat{\nabla}$, then
\begin{align}
\widehat{{\rm Ric}}^D(X)&\leq (n-1)c-\lambda+\alpha_1(X,X)-n\omega(H)\\
&+\omega(B(X,X))
+\frac{n^2}{4}\|{H}\|^2+\frac{\|{B}^X\|^2}{2}+A^D(X).\notag
\end{align}
\end{thm}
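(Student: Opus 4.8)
The plan is to mirror the proof of Theorem 2.13, feeding in the non-metric ingredients already assembled for Theorem 3.4. The starting point is the pointwise identity (3.17) for $\widehat{R}^D(E_i,E_j,E_j,E_i)$. Choosing an orthonormal frame $\{E_1,\dots,E_n\}$ of $D$ with $E_1=X$, specializing $i=1$, and summing over $j=2,\dots,n$, I would write $\widehat{{\rm Ric}}^D(X)=\sum_{j=2}^n\widehat{R}^D(X,E_j,E_j,X)$ as a sum of a curvature term, an $\alpha_1$-term, a $\omega(B)$-term, a bracket term, and a quadratic-in-$B$ remainder. Because $\widehat{B}=B$ by (3.3), the second fundamental form and mean curvature occurring here are the unmarked $B$ and $H$, which is precisely why the statement carries $\|H\|^2$ and $\|B^X\|^2$ rather than their tilded versions.

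The second step is bookkeeping of the scalar pieces. Writing $h^r_{ij}=g(B(E_i,E_j),E_r)$, I would use $\sum_{j=2}^n\alpha_1(E_j,E_j)=\lambda_1-\alpha_1(X,X)$, producing the $-\lambda_1+\alpha_1(X,X)$ contribution (the $\lambda$ in the statement is $\lambda_1$). For the trace-type term I would use $\omega(H)=\frac1n\sum_{i=1}^n\omega(B(E_i,E_i))$, so that $\sum_{j=2}^n\omega(B(E_j,E_j))=n\omega(H)-\omega(B(X,X))$, giving the $-n\omega(H)+\omega(B(X,X))$ contribution. Finally, the antisymmetry $[E_j,X]=-[X,E_j]$ converts $-\sum_{j=2}^n g(B(E_j,X),[X,E_j])$ into $A^D(X)=\sum_{j=2}^n g(B(E_j,X),[E_j,X])$.

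The last step reuses the two inequalities from Theorem 2.13, now with $\widetilde h,\widetilde H,\widetilde{B}^X$ replaced by $h,H,B^X$. The quadratic remainder is $\sum_{r}\sum_{j=2}^n[h^r_{11}h^r_{jj}-h^r_{1j}h^r_{j1}]$. Lemma 2.5 of \cite{ZZS} gives $\sum_r\sum_{j=2}^n h^r_{11}h^r_{jj}\le\frac{n^2}{4}\|H\|^2$, while the elementary bound $-h^r_{1j}h^r_{j1}\le\frac12[(h^r_{1j})^2+(h^r_{j1})^2]$, summed over $r$ and $j$, yields $-\sum_r\sum_{j=2}^n h^r_{1j}h^r_{j1}\le\frac12\|B^X\|^2$, exactly as in (2.55). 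Combining the scalar terms with these two estimates gives (3.20).

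I do not expect a genuine analytic obstacle, since the two \cite{ZZS} lemmas carry the optimization and the structure is identical to the metric case. The only real care lies in the sign- and index-bookkeeping of the non-metric correction terms: correctly splitting off the $X$-diagonal entry when passing from $\lambda_1$ and $n\omega(H)$ to the residual sums, and tracking the sign in the bracket term through the antisymmetry of $[\cdot,\cdot]$, so that the leftover pieces assemble cleanly into $\alpha_1(X,X)$, $\omega(B(X,X))$, and $A^D(X)$.
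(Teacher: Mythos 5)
Your proposal is correct and follows essentially the same route as the paper: the paper's proof also sums (3.17) over $j=2,\dots,n$ with $E_1=X$ (yielding its display (3.22), with the same $-\lambda_1+\alpha_1(X,X)$, $-n\omega(H)+\omega(B(X,X))$, and $A^D(X)$ bookkeeping, all in terms of the unmarked $B$, $H$ since $\widehat{B}=B$), and then invokes exactly the two estimates (2.54) and (2.55) to bound the quadratic remainder. Your observation that the paper's $\lambda$ here denotes $\lambda_1$ is also the right reading of the statement.
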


\begin{proof}
By (3.17), we have

\begin{align}
\widehat{{\rm Ric}}^D(X)&\leq (n-1)c-\lambda+\alpha_1(X,X)-n\omega(H)\\
&+\omega(B(X,X))
+A^D(X)
+\sum_{r=n+1}^{m}\sum_{j=2}^n[{h}^r_{11}{h}^r_{jj}-{h}^r_{1j}{h}^r_{j1}].\notag
\end{align}
By (2.54) and (2.55), we get (3.21).
\end{proof}

\begin{cor}
 The equality case of (3.21) holds if and only if ${h}^r_{1j}=-{h}^r_{j1}$ for $2\leq j\leq n$ and ${h}^r_{11}-{h}^r_{22}-\cdots-{h}^r_{nn}=0.$
\end{cor}

\vskip 1 true cm

\section{ Non integrable distributions with a statistical connection}

Let $(M,g,\overline{\nabla})$ denote a statistical manifold and $(M,g,\overline{\nabla}^*)$ denote a dual statistical manifold. Then
\begin{align}
\overline{\nabla}_XY=\nabla_XY+K(X,Y);~~~\overline{\nabla}^*_XY=\nabla_XY-K(X,Y),
\end{align}
where $K:\Gamma(TM)\times \Gamma(TM)\rightarrow \Gamma(TM)$ is a tensor field. Let $C(X,Y,Z)=g(K(X,Y),Z)$, then $C(X,Y,Z)$ is a symmetric tensor. Let
\begin{align}
\overline{\nabla}_XY=\overline{\nabla}^D_XY+\overline{B}(X,Y);~~~\overline{\nabla}^*_XY=\overline\nabla^{D,*}_XY+\overline{B}^*(X,Y),
\end{align}
where
\begin{align}
&\overline{\nabla}^D_XY={\nabla}^D_XY+\pi^DK(X,Y);~~~
\overline{\nabla}^{D,*}_XY={\nabla}^D_XY-\pi^DK(X,Y).\\
&\overline{B}(X,Y)=B(X,Y)+\pi^{D^{\bot}}K(X,Y);~~~
\overline{B}^*(X,Y)=B(X,Y)-\pi^{D^{\bot}}K(X,Y)\notag
\end{align}
Let $\overline{A}_{\xi},~~\overline{A}^*_{\xi}:\Gamma(D)\rightarrow\Gamma(D)$ defined by
\begin{align}
g(\overline{A}_{\xi}X,Y)=g(\overline{B}(X,Y),\xi);~~~g(\overline{A}^*_{\xi}X,Y)=g(\overline{B}^*(X,Y),\xi),
\end{align}
where $\overline{A}_{\xi}$, $\overline{A}^*_{\xi}$ are called shape operators on $D$ with respect to $\overline{\nabla},$ $\overline{\nabla}^*$ respectively.
Then $\overline{A}_{\xi}X=-\pi^D\overline{\nabla}^*_X\xi$, $\overline{A}^*_{\xi}X=-\pi^D\overline{\nabla}_X\xi$
Let
\begin{align}
\overline{\nabla}_X\xi=-\overline{A}^*_{\xi}X+\overline{L}^{\bot}_X\xi;~~\overline{\nabla}^*_X\xi=-\overline{A}_{\xi}X+\overline{L}^{\bot,*}_X\xi,
\end{align}
which we called the Weingarten formulas. $\overline{L}^{\bot}_X\xi,~~\overline{L}^{\bot,*}_X\xi:\Gamma(D)\times \Gamma(D^{\bot})\rightarrow \Gamma(D^{\bot})$ are dual connections on $D^{\bot}$ along $\Gamma(D)$.
Similarly to (2.12) and (2.13), we may define $\overline{R},\overline{R}^D,\overline{R}^*,\overline{R}^{D,*}$.

\begin{thm} Given $X,Y,Z,W\in\Gamma(D)$, we have
\begin{align}
\overline{R}(X,Y,Z,W)&=\overline{R}^D(X,Y,Z,W)+g(\overline{B}^*(Y,W),\overline{B}(X,Z))\\
&-g(\overline{B}^*(X,W),\overline{B}(Y,Z))
+g(\overline{B}^*(Z,W),[X,Y]).\notag
\end{align}
\end{thm}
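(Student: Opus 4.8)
The plan is to mimic the derivation of the Gauss equation for the semi-symmetric metric connection (Theorem 2.4), but now keeping careful track of the fact that the statistical setting involves the \emph{dual pair} $(\overline{\nabla},\overline{\nabla}^*)$ rather than a single connection. The key structural input is that the shape operators and the Weingarten formulas intertwine the two connections: from (4.5)--(4.6) we have $\overline{\nabla}_X\xi=-\overline{A}^*_{\xi}X+\overline{L}^{\bot}_X\xi$, so that the $\overline{A}$-terms produced when one differentiates $\overline{B}(Y,Z)$ via $\overline{\nabla}$ come out as $\overline{A}^*$-terms, which is exactly what is needed to reproduce the asymmetric pairings $g(\overline{B}^*(\cdot,\cdot),\overline{B}(\cdot,\cdot))$ appearing in (4.7).

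\textbf{First} I would compute $\overline{\nabla}_X\overline{\nabla}_YZ$ using the decomposition (4.3): writing $\overline{\nabla}_YZ=\overline{\nabla}^D_YZ+\overline{B}(Y,Z)$ and applying $\overline{\nabla}_X$ to each piece, the tangential part $\overline{\nabla}^D_YZ\in\Gamma(D)$ splits again by (4.3) into $\overline{\nabla}^D_X\overline{\nabla}^D_YZ+\overline{B}(X,\overline{\nabla}^D_YZ)$, while the normal part $\overline{B}(Y,Z)\in\Gamma(D^{\bot})$ is differentiated by the Weingarten formula (4.6), giving $-\overline{A}^*_{\overline{B}(Y,Z)}X+\overline{L}^{\bot}_X(\overline{B}(Y,Z))$. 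I would record the analogous expression for $\overline{\nabla}_Y\overline{\nabla}_XZ$ by interchanging $X$ and $Y$, and then expand $\overline{\nabla}_{[X,Y]}Z$ by splitting $[X,Y]=[X,Y]^D+[X,Y]^{D^{\bot}}$, treating the $D^{\bot}$-piece with the same care as in (2.19)--(2.20) so that the curvature tensor $\overline{R}^D$ is the genuinely tensorial object defined in the analogue of (2.13). Subtracting these three expressions according to the definition (2.12) of $\overline{R}(X,Y)Z$ assembles the statistical analogue of (2.21).

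\textbf{Then} I would take the $g(\cdot,W)$ inner product with $W\in\Gamma(D)$ and isolate the tangential contributions. The $\overline{R}^D$ term survives directly. The two shape-operator terms become, via the defining relation (4.4) for $\overline{A}^*$, namely $g(\overline{A}^*_{\xi}X,Y)=g(\overline{B}^*(X,Y),\xi)$, the pairings $-g(\overline{A}^*_{\overline{B}(Y,Z)}X,W)=-g(\overline{B}^*(X,W),\overline{B}(Y,Z))$ and $+g(\overline{A}^*_{\overline{B}(X,Z)}Y,W)=+g(\overline{B}^*(Y,W),\overline{B}(X,Z))$, which are precisely the first two correction terms in (4.7); note it is the star on $\overline{A}$ that forces the star onto the $\overline{B}$ carrying $W$, explaining the dual pattern. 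The $\overline{L}^{\bot}$-terms and the $\overline{B}(X,\overline{\nabla}^D_YZ)$-type terms are normal (lie in $D^{\bot}$) and so drop out against $W\in\Gamma(D)$, while the leftover term coming from the $[X,Y]^{D^{\bot}}$-piece supplies $g(\overline{B}^*(Z,W),[X,Y])$ — here I would check that the bracket contribution is paired with $\overline{B}^*$ and not $\overline{B}$, consistent with (4.7).

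\textbf{The main obstacle} I anticipate is bookkeeping the dual structure correctly: one must verify that the Codazzi-type normal terms genuinely cancel and, more delicately, that the single $[X,Y]$-term is the unstarred bracket paired against $\overline{B}^*$ rather than some symmetrization of $\overline{B}$ and $\overline{B}^*$. This hinges on the duality identity $g(\overline{B}^*(Z,W),\xi)+g(\overline{B}(Z,W),\xi)=2g(B(Z,W),\xi)$ together with the symmetry $C(X,Y,Z)=g(K(X,Y),Z)$ of the statistical cubic form, which controls how $\pi^{D^{\bot}}K$ enters the $[X,Y]^{D^{\bot}}$ contribution; I would use this symmetry to convert any residual $\overline{B}$-versus-$\overline{B}^*$ asymmetry into the clean form displayed in (4.7), after which the identity follows as in Theorem 2.4.
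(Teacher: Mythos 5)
Your proposal is correct and follows essentially the same route as the paper's proof: expand $\overline{\nabla}_X\overline{\nabla}_YZ$, $\overline{\nabla}_Y\overline{\nabla}_XZ$ and $\overline{\nabla}_{[X,Y]}Z$ via the decomposition $\overline{\nabla}_XY=\overline{\nabla}^D_XY+\overline{B}(X,Y)$ and the dual Weingarten formula $\overline{\nabla}_X\xi=-\overline{A}^*_{\xi}X+\overline{L}^{\bot}_X\xi$, assemble the statistical analogue of (2.21), and pair with $W$ using $g(\overline{A}^*_{\xi}X,Y)=g(\overline{B}^*(X,Y),\xi)$ so that the tangential $\overline{A}^*$-terms produce exactly the mixed $\overline{B}^*$-versus-$\overline{B}$ pairings and the bracket term, while the normal terms drop out. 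The only superfluous element is your anticipated appeal to the identity $\overline{B}+\overline{B}^*=2B$ and the symmetry of $C$: the pairing of $[X,Y]$ with $\overline{B}^*$ comes for free from the Weingarten formula applied to $[X,Y]^{D^{\bot}}$, with no residual asymmetry to repair.
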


\begin{proof}
From Equations (4.2) and (4.5), we have for $X,Y,Z\in\Gamma(D)$
\begin{align}
\overline{\nabla}_X\overline{\nabla}_YZ&=\overline{\nabla}^D_X\overline{\nabla}^D_YZ+\overline{B}(X,\overline{\nabla}^D_YZ)\\
&-\overline{A}^*_{\overline{B}(Y,Z)}X+\overline{L}^{\bot}_X(\overline{B}(Y,Z))\notag,
\end{align}
\begin{align}
\overline{\nabla}_Y\overline{\nabla}_XZ&=\overline{\nabla}^D_Y\overline{\nabla}^D_XZ+\overline{B}(Y,\overline{\nabla}^D_XZ)\\
&-\overline{A}^*_{\overline{B}(X,Z)}Y+\overline{L}^{\bot}_Y(\overline{B}(X,Z))\notag,
\end{align}
\begin{align}
\overline{\nabla}_{[X,Y]}Z&=\overline{\nabla}^D_{[X,Y]^{D}}Z+\overline{B}([X,Y]^{D},Z)
-\overline{A}^*_{[X,Y]^{D^{\bot}}}Z\\
&+\overline{L}^{\bot}_Z([X,Y]^{D^{\bot}})+[{[X,Y]^{D^{\bot}}},Z].\notag
\end{align}
So we get
\begin{align}
\overline{R}(X,Y)Z=&\overline{R}^D(X,Y)Z-\pi^{D^{\bot}}[{[X,Y]^{D^{\bot}}},Z]+\overline{B}(X,\overline{\nabla}^D_YZ)\\
&-\overline{B}(Y,\overline{\nabla}^D_XZ)
-\overline{B}([X,Y]^{D},Z)-\overline{A}^*_{\overline{B}(Y,Z)}X\notag\\
&+\overline{A}^*_{\overline{B}(X,Z)}Y
+\overline{L}^{\bot}_X(\overline{B}(Y,Z))-\overline{L}^{\bot}_Y(\overline{B}(X,Z))\notag\\
&+\overline{A}^*_{[X,Y]^{D^{\bot}}}Z
-\overline{L}^{\bot}_Z([X,Y]^{D^{\bot}}).\notag
\end{align}
By (4.4) and (4.10), we get (4.6).
\end{proof}
By (4.10), we have
\begin{thm} Given $X,Y,Z\in\Gamma(D)$, the following equality holds:
\begin{align}
[\overline{R}(X,Y)Z]^{D^{\bot}}=&-\pi^{D^{\bot}}[{[X,Y]^{D^{\bot}}},Z]+\overline{B}(X,\overline{\nabla}^D_YZ)\\
&-\overline{B}(Y,\overline{\nabla}^D_XZ)
-\overline{B}([X,Y]^{D},Z)\notag\\
&+\overline{L}^{\bot}_X(\overline{B}(Y,Z))-\overline{L}^{\bot}_Y(\overline{B}(X,Z))
-\overline{L}^{\bot}_Z([X,Y]^{D^{\bot}}).\notag
\end{align}
\end{thm}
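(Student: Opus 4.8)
The plan is to obtain (4.11) simply by applying the projection $\pi^{D^{\bot}}$ to the already-derived decomposition (4.10) of $\overline{R}(X,Y)Z$, and then discarding every term that takes values in $\Gamma(D)$. Thus the entire argument reduces to a careful bookkeeping of which of the eleven terms on the right-hand side of (4.10) are tangential (lie in $\Gamma(D)$) and which are normal (lie in $\Gamma(D^{\bot})$).

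First I would isolate the tangential terms, which are annihilated by $\pi^{D^{\bot}}$. The leading term $\overline{R}^D(X,Y)Z$ is the curvature of the induced connection $\overline{\nabla}^D$ on $D$ (defined analogously to (2.13)), hence lies in $\Gamma(D)$. The three shape-operator terms $-\overline{A}^*_{\overline{B}(Y,Z)}X$, $\overline{A}^*_{\overline{B}(X,Z)}Y$ and $\overline{A}^*_{[X,Y]^{D^{\bot}}}Z$ also lie in $\Gamma(D)$: by the definition (4.4) together with the Weingarten formula (4.5), each shape operator $\overline{A}^*_{\xi}$ maps $\Gamma(D)$ into $\Gamma(D)$. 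These four terms therefore vanish upon projection.

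Next I would check that the seven remaining terms already lie in $\Gamma(D^{\bot})$, so that $\pi^{D^{\bot}}$ acts as the identity on them. The term $-\pi^{D^{\bot}}[[X,Y]^{D^{\bot}},Z]$ is manifestly normal. The three second-fundamental-form terms $\overline{B}(X,\overline{\nabla}^D_YZ)$, $-\overline{B}(Y,\overline{\nabla}^D_XZ)$ and $-\overline{B}([X,Y]^D,Z)$ are normal by the definition (4.3) of $\overline{B}$, and the three connection terms $\overline{L}^{\bot}_X(\overline{B}(Y,Z))$, $-\overline{L}^{\bot}_Y(\overline{B}(X,Z))$ and $-\overline{L}^{\bot}_Z([X,Y]^{D^{\bot}})$ are normal because $\overline{L}^{\bot}$ is a connection on $D^{\bot}$ by (4.5). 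Collecting these seven surviving terms yields precisely the right-hand side of (4.11).

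There is no serious obstacle here: unlike the Codazzi equation (2.23) in the semi-symmetric metric case, the statement (4.11) is left in its raw projected form and is not rewritten using the covariant-derivative notation $(\overline{L}^{\bot}_X\overline{B})(Y,Z)$, so no use of a torsion-type identity such as $[X,Y]^D=\overline{\nabla}^D_XY-\overline{\nabla}^D_YX+\cdots$ is required. The only point demanding care is the verification that $\overline{A}^*_{\xi}$ is $D$-valued, which is exactly what (4.4)--(4.5) guarantee; once that is in hand, the identity follows by inspection.
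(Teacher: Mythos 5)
Your proposal is correct and is exactly the paper's own argument: the paper derives Theorem 4.2 by the single remark ``By (4.10), we have,'' i.e.\ by projecting the decomposition (4.10) onto $D^{\bot}$, annihilating the four tangential terms $\overline{R}^D(X,Y)Z$, $-\overline{A}^*_{\overline{B}(Y,Z)}X$, $\overline{A}^*_{\overline{B}(X,Z)}Y$, $\overline{A}^*_{[X,Y]^{D^{\bot}}}Z$ and retaining the seven normal ones, just as you describe. Your bookkeeping of which terms lie in $\Gamma(D)$ versus $\Gamma(D^{\bot})$ is accurate, so nothing is missing.
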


From (4.2), (4.4) and (4.5), we get
\begin{thm}
Given $X,Y\in\Gamma(D)$, $\xi,\eta\in\Gamma(D^{\bot})$, we have
\begin{align}
g(\overline{R}(X,Y)\xi,\eta)=g(\overline{A}_{\eta}Y, \overline{A}^*_{\xi}X))
-g(\overline{A}_{\eta}X, \overline{A}^*_{\xi}Y))
+g(\overline{R}^{L^{\bot}}(X,Y)\xi,\eta),
\end{align}
where
\begin{align}
\overline{R}^{L^{\bot}}(X,Y)\xi:=\overline{L}^{\bot}_X\overline{L}^{\bot}_Y\xi-\overline{L}^{\bot}_Y\overline{L}^{\bot}_X\xi
-\overline{L}^{\bot}_{[X,Y]^D}\xi-\pi^{D^{\bot}}\overline{\nabla}_{[X,Y]^{\bot}}\xi.
\end{align}
\end{thm}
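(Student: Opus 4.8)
The plan is to imitate the proof of Theorem 2.8, computing the three constituents of $\overline{R}(X,Y)\xi=\overline{\nabla}_X\overline{\nabla}_Y\xi-\overline{\nabla}_Y\overline{\nabla}_X\xi-\overline{\nabla}_{[X,Y]}\xi$ separately and then extracting the $D^{\bot}$-component. First I would differentiate $\xi$ once by the Weingarten formula (4.5), $\overline{\nabla}_Y\xi=-\overline{A}^*_{\xi}Y+\overline{L}^{\bot}_Y\xi$, and then apply $\overline{\nabla}_X$: the $D$-valued summand $-\overline{A}^*_{\xi}Y$ is differentiated by the Gauss formula (4.2), producing $-\overline{\nabla}^D_X(\overline{A}^*_{\xi}Y)-\overline{B}(X,\overline{A}^*_{\xi}Y)$, while the $D^{\bot}$-valued summand $\overline{L}^{\bot}_Y\xi$ is differentiated again by (4.5), producing $-\overline{A}^*_{\overline{L}^{\bot}_Y\xi}X+\overline{L}^{\bot}_X\overline{L}^{\bot}_Y\xi$. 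This parallels (2.29), and interchanging $X$ and $Y$ gives $\overline{\nabla}_Y\overline{\nabla}_X\xi$.

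Next I would expand the bracket term by splitting $[X,Y]=[X,Y]^D+[X,Y]^{D^{\bot}}$: on the $D$-part I apply (4.5), and on the $D^{\bot}$-part I write $\overline{\nabla}_{[X,Y]^{D^{\bot}}}\xi=\pi^D\overline{\nabla}_{[X,Y]^{D^{\bot}}}\xi+\pi^{D^{\bot}}\overline{\nabla}_{[X,Y]^{D^{\bot}}}\xi$, exactly as in (2.31). Substituting the three expansions into the definition of $\overline{R}(X,Y)\xi$ and projecting onto $D^{\bot}$, every $D$-valued term (the $\overline{\nabla}^D$-terms, the $\overline{A}^*$-terms, and $\pi^D\overline{\nabla}_{[X,Y]^{D^{\bot}}}\xi$) drops out, since $\overline{\nabla}^D$ and $\overline{A}^*$ take values in $D$. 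What survives is $-\overline{B}(X,\overline{A}^*_{\xi}Y)+\overline{B}(Y,\overline{A}^*_{\xi}X)$ together with the four $\overline{L}^{\bot}$-terms, and by the definition (4.13) the latter assemble precisely into $\overline{R}^{L^{\bot}}(X,Y)\xi$.

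Finally I would pair with $\eta\in\Gamma(D^{\bot})$ and convert the two second-fundamental-form terms via (4.4), using $g(\overline{B}(X,\overline{A}^*_{\xi}Y),\eta)=g(\overline{A}_{\eta}X,\overline{A}^*_{\xi}Y)$ and the analogous identity with $X,Y$ interchanged. This immediately yields (4.12) (the spurious extra closing parentheses in the statement being mere typographical slips).

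The one point demanding care is the dual bookkeeping. The Weingarten formula for $\overline{\nabla}$ feeds in the \emph{dual} shape operator $\overline{A}^*$, whereas the identity (4.4) that I invoke at the very end converts $\overline{B}$ into the \emph{non-dual} operator $\overline{A}$; keeping track of which of the starred or unstarred operators appears at each stage is exactly what produces the asymmetric mixed pairing $g(\overline{A}_{\eta}Y,\overline{A}^*_{\xi}X)-g(\overline{A}_{\eta}X,\overline{A}^*_{\xi}Y)$ in the final formula. Thus the main obstacle is not the computation itself, which is routine, but maintaining the duality consistently throughout.
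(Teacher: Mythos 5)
Your proposal is correct and is essentially the paper's own (implicit) argument: the paper gives no separate computation for this theorem, merely citing (4.2), (4.4) and (4.5), and the intended proof is exactly the transcription of the proof of Theorem 2.8 (the analogues of (2.28)--(2.30), projection onto $D^{\bot}$, then pairing with $\eta$) that you carry out. Your attention to the duality bookkeeping --- $\overline{A}^*$ entering through the Weingarten formula for $\overline{\nabla}$ while (4.4) converts $\overline{B}$ into $\overline{A}_{\eta}$ --- is precisely what produces the mixed pairing in (4.12), so nothing is missing.
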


\vskip 1 true cm

\section{Examples}
\noindent{\bf Example 1}\\
 \indent Consider the $3$-dimensional unit sphere $\mathbb{S}^3$ as a Riemannian manifold endowed with the metric induced from $\mathbb{R}^4$. The tangent space of  $\mathbb{S}^3$
at each point has an orthonormal basis given by the vector fields
\begin{align}
X_1=x_2\partial_{x_1}-x_1\partial_{x_2}-x_4\partial_{x_3}+x_3\partial_{x_4}\notag\\
X_2=x_4\partial_{x_1}-x_3\partial_{x_2}+x_2\partial_{x_3}-x_1\partial_{x_4}\notag\\
X_3=x_3\partial_{x_1}+x_4\partial_{x_2}-x_1\partial_{x_3}-x_2\partial_{x_4}.\notag
\end{align}
Then
\begin{align}
[X_1,X_2]=2X_3,~~~[X_1,X_3]=-2X_2,~~~[X_2,X_3]=2X_1.
\end{align}
Let $\nabla$ be the Levi-Civita connection on $\mathbb{S}^3$. By (5.1) and the Koszul formula, we have
\begin{align}
&\nabla_{X_1}X_2=X_3,~~~\nabla_{X_2}X_1=-X_3,,~~~\nabla_{X_1}X_1=\nabla_{X_2}X_2=\nabla_{X_3}X_3=0
,\\
&\nabla_{X_1}X_3=-X_2,~~~ \nabla_{X_3}X_1=X_2,~~~ \nabla_{X_2}X_3=X_1,~~~ \nabla_{X_3}X_2=-X_1.\notag
\end{align}
Consider the distribution $D_1=span\{X_1,X_2\}$, which is not integrable by (5.1). The metric of $D_1$ is induced by the metric on $\mathbb{S}^3$. Let $U=X_1+X_3$.
By (5.2), we have
\begin{align}
\nabla^{D_1}_{X_i}X_j=0,~~~\forall i,j=1,2,~~B(X_1,X_1)=B(X_2,X_2)=0,\\
~~B(X_1,X_2)=X_3,~~ B(X_2,X_1)=-X_3.\notag
\end{align}
So $D_1$ is totally geodesic distribution with respect to $\nabla$.
By (2.6), we obtain
\begin{equation}
\widetilde{\nabla}^{D_1}_{X}Y={\nabla}^{D_1}_{X}Y+g(X_1,Y)X-g(X,Y)X_1,~~\widetilde{B}(X,Y)={B}(X,Y)-g(X,Y)X_3.
\end{equation}
So
\begin{align}
&\widetilde{\nabla}^{D_1}_{X_1}X_1=0,~~~\widetilde{\nabla}^{D_1}_{X_1}X_2=0,~~~ \widetilde{\nabla}^{D_1}_{X_2}X_1=X_1,~~~\widetilde{\nabla}^{D_1}_{X_2}X_2=-X_1\\
&\widetilde{B}(X_1,X_1)=-X_3,~~~ \widetilde{B}(X_1,X_2)=X_3,~~~\notag\\
& \widetilde{B}(X_2,X_1)=-X_3,~~~ \widetilde{B}(X_2,X_2)=-X_3,~~~\widetilde{H}=-X_3.\notag
\end{align}
By (2.8) and (2.11), we have
\begin{align}
&A_{X_3}X_1=X_2,~~~ A_{X_3}X_2=-X_1,~~~ \widetilde{A}_{X_3}X_1=X_2-X_1,\\
& \widetilde{A}_{X_3}X_2=-X_1-X_2,~~~ L^{\bot}_{X_1}X_3=L^{\bot}_{X_2}X_3=0.\notag
\end{align}
By (2.13),(2.35),(2.36) and (5.5), we have
\begin{align}
\widetilde{R}^{D^1}(X_1,X_2)X_1=-4X_2,~~~ \widetilde{R}^{D^1}(X_1,X_2)X_2=4X_1,~~~  \widetilde{K}^{D^1}(D_1)=4,~~~ \widetilde{\tau}^{D_1}=4.
\end{align}
By (3.2), we have
\begin{equation}
\widehat{\nabla}^{D_1}_{X}Y={\nabla}^{D_1}_{X}Y+g(X_1,Y)X,~~\widehat{B}(X,Y)={B}(X,Y).
\end{equation}
So
\begin{align}
&\widehat{\nabla}^{D_1}_{X_1}X_1=X_1,~~~\widehat{\nabla}^{D_1}_{X_1}X_2=0,~~~\widehat{\nabla}^{D_1}_{X_2}X_1=X_2,~~~\widehat{\nabla}^{D_1}_{X_2}X_2=0\\
&\widehat{B}(X_1,X_1)=0,~~~ \widehat{B}(X_1,X_2)=X_3,~~~\notag\\
&\widehat{B}(X_2,X_1)=-X_3,~~~ \widehat{B}(X_2,X_2)=0,\notag\\
&\widehat{R}^{D^1}(X_1,X_2)X_1=-5X_2,~~~ \widehat{R}^{D^1}(X_1,X_2)X_2=4X_1.\notag
\end{align}
We consider the distribution $D^2=span\{X_1,X_3\}$ and $U=X_2+X_3$, then similarly we get
\begin{align}
&\widetilde{\nabla}^{D_2}_{X_1}X_1=-X_3,~~~\widetilde{\nabla}^{D_2}_{X_1}X_3=X_1,~~~ \widetilde{\nabla}^{D_2}_{X_3}X_1=0,~~~\widetilde{\nabla}^{D_2}_{X_3}X_3=0.\\
&\widetilde{R}^{D^2}(X_1,X_3)X_1=4X_3,~~~ \widetilde{R}^{D^2}(X_1,X_3)X_3=4X_1,~~~  \widetilde{K}^{D^2}(D_2)=0,~~~ \widetilde{\tau}^{D_2}=0.\notag
\end{align}
Similarly, we obtain
\begin{align}
&\widehat{\nabla}^{D_2}_{X_1}X_1=0,~~~\widehat{\nabla}^{D_2}_{X_1}X_3=X_1,~~~ \widehat{\nabla}^{D_2}_{X_3}X_1=0,~~~\widehat{\nabla}^{D_2}_{X_3}X_3=X_3.\\
&\widehat{R}^{D^2}(X_1,X_3)X_1=4X_3,~~~ \widehat{R}^{D^2}(X_1,X_3)X_3=5X_1,~~~  \widehat{K}^{D^2}(D_2)=\frac{1}{2},~~~ \widehat{\tau}^{D_2}=\frac{1}{2}.\notag
\end{align}
\vskip 1 true cm
\noindent {\bf Example 2}\\
\indent Let $M=\mathbb{R}\times \mathbb{S}^3$ and $D^1=span\{X_1,X_2\}$ and $T\mathbb{S}^3=D^1\oplus D^{1,\bot}$. Let $f(t)\in C^{\infty}(\mathbb{R})$ without zero points. Let $\pi_1:
\mathbb{R}\times \mathbb{S}^3\rightarrow \mathbb{R};(t,x)\rightarrow t$ and $\pi_2:
\mathbb{R}\times \mathbb{S}^3\rightarrow \mathbb{S}^3;(t,x)\rightarrow x$. Let
\begin{align}
&g^M_f=\pi^*_1dt^2\oplus f^2\pi^*_2g^{D^1}\oplus \pi^*_2 g^{D^{1,\bot}};\\
&D=\pi^*_1(T\mathbb{R})\oplus \pi^*_2D^1;~~~g^D=\pi^*_1dt^2\oplus f^2\pi^*_2g^{D^1}.\notag
\end{align}
We call $(D,g^D)$ the warped product distribution on $M$. Let $\nabla^f$ be the Levi-Civita connection on $(M,g^M_f)$ and $\partial_t=\frac{\partial}{\partial t}$ and
$\partial_t(f)=f'$, then by the Koszul formula and (5.12), we get
\begin{align}
&\nabla^f_{\partial_t}\partial_t=0,~~~ \nabla^f_{\partial_t}X_1=\frac{f'}{f}X_1,~~~ \nabla^f_{X_1}\partial_t=\frac{f'}{f}X_1,~~~
\nabla^f_{\partial_t}X_2=\frac{f'}{f}X_2,\\
&~~~ \nabla^f_{X_2}\partial_t=\frac{f'}{f}X_2,~~~ \nabla^f_{\partial_t}X_3=\nabla^f_{X_3}\partial_t=0,~~~
\nabla^f_{X_1}X_1=\nabla^f_{X_2}X_2=-ff'\partial_t,\notag\\
&~~~ \nabla^f_{X_1}X_2=X_3,~~~ \nabla^f_{X_2}X_1=-X_3,
~~~ \nabla^f_{X_1}X_3=-\frac{X_2}{f^2},~~~ \nabla^f_{X_3}X_1=(2-\frac{1}{f^2})X_2,\notag\\
&~~~
\nabla^f_{X_2}X_3=\frac{X_1}{f^2},~~~ \nabla^f_{X_3}X_2=(\frac{1}{f^2}-2)X_1,~~~ \nabla^f_{X_3}X_3=0.\notag
\end{align}
So we obtain
\begin{align}
&R^f(\partial_t,X_1)\partial_t=\frac{f''}{f}X_1,~~~ R^f(\partial_t,X_2)\partial_t=\frac{f''}{f}X_2,~~~ R^f(\partial_t,X_3)\partial_t=0,\\
&~~~
R^f(\partial_t,X_1)X_1=-ff''\partial_t,~~~ R^f(\partial_t,X_1)X_2=-\frac{f'}{f}X_3,~~~ R^f(\partial_t,X_1)X_3=f^{-3}f'X_2,\notag\\
&~~~
 R^f(\partial_t,X_2)X_1=\frac{f'}{f}X_3,~~~
R^f(\partial_t,X_2)X_2=-ff''\partial_t, R^f(\partial_t,X_2)X_3=-f^{-3}f'X_1, \notag\\
&R^f(\partial_t,X_3)X_1=2f^{-3}f'X_2,~~~ R^f(\partial_t,X_3)X_2=-2f^{-3}f'X_1,~~~
R^f(\partial_t,X_3)X_3=0,\notag\\
&~~~ R^f(X_1,X_2)\partial_t=\frac{2f'}{f}X_3,~~~ R^f(X_1,X_3)\partial_t=f'f^{-3}X_2,~~~ R^f(X_2,X_3)\partial_t=-f^{-3}f'X_1,~~~\notag\\
&
R^f(X_1,X_2)X_1=[(f')^2+3f^{-2}-4]X_2,~~~ R^f(X_1,X_2)X_2=[-(f')^2-3f^{-2}+4]X_1,\notag\\
&~~~ R^f(X_1,X_2)X_3=-2f^{-1}f'\partial_t,~~~
R^f(X_1,X_3)X_1=-f^{-2}X_3,~~~ R^f(X_1,X_3)X_2=-f^{-1}f'\partial_t,\notag\\
&~~~ R^f(X_1,X_3)X_3=f^{-4}X_1,~~~ R^f(X_2,X_3)X_1=f^{-1}f'\partial_t,~~~\notag\\
&
R^f(X_2,X_3)X_2=-f^{-2}X_3,~~~ R^f(X_2,X_3)X_3=f^{-4}X_2.\notag
\end{align}
By $R^f(X_2,X_3)X_3=f^{-4}X_2$, then $(M,g^M_f)$ is not flat. Let
${\rm Ric}^f(X,Y)=\sum_{k=1}^4g(R^f(X,E_k)Y,E_k)$ where $\{E_k\}$ are the orthonormal basis of $(M,g^M_f)$. Then we have
\begin{align}
&{\rm Ric}^f(\partial_t,\partial_t)=\frac{2f''}{f}, {\rm Ric}^f(X_1,X_1)={\rm Ric}^f(X_2,X_2)=ff''+(f')^2+2f^{-2}-4,\\
 &{\rm Ric}^f(X_3,X_3)=-2f^{-4},
{\rm Ric}^f(\partial_t,X_1)={\rm Ric}^f(\partial_t,X_2)={\rm Ric}^f(\partial_t,X_3)=0,\notag\\
&{\rm Ric}^f(X_1,X_2)={\rm Ric}^f(X_1,X_3)={\rm Ric}^f(X_2,X_3)=0.\notag
\end{align}
Then it is easy to get that $(M,g^M_f)$ is not Einstein. Let $s^f=\sum_k{\rm Ric}^f(E_k,E_k)$, then
\begin{align}
s^f=4f^{-1}f''+2f^{-2}(f')^2-8f^{-2}-2f^{-4}+4.
\end{align}
Let $D=span\{\partial_t,X_1,X_2\}$, by (5.13), we have
\begin{align}
&\nabla^D_{\partial_t}\partial_t=0,~~~ \nabla^D_{\partial_t}X_1=\frac{f'}{f}X_1,~~~ \nabla^D_{X_1}\partial_t=\frac{f'}{f}X_1,~~~
\nabla^D_{\partial_t}X_2=\frac{f'}{f}X_2,\\
&~~~ \nabla^D_{X_2}\partial_t=\frac{f'}{f}X_2,~~~
\nabla^D_{X_1}X_1=\nabla^D_{X_2}X_2=-ff'\partial_t,\notag\\
&~~~ \nabla^D_{X_1}X_2=0,~~~ \nabla^D_{X_2}X_1=0,\notag
\end{align}
and
\begin{align}
&B(\partial_t,\partial_t)=B(X_1,X_1)=B(X_2,X_2)
=B(X_1,\partial_t)=0,\\
&B(\partial_t,X_1)=B(\partial_t,X_2)=B(X_2,\partial_t)=0,\notag\\
&B(X_1,X_2)=X_3,~~~ B(X_2,X_1)=-X_3,\notag\\
&A_{X_3}\partial_t=0,~~~A_{X_3}X_1=\frac{X_2}{f^2},~~~A_{X_3}X_2=-\frac{X_1}{f^2},\notag\\
&L^{\bot}_{\partial_t}X_3=L^{\bot}_{X_1}X_3=L^{\bot}_{X_2}X_3=0.\notag
\notag
\end{align}
Similarly to (2.13), we can compute the curvature tensor on $D$ as follows:
\begin{align}
&R^D(\partial_t,X_1)\partial_t=\frac{f''}{f}X_1,~~~ R^D(\partial_t,X_2)\partial_t=\frac{f''}{f}X_2,\\
&
R^D(\partial_t,X_1)X_1=-ff''\partial_t,~~~ R^D(\partial_t,X_1)X_2=R^D(\partial_t,X_2)X_1=0,~~~ \notag\\
&
R^D(\partial_t,X_2)X_2=-ff''\partial_t,~~~ R^D(X_1,X_2)\partial_t=0,\notag\\
&
R^D(X_1,X_2)X_1=[(f')^2-4]X_2,~~~ R^D(X_1,X_2)X_2=[-(f')^2+4]X_1.\notag
\end{align}
Let $\widetilde{X_1}=f^{-1}X_1$, $\widetilde{X_2}=f^{-1}X_2.$ We use the similar definition of (2.35) and get
\begin{align}
K^D(\partial_t\wedge \widetilde{X_1})=-\frac{f''}{f},~~~K^D(\partial_t\wedge \widetilde{X_2})=-\frac{f''}{f},~~~K^D(\widetilde{X_1}\wedge \widetilde{X_2})=\frac{4-(f')^2}{f^2}.
\end{align}
We define the Ricci tensor of $D$ by ${\rm Ric}^D(X,Y)=\sum_{k=1}^3g^D(R^D(X,E_k)Y,E_k)$ where $X,Y\in\Gamma(D)$ and $E_1,E_2,E_3$ are orthonormal basis of $(D,g^D)$. Then
\begin{align}
&{\rm Ric}^D(\partial_t,\partial_t)=\frac{2f''}{f}, {\rm Ric}^D(X_1,X_1)={\rm Ric}^D(X_2,X_2)=ff''+(f')^2-4,\\
&{\rm Ric}^D(\partial_t,X_1)={\rm Ric}^D(\partial_t,X_2)={\rm Ric}^D(X_1,\partial_t)={\rm Ric}^D(X_2,\partial_t)=0;
\notag\\
&{\rm Ric}^D(X_1,X_2)={\rm Ric}^D(X_2,X_1)=0.\notag
\end{align}
We called that $(D,g^D)$ is Einstein if ${\rm Ric}^D(X,Y)=c_0g^D(X,Y)$ for $X,Y\in\Gamma(D)$.\\

\begin{thm}
$(D,g^D)$ is Einstein with the Einstein constant $c_0$ if and only if\\
\indent ${\rm (1)}~~~c_0=0,~~~f(t)=2t+c_1~~~{\rm or}~~~f(t)=-2t+c_1,$\\
\indent ${\rm (2)}~~~c_0>0,~~~f(t)=-\frac{2}{c_2c_0}e^{\sqrt{\frac{c_0}{2}}t}+c_2e^{-\sqrt{\frac{c_0}{2}}t},$\\
\indent ${\rm (3)}~~~c_0<0,~~~f(t)=c_1{\rm cos}(\sqrt{\frac{-c_0}{2}}t)+c_2{\rm sin}(\sqrt{\frac{-c_0}{2}}t),~~~c_1^2+c^2_2=-\frac{8}{c_0},$\\
where $c_1,c_2$ are constant.
\end{thm}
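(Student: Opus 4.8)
The plan is to read the Einstein condition off directly from the Ricci components recorded in (5.23) together with the explicit warped metric (5.12). Since both ${\rm Ric}^D$ and $g^D$ are symmetric $(0,2)$-tensors on $D$, it suffices to test the identity ${\rm Ric}^D(X,Y)=c_0\,g^D(X,Y)$ on the spanning frame $\{\partial_t,X_1,X_2\}$. From (5.12) one has $g^D(\partial_t,\partial_t)=1$, $g^D(X_i,X_j)=f^2\delta_{ij}$, and all mixed pairings vanish; by (5.23) the corresponding off-diagonal Ricci components vanish as well, so the $(\partial_t,X_i)$ and $(X_1,X_2)$ equations hold automatically and only the two diagonal types survive.

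First I would extract those two surviving scalar equations. The $(\partial_t,\partial_t)$ pairing gives $2f''/f=c_0$, that is $f''=\frac{c_0}{2}f$, while the $(X_1,X_1)$ (equivalently $(X_2,X_2)$) pairing gives $ff''+(f')^2-4=c_0f^2$. Substituting the first relation into the second eliminates $f''$ and leaves the first-order constraint $(f')^2=\frac{c_0}{2}f^2+4$. I would then observe that this constraint is precisely a first integral of the linear equation: multiplying $f''=\frac{c_0}{2}f$ by $2f'$ yields $\frac{d}{dt}[(f')^2-\frac{c_0}{2}f^2]=0$, so $(f')^2-\frac{c_0}{2}f^2$ is conserved, and the Einstein condition is exactly the requirement that this conserved value equal $4$. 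Consequently the two equations are compatible rather than over-determined, and the whole problem reduces to solving the constant-coefficient ODE $f''=\frac{c_0}{2}f$ and then fixing the integration constants by the single scalar condition $(f')^2-\frac{c_0}{2}f^2=4$.

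The computation then splits into three sign cases for $c_0$. For $c_0=0$ the equation forces $f$ affine, $f(t)=at+b$, and the constraint $(f')^2=4$ gives $a=\pm2$, yielding case (1). For $c_0>0$ I would take the general solution as a combination of $e^{\pm\sqrt{c_0/2}\,t}$; imposing the constraint makes the $e^{\pm2\sqrt{c_0/2}\,t}$ terms cancel and leaves a single algebraic relation $AB=-2/c_0$ on the two coefficients, which after relabelling the second coefficient $c_2$ reproduces the form in case (2). For $c_0<0$ the solution is $c_1\cos(\sqrt{-c_0/2}\,t)+c_2\sin(\sqrt{-c_0/2}\,t)$, and the same constraint collapses, via $\sin^2+\cos^2=1$, to $(-c_0/2)(c_1^2+c_2^2)=4$, i.e. $c_1^2+c_2^2=-8/c_0$, which is case (3).

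The only substantive labor is solving the elementary linear ODE and performing the three trigonometric/exponential simplifications, so there is no genuine obstacle; the one point worth flagging is the first-integral remark, which guarantees that the apparently extra second-order equation imposes no hidden condition beyond the constraint. For the converse direction I would simply verify that each listed $f$ satisfies both scalar equations, which is immediate by differentiation, giving the stated equivalence.
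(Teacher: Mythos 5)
Your proposal is correct and follows essentially the same route as the paper: read the Einstein condition off the diagonal Ricci components (these are in (5.21), not (5.23)) to get the system $f''=\frac{c_0}{2}f$ and $ff''+(f')^2-4=c_0f^2$, solve the linear ODE in the three sign cases, and fix the integration constants by the resulting constraint $(f')^2=\frac{c_0}{2}f^2+4$. Your first-integral observation, explaining why the second-order equation imposes nothing beyond that single constraint, is a nice clarification the paper leaves implicit, but it does not alter the argument.
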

\begin{proof}
By (5.21), $(D,g^D)$ is Einstein with the Einstein constant $c_0$ if and only if
\begin{align}
&{f''}-\frac{c_0}{2}f=0,\\
&ff''+(f')^2-4=c_0f^2.
\end{align}
\indent If $c_0=0$, by (5.22), then $f=c_2x+c_1$. Using (5.23), then $c_2=2,{\rm or}-2$, so we get case (1).\\
\indent If $c_0>0$, by (5.22), then $f=c_1e^{\sqrt{\frac{c_0}{2}}t}+c_2e^{-\sqrt{\frac{c_0}{2}}t}.$ Using (5.23), then $(f')^2=4+\frac{c_0}{2}f^2$, so $c_1=\frac{-2}{c_2c_0}$ and we get case (2).\\
\indent If $c_0<0$, by (5.22), then $f=c_1{\rm cos}(\sqrt{\frac{-c_0}{2}}t)+c_2{\rm sin}(\sqrt{\frac{-c_0}{2}}t)$. Using $(f')^2=4+\frac{c_0}{2}f^2$, we get $c_1^2+c^2_2=-\frac{8}{c_0}$ and so case (3) holds.
\end{proof}
\vskip 1 true cm
We also get
\begin{align}
s^D:=&{\rm Ric}^D(\partial_t,\partial_t)+{\rm Ric}^D(\widetilde{X}_1,\widetilde{X}_1)+{\rm Ric}^D(\widetilde{X}_2,\widetilde{X}_2)\\
=&4\frac{f''}{f}+2\frac{(f')^2}{f^2}-\frac{8}{f^2}.\notag
\end{align}
\indent Let $U=\partial_t$, then
\begin{align}
\widetilde{\nabla}^D_XY=\nabla^D_XY+g(\partial_t,Y)X-g(X,Y)\partial_t,~~~\widetilde{B}(X,Y)=B(X,Y).
\end{align}
By (5.25), we get
\begin{align}
&\widetilde{\nabla}^D_{\partial_t}\partial_t=0,~~~ \widetilde{\nabla}^D_{\partial_t}X_1=\frac{f'}{f}X_1,~~~ \widetilde{\nabla}^D_{X_1}\partial_t=\frac{f'}{f}X_1+\partial_t,~~~
\widetilde{\nabla}^D_{\partial_t}X_2=\frac{f'}{f}X_2,\\
&~~~ \widetilde{\nabla}^D_{X_2}\partial_t=\frac{f'}{f}X_2+\partial_t,~~~
\widetilde{\nabla}^D_{X_1}X_1=\widetilde{\nabla}^D_{X_2}X_2=-ff'\partial_t-f^2\partial_t,\notag\\
&~~~ \widetilde{\nabla}^D_{X_1}X_2=0,~~~ \widetilde{\nabla}^D_{X_2}X_1=0.\notag
\end{align}
Then
\begin{align}
&\widetilde{R}^D(\partial_t,X_1)\partial_t=\frac{f''}{f}X_1,~~~ \widetilde{R}^D(\partial_t,X_2)\partial_t=\frac{f''}{f}X_2,\\
&
\widetilde{R}^D(\partial_t,X_1)X_1=-(ff''+ff')\partial_t,~~~ \widetilde{R}^D(\partial_t,X_1)X_2=\widetilde{R}^D(\partial_t,X_2)X_1=0,~~~ \notag\\
&
\widetilde{R}^D(\partial_t,X_2)X_2=-(ff''+ff')\partial_t,~~~\widetilde{R}^D(X_1,X_2)\partial_t=\frac{f'}{f}(X_1-X_2),\notag\\
&
\widetilde{R}^D(X_1,X_2)X_1=[(f')^2+ff'-4]X_2+(ff'+f^2)\partial_t,\notag\\
& \widetilde{R}^D(X_1,X_2)X_2=-[(f')^2+ff'-4]X_1-(ff'+f^2)\partial_t,\notag
\end{align}
and
\begin{align}
\widetilde{K}^D(\partial_t\wedge \widetilde{X_1})=-\frac{2f''+f'}{2f},~~~\widetilde{K}^D(\partial_t\wedge \widetilde{X_2})=-\frac{2f''+f'}{2f},~~~\widetilde{K}^D(\widetilde{X_1}\wedge \widetilde{X_2})
=\frac{4-ff'-(f')^2}{f^2},
\end{align}
and
\begin{align}
&\widetilde{{\rm Ric}}^D(\partial_t,\partial_t)=\frac{2f''}{f},~~~ \widetilde{{\rm Ric}}^D(X_1,X_1)={\rm Ric}^D(X_2,X_2)=ff''+2ff'+(f')^2-4,\\
&\widetilde{{\rm Ric}}^D(\partial_t,X_1)=\widetilde{{\rm Ric}}^D(\partial_t,X_2)=0,~~~ \widetilde{{\rm Ric}}^D(X_1,\partial_t)=\widetilde{{\rm Ric}}^D(X_2,\partial_t)=-\frac{f'}{f};
\notag\\
&\widetilde{{\rm Ric}}^D(X_1,X_2)=\widetilde{{\rm Ric}}^D(X_2,X_1)=0.\notag
\end{align}
Then we have

\begin{prop}
$(D,g^D,\widetilde{\nabla}^D)$ is mixed Ricci flat if and only if $f$ is a constant.
\end{prop}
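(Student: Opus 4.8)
The plan is to obtain the conclusion by reading off the mixed components of the Ricci tensor that are already tabulated in~(5.31). Since $(D,g^D)$ is the warped product distribution with one-dimensional base direction $\partial_t$ and fiber directions $X_1,X_2$, the statement that $(D,g^D,\widetilde{\nabla}^D)$ is \emph{mixed Ricci flat} means that $\widetilde{{\rm Ric}}^D$ vanishes on every pair consisting of one base and one fiber vector, that is, on the four pairs $(\partial_t,X_i)$ and $(X_i,\partial_t)$ with $i=1,2$. Because $\widetilde{\nabla}^D$ has nonzero torsion (cf.~(2.7)), its Ricci tensor is in general not symmetric, so I would be careful to examine both orderings of the arguments rather than only one.

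First I would quote the four mixed entries from~(5.31), namely
\begin{align*}
\widetilde{{\rm Ric}}^D(\partial_t,X_1)=\widetilde{{\rm Ric}}^D(\partial_t,X_2)=0,\qquad
\widetilde{{\rm Ric}}^D(X_1,\partial_t)=\widetilde{{\rm Ric}}^D(X_2,\partial_t)=-\frac{f'}{f}.
\end{align*}
The first two vanish identically, so the mixed Ricci flat condition collapses to the single scalar equation $-f'/f=0$.

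Next I would use the standing assumption that $f\in C^{\infty}(\mathbb{R})$ has no zero points, so that $f\neq0$ everywhere; then $-f'/f=0$ is equivalent to $f'\equiv0$, i.e.\ $f$ is constant. For the reverse implication, if $f$ is constant then $f'\equiv0$ and all four mixed entries above are zero, so $(D,g^D,\widetilde{\nabla}^D)$ is mixed Ricci flat. This establishes the claimed equivalence.

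The computation itself is immediate once~(5.31) is in hand, so there is no serious analytic obstacle here; the only point requiring care is conceptual. One must correctly identify which Ricci entries are the ``mixed'' ones for this warped product distribution and remember that the torsion of $\widetilde{\nabla}^D$ destroys the symmetry of $\widetilde{{\rm Ric}}^D$---it is the asymmetric entry $\widetilde{{\rm Ric}}^D(X_i,\partial_t)=-f'/f$, not the vanishing entry $\widetilde{{\rm Ric}}^D(\partial_t,X_i)$, that encodes the entire content of the proposition.
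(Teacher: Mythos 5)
Your proof is correct and takes essentially the same route as the paper, which states the proposition as an immediate consequence of the tabulated Ricci entries: the symmetric-looking mixed entries vanish identically, while the asymmetric entries $\widetilde{{\rm Ric}}^D(X_i,\partial_t)=-\frac{f'}{f}$ carry the whole content and vanish iff $f'\equiv 0$, since $f$ has no zeros. The only slip is the citation: those entries are equation (5.29) in the paper, not (5.31), which instead lists the $\widehat{\nabla}^D$ connection coefficients.
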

Similarly to (5.24), we have
\begin{align}
\widetilde{s}^D=4\frac{f''}{f}+4\frac{f'}{f}+2\frac{(f')^2}{f^2}-\frac{8}{f^2}.
\end{align}
So when $f$ is a constant, then $\widetilde{s}^D$ is a constant. By (5.29), $(D,g^D,\widetilde{\nabla}^D)$ is not Einstein.\\
\indent By (3.3) and (5.17), we have
\begin{align}
&\widehat{\nabla}^D_{\partial_t}\partial_t=\partial_t,~~~ \widehat{\nabla}^D_{\partial_t}X_1=\frac{f'}{f}X_1,~~~ \widehat{\nabla}^D_{X_1}\partial_t=\frac{f'}{f}X_1+\partial_t,~~~
\widehat{\nabla}^D_{\partial_t}X_2=\frac{f'}{f}X_2,\\
&~~~ \widehat{\nabla}^D_{X_2}\partial_t=\frac{f'}{f}X_2+\partial_t,~~~
\widehat{\nabla}^D_{X_1}X_1=\widehat{\nabla}^D_{X_2}X_2=-ff'\partial_t,\notag\\
&~~~ \widehat{\nabla}^D_{X_1}X_2=0,~~~ \widehat{\nabla}^D_{X_2}X_1=0.\notag
\end{align}
Then
\begin{align}
&\widehat{R}^D(\partial_t,X_1)\partial_t=\frac{f''-f'}{f}X_1,~~~ \widehat{R}^D(\partial_t,X_2)\partial_t=\frac{f''-f'}{f}X_2,\\
&
\widehat{R}^D(\partial_t,X_1)X_1=-(ff''+ff')\partial_t,~~~ \widehat{R}^D(\partial_t,X_1)X_2=\widehat{R}^D(\partial_t,X_2)X_1=0,~~~ \notag\\
&
\widehat{R}^D(\partial_t,X_2)X_2=-(ff''+ff')\partial_t,~~~\widehat{R}^D(X_1,X_2)\partial_t=\frac{f'}{f}(X_1-X_2),\notag\\
&
\widehat{R}^D(X_1,X_2)X_1=[(f')^2-4]X_2+ff'\partial_t,\notag\\
&\widehat{R}^D(X_1,X_2)X_2=-[(f')^2-4]X_1-ff'\partial_t,\notag
\end{align}
and
\begin{align}
&\widehat{{\rm Ric}}^D(\partial_t,\partial_t)=2\frac{f''-f'}{f},~~~ \widehat{{\rm Ric}}^D(X_1,X_1)={\rm Ric}^D(X_2,X_2)=ff''+ff'+(f')^2-4,\\
&\widehat{{\rm Ric}}^D(\partial_t,X_1)=\widehat{{\rm Ric}}^D(\partial_t,X_2)=0,~~~ \widehat{{\rm Ric}}^D(X_1,\partial_t)=\widehat{{\rm Ric}}^D(X_2,\partial_t)=-\frac{f'}{f};
\notag\\
&\widehat{{\rm Ric}}^D(X_1,X_2)=\widehat{{\rm Ric}}^D(X_2,X_1)=0.\notag
\end{align}
So $(D,g^D,\widehat{\nabla}^D)$ is mixed Ricci flat if and only if $f$ is a constant and $(D,g^D,\widehat{\nabla}^D)$ is not Einstein.
Similarly to (5.28) and (5.30), we have
\begin{align}
\widehat{K}^D(\partial_t\wedge \widetilde{X_1})=-\frac{f''}{f},~~~\widehat{K}^D(\partial_t\wedge \widetilde{X_2})=-\frac{f''}{f},~~~\widehat{K}^D(\widetilde{X_1}\wedge \widetilde{X_2})
=\frac{4-(f')^2}{f^2},
\end{align}
\begin{align}
\widehat{s}^D=4\frac{f''}{f}+2\frac{(f')^2}{f^2}-\frac{8}{f^2}.
\end{align}

\vskip 1 true cm
\noindent {\bf Example 3}\\
\indent The Heisenberg group $H_3$ is defined as $\mathbb{R}^3$ with the group operation
\begin{align}
(x,y,z)\cdot(\overline{x},\overline{y},\overline{z})=(x+\overline{x},y+\overline{y},z+\overline{z}+\frac{1}{2}(\overline{x}y-\overline{y}x)).
\end{align}
Let $(H_3,g_{H_3})$ be the Heisenberg group $H_3$ endowed with the Riemannian metric $g$ which is defined by
\begin{align}
g_{H_3}=dx^2+dy^2+(dz+\frac{1}{2}(ydx-xdy))^2.
\end{align}
The following vector fields form a orthonormal frame on $H_3$:
\begin{align}
e_1=\frac{\partial}{\partial x}-\frac{y}{2}\frac{\partial}{\partial z},~~~ e_2=\frac{\partial}{\partial y}+\frac{x}{2}\frac{\partial}{\partial z},~~~ e_3=\frac{\partial}{\partial z}.
\end{align}
These vector fields satisfy the commutation relations
\begin{align}
[e_1,e_2]=e_3,~~~ [e_1,e_3]=0,~~~ [e_2,e_3]=0.
\end{align}
The Levi-Civita connection $\nabla$ of $H_3$ is given by
\begin{align}
&\nabla_{e_j}e_j=0,~~~1\leq j\leq 3,~~~\nabla_{e_1}e_2=\frac{1}{2}e_3,~~~ \nabla_{e_2}e_1=-\frac{1}{2}e_3,\\
& \nabla_{e_1}e_3=\nabla_{e_3}e_1=-\frac{1}{2}e_2,
 ~~~\nabla_{e_2}e_3=\nabla_{e_3}e_2=\frac{1}{2}e_1.\notag
\end{align}
Let $D=span\{e_1,e_2\}$, by (5.40), then $\nabla^D_{e_i}e_j=0,$ $1\leq i,j\leq 2$. Let $U=e_1+e_2+e_3$, then
\begin{align}
&\widetilde{\nabla}^D_{e_1}e_1=-e_2,~~~\widetilde{\nabla}^D_{e_1}e_2=e_1,~~~ \widetilde{\nabla}^D_{e_2}e_1=e_2,~~~
\widetilde{\nabla}^D_{e_2}e_2=-e_1,\\
&
\widetilde{B}(e_1,e_1)=-e_3,~~~ \widetilde{B}(e_2,e_2)=-e_3,~~~ \widetilde{B}(e_1,e_2)=\frac{1}{2}e_3,~~~ \widetilde{B}(e_2,e_1)=-\frac{1}{2}e_3.\notag\\
&\widetilde{R}^D(e_1,e_2)e_1=\widetilde{R}^D(e_1,e_2)e_2=0,\notag
\end{align}
so $(D,g^D,\widetilde{\nabla}^D)$ is flat. Similarly, we have
\begin{align}
&\widehat{\nabla}^D_{e_1}e_1=e_1,~~~\widehat{\nabla}^D_{e_1}e_2=e_1,~~~ \widehat{\nabla}^D_{e_2}e_1=e_2,~~~
\widehat{\nabla}^D_{e_2}e_2=e_2,\\
&\widehat{R}^D(e_1,e_2)e_1=\widehat{R}^D(e_1,e_2)e_2=e_1-e_2.\notag
\end{align}

\vskip 1 true cm
\noindent {\bf Example 4}\\
\indent Let $M=\mathbb{R}\times H_3$ and $D^1=span\{e_1,e_2\}$ and $TH_3=D^1\oplus D^{1,\bot}$. Let $f(t)\in C^{\infty}(\mathbb{R})$ without zero points. Let $\pi_1:
\mathbb{R}\times H_3\rightarrow \mathbb{R};(t,x)\rightarrow t$ and $\pi_2:
\mathbb{R}\times H_3\rightarrow H_3;(t,x)\rightarrow x$. Let
\begin{align}
&g^M_f=\pi^*_1dt^2\oplus f^2\pi^*_2g^{D^1}\oplus \pi^*_2 g^{D^{1,\bot}};\\
&D=\pi^*_1(T\mathbb{R})\oplus \pi^*_2D^1;~~~g^D=\pi^*_1dt^2\oplus f^2\pi^*_2g^{D^1}.\notag
\end{align}
Let $\nabla^f$ be the Levi-Civita connection on $(M,g^M_f)$, then by the Koszul formula and (5.43), we get
\begin{align}
&\nabla^f_{\partial_t}\partial_t=0,~~~ \nabla^f_{\partial_t}e_1=\frac{f'}{f}e_1,~~~ \nabla^f_{e_1}\partial_t=\frac{f'}{f}e_1,~~~
\nabla^f_{\partial_t}e_2=\frac{f'}{f}e_2,\\
&~~~ \nabla^f_{e_2}\partial_t=\frac{f'}{f}e_2,~~~ \nabla^f_{\partial_t}e_3=\nabla^f_{e_3}\partial_t=0,~~~
\nabla^f_{e_1}e_1=\nabla^f_{e_2}e_2=-ff'\partial_t,\notag\\
&~~~ \nabla^f_{e_1}e_2=\frac{1}{2}e_3,~~~ \nabla^f_{e_2}e_1=-\frac{1}{2}e_3,
~~~ \nabla^f_{e_1}e_3=-\frac{e_2}{2f^2},~~~ \nabla^f_{e_3}e_1=-\frac{1}{2f^2}e_2,\notag\\
&~~~
\nabla^f_{e_2}e_3=\frac{e_1}{2f^2},~~~ \nabla^f_{e_3}e_2=\frac{1}{2f^2}e_1,~~~ \nabla^f_{e_3}e_3=0.\notag
\end{align}
So we obtain
\begin{align}
&R^f(\partial_t,e_1)\partial_t=\frac{f''}{f}e_1,~~~ R^f(\partial_t,e_2)\partial_t=\frac{f''}{f}e_2,~~~ R^f(\partial_t,e_3)\partial_t=0,\\
&~~~
R^f(\partial_t,e_1)e_1=-ff''\partial_t,~~~ R^f(\partial_t,e_1)e_2=-\frac{f'}{2f}e_3,~~~ R^f(\partial_t,e_1)e_3=\frac{1}{2}f^{-3}f'e_2,\notag\\
&~~~
 R^f(\partial_t,e_2)e_1=\frac{f'}{2f}e_3,~~~
R^f(\partial_t,e_2)e_2=-ff''\partial_t, R^f(\partial_t,e_2)e_3=-\frac{1}{2}f^{-3}f'e_1, \notag\\
&R^f(\partial_t,e_3)e_1=f^{-3}f'e_2,~~~ R^f(\partial_t,e_3)e_2=-f^{-3}f'e_1,~~~
R^f(\partial_t,e_3)e_3=0,\notag\\
&~~~ R^f(e_1,e_2)\partial_t=\frac{f'}{f}e_3,~~~ R^f(e_1,e_3)\partial_t=\frac{1}{2}f'f^{-3}e_2,~~~ R^f(e_2,e_3)\partial_t=-\frac{1}{2}f^{-3}f'e_1,~~~\notag\\
&
R^f(e_1,e_2)e_1=[(f')^2+\frac{3}{4f^2}]e_2,~~~ R^f(e_1,e_2)e_2=-[(f')^2+\frac{3}{4f^2}]e_1,\notag\\
&~~~ R^f(e_1,e_2)e_3=-f^{-1}f'\partial_t,~~~
R^f(e_1,e_3)e_1=-\frac{1}{4f^2}e_3,~~~ R^f(e_1,e_3)e_2=-\frac{1}{2}f^{-1}f'\partial_t,\notag\\
&~~~ R^f(e_1,e_3)e_3=\frac{1}{4f^4}e_1,~~~ R^f(e_2,e_3)e_1=\frac{1}{2}f^{-1}f'\partial_t,~~~\notag\\
&
R^f(e_2,e_3)e_2=-\frac{1}{4f^2}e_3,~~~ R^f(e_2,e_3)e_3=\frac{1}{4f^4}e_2.\notag
\end{align}
So $M$ is not flat. Then
\begin{align}
&{\rm Ric}^f(\partial_t,\partial_t)=\frac{2f''}{f}, {\rm Ric}^f(e_1,e_1)={\rm Ric}^f(e_2,e_2)=ff''+(f')^2+\frac{1}{2f^2},\\
 &{\rm Ric}^f(X_3,X_3)=-\frac{1}{2}f^{-4},
{\rm Ric}^f(\partial_t,e_1)={\rm Ric}^f(\partial_t,e_2)={\rm Ric}^f(\partial_t,e_3)=0,\notag\\
&{\rm Ric}^f(e_1,e_2)={\rm Ric}^f(e_1,e_3)={\rm Ric}^f(e_2,e_3)=0.\notag
\end{align}
Then $(M,g^M_f)$ is not Einstein and
\begin{align}
{s}^f=4\frac{f''}{f}+2\frac{(f')^2}{f^2}+\frac{1}{2}f^{-4}.
\end{align}

Let $D=span\{\partial_t,e_1,e_2\}$, by (5.44), we have
\begin{align}
&\nabla^D_{\partial_t}\partial_t=0,~~~ \nabla^D_{\partial_t}e_1=\frac{f'}{f}e_1,~~~ \nabla^D_{e_1}\partial_t=\frac{f'}{f}e_1,~~~
\nabla^D_{\partial_t}e_2=\frac{f'}{f}e_2,\\
&~~~ \nabla^D_{e_2}\partial_t=\frac{f'}{f}e_2,~~~
\nabla^D_{e_1}e_1=\nabla^D_{e_2}e_2=-ff'\partial_t,\notag\\
&~~~ \nabla^D_{e_1}e_2=0,~~~ \nabla^D_{e_2}e_1=0.\notag
\end{align}
We can compute the curvature tensor on $D$ as follows:
\begin{align}
&R^D(\partial_t,e_1)\partial_t=\frac{f''}{f}e_1,~~~ R^D(\partial_t,e_2)\partial_t=\frac{f''}{f}e_2,\\
&
R^D(\partial_t,e_1)e_1=-ff''\partial_t,~~~ R^D(\partial_t,e_1)e_2=R^D(\partial_t,e_2)e_1=0,~~~ \notag\\
&
R^D(\partial_t,e_2)e_2=-ff''\partial_t,~~~ R^D(e_1,e_2)\partial_t=0,\notag\\
&
R^D(e_1,e_2)e_1=(f')^2e_2,~~~ R^D(e_1,e_2)e_2=-(f')^2e_1,\notag
\end{align}
and we have
\begin{align}
K^D(\partial_t\wedge \widetilde{e_1})=-\frac{f''}{f},~~~K^D(\partial_t\wedge \widetilde{e_2})=-\frac{f''}{f},~~~K^D(\widetilde{e_1}\wedge \widetilde{e_2})=\frac{(f')^2}{f^2}.
\end{align}
So we have
\begin{align}
&{\rm Ric}^D(\partial_t,\partial_t)=\frac{2f''}{f}, {\rm Ric}^D(e_1,e_1)={\rm Ric}^D(e_2,e_2)=ff''+(f')^2,\\
&{\rm Ric}^D(\partial_t,e_1)={\rm Ric}^D(\partial_t,e_2)={\rm Ric}^D(e_1,\partial_t)={\rm Ric}^D(e_2,\partial_t)=0;
\notag\\
&{\rm Ric}^D(e_1,e_2)={\rm Ric}^D(e_2,e_1)=0.\notag
\end{align}

\begin{thm}
$(D,g^D)$ is Einstein with the Einstein constant $c_0$ if and only if\\
\indent ${\rm (1)}~~~c_0=0,~~~f(t)=c_1,$\\
\indent ${\rm (2)}~~~c_0>0,~~~f(t)=c_1e^{\sqrt{\frac{c_0}{2}}t}~~{\rm or}~~f(t)=c_2e^{-\sqrt{\frac{c_0}{2}}t},$\\
where $c_1,c_2$ are constant.
\end{thm}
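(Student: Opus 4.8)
The plan is to proceed exactly as in the proof of Theorem 5.1, since the Ricci components recorded in (5.50) differ from the sphere case only by the absence of the additive constant $-4$; geometrically the Heisenberg fibre contributes no curvature term of this kind. First I would write the Einstein condition ${\rm Ric}^D(X,Y)=c_0 g^D(X,Y)$ in the adapted frame $\{\partial_t,e_1,e_2\}$. By (5.50) the Ricci tensor is diagonal in this frame, and by the warped metric (5.42) so is $g^D$, with $g^D(\partial_t,\partial_t)=1$ and $g^D(e_1,e_1)=g^D(e_2,e_2)=f^2$; hence every off-diagonal equation holds automatically and the whole condition collapses to the two scalar equations
\begin{align*}
f''-\frac{c_0}{2}\,f=0,\qquad ff''+(f')^2=c_0 f^2.
\end{align*}

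The decisive reduction is to eliminate $f''$ from the second equation by means of the first. Substituting $f''=\tfrac{c_0}{2}f$ yields
\begin{align*}
(f')^2=\frac{c_0}{2}\,f^2,
\end{align*}
and this single first-order relation, together with the linear equation $f''=\tfrac{c_0}{2}f$, drives the entire classification. If $c_0=0$ it forces $f'\equiv 0$, so $f$ is a nonzero constant, giving case (1). If $c_0>0$ the general solution of the linear equation is $f=c_1 e^{\sqrt{c_0/2}\,t}+c_2 e^{-\sqrt{c_0/2}\,t}$; inserting this and cancelling the nonzero factor $\tfrac{c_0}{2}$ reduces $(f')^2=\tfrac{c_0}{2}f^2$ to $(c_1 e^{\sqrt{c_0/2}\,t}-c_2 e^{-\sqrt{c_0/2}\,t})^2=(c_1 e^{\sqrt{c_0/2}\,t}+c_2 e^{-\sqrt{c_0/2}\,t})^2$, whose cross terms force $c_1 c_2=0$. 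This leaves exactly the two pure exponentials of case (2).

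The step I expect to carry the real content is the exclusion of $c_0<0$, which is precisely what distinguishes this statement from the three-case Theorem 5.1. For $c_0<0$ the right-hand side of $(f')^2=\tfrac{c_0}{2}f^2$ is non-positive while the left-hand side is non-negative, so the identity forces $f\equiv 0$, contradicting the standing hypothesis that $f$ has no zeros; hence no Einstein distribution arises in this range. By contrast, in Theorem 5.1 the analogous relation read $(f')^2=\tfrac{c_0}{2}f^2+4$, and it was exactly the extra constant $+4$, descending from the sectional curvature of $\mathbb{S}^3$, that admitted the trigonometric solutions for negative $c_0$; its vanishing on the Heisenberg side is the geometric reason case (3) is absent here. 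I would finish by verifying the converse, that each listed $f$ satisfies both scalar equations, which is an immediate substitution.
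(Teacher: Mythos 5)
Your proposal is correct and follows essentially the same route as the paper: both reduce the Einstein condition (via the diagonal Ricci components $\mathrm{Ric}^D(\partial_t,\partial_t)=\frac{2f''}{f}$, $\mathrm{Ric}^D(e_i,e_i)=ff''+(f')^2$) to the pair $f''-\frac{c_0}{2}f=0$ and $ff''+(f')^2=c_0f^2$, hence to $(f')^2=\frac{c_0}{2}f^2$, and then classify by the sign of $c_0$. The only cosmetic difference is in excluding $c_0<0$: you use the sign argument $(f')^2\geq 0\geq \frac{c_0}{2}f^2$ forcing $f\equiv 0$, while the paper writes the general trigonometric solution and forces $c_1=c_2=0$; both reach the same contradiction with the hypothesis that $f$ has no zeros.
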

\begin{proof}
By (5.51), $(D,g^D)$ is Einstein with the Einstein constant $c_0$ if and only if
\begin{align}
&{f''}-\frac{c_0}{2}f=0,\\
&ff''+(f')^2=c_0f^2.
\end{align}
\indent If $c_0=0$, by (5.52), then $f=c_2x+c_1$. Using (5.53), then $c_2=0$, so we get case (1).\\
\indent If $c_0>0$, by (5.52), then $f=c_1e^{\sqrt{\frac{c_0}{2}}t}+c_2e^{-\sqrt{\frac{c_0}{2}}t}.$ Using (5.53), then $(f')^2=\frac{c_0}{2}f^2$, so $c_1=0$ or $c_2=0$, and we get case (2).\\
\indent If $c_0<0$, by (5.52), then $f=c_1{\rm cos}(\sqrt{\frac{-c_0}{2}}t)+c_2{\rm sin}(\sqrt{\frac{-c_0}{2}}t)$. Using $(f')^2=\frac{c_0}{2}f^2$, we get $c_1=c_2=0$. But $f\neq 0$, so in this case
there is no solution.
\end{proof}

\begin{thm}
$(D,g^D)$ is a distribution with constant scalar curvature $\lambda_0$ if and only if\\
\indent ${\rm (1)}~~~\lambda_0=0,~~~f(t)=(c_2t+c_1)^{\frac{2}{3}},$\\
\indent ${\rm (2)}~~~\lambda_0>0,~~~f(t)=(c_1e^{\sqrt{\frac{3\lambda_0}{8}}t}+c_2e^{-\sqrt{\frac{3\lambda_0}{8}}t})^{\frac{2}{3}},$\\
\indent ${\rm (3)}~~~\lambda_0<0,~~~f(t)=(c_1{\rm cos}(\sqrt{-\frac{3\lambda_0}{8}}t)+c_2{\rm sin}(\sqrt{-\frac{3\lambda_0}{8}}t))^{\frac{2}{3}},$\\
where $c_1,c_2$ are constant.
\end{thm}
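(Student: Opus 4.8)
The plan is to turn the constant-scalar-curvature condition into a single second-order ODE for $f$ and then linearize it.

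First I would compute the scalar curvature of $(D,g^D)$ from the Ricci components already recorded in (5.51). Writing $\widetilde{e}_i=f^{-1}e_i$ so that $\{\partial_t,\widetilde{e}_1,\widetilde{e}_2\}$ is a $g^D$-orthonormal frame, and summing the Ricci tensor over this frame exactly as in the computation that produced (5.24) for the sphere, I obtain
\[
s^D={\rm Ric}^D(\partial_t,\partial_t)+{\rm Ric}^D(\widetilde{e}_1,\widetilde{e}_1)+{\rm Ric}^D(\widetilde{e}_2,\widetilde{e}_2)=4\frac{f''}{f}+2\frac{(f')^2}{f^2}.
\]
The term $-8/f^2$ present in (5.24) does not appear here because the unwarped Heisenberg plane ${\rm span}\{e_1,e_2\}$ carries vanishing induced curvature (cf. (5.40)). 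Hence $(D,g^D)$ has constant scalar curvature $\lambda_0$ if and only if $f$ satisfies the nonlinear equation $4ff''+2(f')^2=\lambda_0f^2$ on its domain.

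Next, since $f$ has no zeros, I set $u=f^{3/2}$. Differentiating twice gives $2ff''+(f')^2=\tfrac{4}{3}f^{1/2}u''$, so that $4ff''+2(f')^2=\tfrac{8}{3}f^{1/2}u''$ and the equation above becomes $\tfrac{8}{3}f^{1/2}u''=\lambda_0f^2$. Dividing through by $f^{1/2}=u^{1/3}$ yields the constant-coefficient linear equation $u''=\tfrac{3\lambda_0}{8}u$. The exponent $3/2$ is exactly the value dictated by the ratio of the coefficients $4$ and $2$ standing in front of $ff''$ and $(f')^2$.

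I would then integrate $u''=\tfrac{3\lambda_0}{8}u$ in the three sign cases and back-substitute $f=u^{2/3}$, working on a maximal interval where $f>0$ (consistent with the standing assumption that $f$ has no zeros). For $\lambda_0=0$ one gets $u=c_2t+c_1$ and $f=(c_2t+c_1)^{2/3}$; for $\lambda_0>0$ one gets the exponential solution and $f=(c_1e^{\sqrt{3\lambda_0/8}\,t}+c_2e^{-\sqrt{3\lambda_0/8}\,t})^{2/3}$; for $\lambda_0<0$ one gets the sinusoidal solution and $f=(c_1\cos(\sqrt{-3\lambda_0/8}\,t)+c_2\sin(\sqrt{-3\lambda_0/8}\,t))^{2/3}$. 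These are precisely the three families in the statement, and since every step is reversible (using $f\neq0$), each such $f$ returns constant scalar curvature $\lambda_0$, establishing the converse.

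\textbf{The main obstacle} is spotting the substitution $u=f^{3/2}$; once it is in hand the nonlinear ODE collapses to a linear one and the case analysis is immediate. It is instructive to contrast this with the Einstein condition of the preceding theorem: being Einstein forces matching both the $\partial_t$-direction and the fibre-direction of the Ricci tensor, producing a coupled pair of equations whose compatibility constrains the constants (and, for $c_0<0$, rules the solution out); here only the single trace equation survives, so both integration constants remain free in all three cases.
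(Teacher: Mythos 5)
Your proposal is correct and is essentially the paper's own proof: the paper likewise computes $s^D=4\frac{f''}{f}+2\frac{(f')^2}{f^2}$ from (5.51), sets it equal to $\lambda_0$, and makes the substitution $f(t)=w(t)^{2/3}$ (identical to your $u=f^{3/2}$) to obtain the linear equation $w''-\frac{3}{8}\lambda_0 w=0$, whose elementary solution in the three sign cases gives the stated families. Your extra remarks --- verifying the substitution explicitly, noting reversibility, and restricting to an interval where $f>0$ --- only spell out details the paper leaves implicit.
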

\begin{proof}
By (5.51), we have
\begin{align}
{s}^D=4\frac{f''}{f}+2\frac{(f')^2}{f^2}=\lambda_0.
\end{align}
Let $f(t)=w(t)^{\frac{2}{3}}$ and by (5.54), we get $w''(t)-\frac{3}{8}\lambda_0w(t)=0$. By the elementary methods for ordinary differential equations we prove the above theorem.
\end{proof}

\indent Let $U=\partial_t$,
By (5.48), we get
\begin{align}
&\widetilde{\nabla}^D_{\partial_t}\partial_t=0,~~~ \widetilde{\nabla}^D_{\partial_t}e_1=\frac{f'}{f}e_1,~~~ \widetilde{\nabla}^D_{e_1}\partial_t=\frac{f'}{f}e_1+\partial_t,~~~
\widetilde{\nabla}^D_{\partial_t}e_2=\frac{f'}{f}e_2,\\
&~~~ \widetilde{\nabla}^D_{e_2}\partial_t=\frac{f'}{f}e_2+\partial_t,~~~
\widetilde{\nabla}^D_{e_1}e_1=\widetilde{\nabla}^D_{e_2}e_2=-ff'\partial_t-f^2\partial_t,\notag\\
&~~~ \widetilde{\nabla}^D_{e_1}e_2=0,~~~ \widetilde{\nabla}^D_{e_2}e_1=0.\notag
\end{align}
Then
\begin{align}
&\widetilde{R}^D(\partial_t,e_1)\partial_t=\frac{f''}{f}e_1,~~~ \widetilde{R}^D(\partial_t,e_2)\partial_t=\frac{f''}{f}e_2,\\
&
\widetilde{R}^D(\partial_t,e_1)e_1=-(ff''+ff')\partial_t,~~~ \widetilde{R}^D(\partial_t,e_1)e_2=\widetilde{R}^D(\partial_t,e_2)e_1=0,~~~ \notag\\
&
\widetilde{R}^D(\partial_t,e_2)e_2=-(ff''+ff')\partial_t,~~~\widetilde{R}^D(e_1,e_2)\partial_t=\frac{f'}{f}(e_1-e_2),\notag\\
&
\widetilde{R}^D(e_1,e_2)e_1=[(f')^2+ff']e_2+(ff'+f^2)\partial_t,\notag\\
& \widetilde{R}^D(e_1,e_2)e_2=-[(f')^2+ff']e_1-(ff'+f^2)\partial_t,\notag
\end{align}
and
\begin{align}
&\widetilde{{\rm Ric}}^D(\partial_t,\partial_t)=\frac{2f''}{f},~~~ \widetilde{{\rm Ric}}^D(e_1,e_1)={\rm Ric}^D(e_2,e_2)=ff''+2ff'+(f')^2,\\
&\widetilde{{\rm Ric}}^D(\partial_t,e_1)=\widetilde{{\rm Ric}}^D(\partial_t,e_2)=0,~~~ \widetilde{{\rm Ric}}^D(e_1,\partial_t)=\widetilde{{\rm Ric}}^D(e_2,\partial_t)=-\frac{f'}{f};
\notag\\
&\widetilde{{\rm Ric}}^D(e_1,e_2)=\widetilde{{\rm Ric}}^D(e_2,e_1)=0.\notag
\end{align}
Then $(D,g^D,\widetilde{\nabla}^D)$ is mixed Ricci flat if and only if $f$ is a constant.

\begin{thm}
$(D,g^D,\widetilde{\nabla}^D)$ is a distribution with constant scalar curvature $\lambda_0$ for $U=\partial_t$ if and only if\\
\indent ${\rm (1)}~~~\lambda_0=-\frac{2}{3},~~~f(t)=(c_1e^{-\frac{1}{2}t}+c_2te^{-\frac{1}{2}t})^{\frac{2}{3}},$\\
\indent ${\rm (2)}~~~\lambda_0>-\frac{2}{3},~~~f(t)=(c_1e^{\frac{-1+\sqrt{1+\frac{3}{2}\lambda_0}}{2}t}+c_2e^{\frac{-1-\sqrt{1+\frac{3}{2}\lambda_0}}{2}t})^{\frac{2}{3}}
,$\\
\indent ${\rm (3)}~~~\lambda_0<-\frac{2}{3},~~~f(t)=(c_1e^{-\frac{1}{2}t}{\rm cos}(\frac{\sqrt{-(1+\frac{3}{2}\lambda_0)}}{2}t)+c_2e^{-\frac{1}{2}t}{\rm sin}(\frac{\sqrt{-(1+\frac{3}{2}\lambda_0)}}{2}t))^{\frac{2}{3}},$\\
where $c_1,c_2$ are constant.
\end{thm}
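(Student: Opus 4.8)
The plan is to reduce the constant-scalar-curvature condition to a constant-coefficient linear ODE, following the substitution device already used in the proof of Theorem 5.4. First I would assemble the scalar curvature $\widetilde{s}^D$ of $(D,g^D,\widetilde{\nabla}^D)$ from the Ricci data recorded in (5.59). Since $g^D(e_i,e_j)=f^2\delta_{ij}$ for $i,j\in\{1,2\}$, the frame $\{\partial_t,\,f^{-1}e_1,\,f^{-1}e_2\}$ is orthonormal for $g^D$, so
\begin{align*}
\widetilde{s}^D=\widetilde{{\rm Ric}}^D(\partial_t,\partial_t)+\frac{1}{f^2}\widetilde{{\rm Ric}}^D(e_1,e_1)+\frac{1}{f^2}\widetilde{{\rm Ric}}^D(e_2,e_2)=\frac{4f''}{f}+\frac{4f'}{f}+\frac{2(f')^2}{f^2}.
\end{align*}
The requirement that $D$ have constant scalar curvature $\lambda_0$ is then the single nonlinear second order equation $4ff''+4ff'+2(f')^2=\lambda_0 f^2$.

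Next I would linearize this equation through the substitution $f=w^{2/3}$, exactly as in Theorem 5.4. A direct differentiation gives $\frac{f'}{f}=\frac{2}{3}\frac{w'}{w}$, $\frac{(f')^2}{f^2}=\frac{4}{9}\frac{(w')^2}{w^2}$ and $\frac{f''}{f}=\frac{2}{3}\frac{w''}{w}-\frac{2}{9}\frac{(w')^2}{w^2}$. The crucial observation is that the two $(w'/w)^2$ contributions cancel when these are inserted into $\widetilde{s}^D$, leaving
\begin{align*}
\widetilde{s}^D=\frac{8}{3}\frac{w''}{w}+\frac{8}{3}\frac{w'}{w}.
\end{align*}
Hence $\widetilde{s}^D=\lambda_0$ is equivalent to the constant-coefficient linear equation $w''+w'-\frac{3\lambda_0}{8}w=0$, whose characteristic polynomial $r^2+r-\frac{3\lambda_0}{8}=0$ has roots $r=\tfrac{1}{2}\left(-1\pm\sqrt{1+\tfrac{3}{2}\lambda_0}\right)$. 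Note that, relative to the Levi-Civita case of Theorem 5.4, the semi-symmetric term $\frac{4f'}{f}$ is precisely what produces the extra first-order term $w'$, shifting the critical value of the discriminant to $1+\frac{3}{2}\lambda_0=0$, i.e.\ $\lambda_0=-\frac{2}{3}$.

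Finally I would read off the three cases from the standard classification of solutions of a second order linear ODE. When $\lambda_0=-\frac{2}{3}$ the discriminant vanishes and $r=-\frac{1}{2}$ is a double root, giving $w=c_1e^{-t/2}+c_2te^{-t/2}$ and case (1); when $\lambda_0>-\frac{2}{3}$ there are two distinct real roots, giving case (2); and when $\lambda_0<-\frac{2}{3}$ the roots are the conjugate pair $-\frac{1}{2}\pm\frac{i}{2}\sqrt{-(1+\frac{3}{2}\lambda_0)}$, giving $w=e^{-t/2}\left(c_1\cos(\beta t)+c_2\sin(\beta t)\right)$ with $\beta=\frac{1}{2}\sqrt{-(1+\frac{3}{2}\lambda_0)}$ and case (3). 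Substituting $f=w^{2/3}$ back in each case reproduces the stated formulas. I expect the only delicate step to be verifying that the $(w')^2/w^2$ terms cancel after the substitution $f=w^{2/3}$; once that algebraic cancellation is confirmed, the remainder is the routine solution of a constant-coefficient ODE, and no auxiliary constraint on $c_1,c_2$ arises here (in contrast to the Einstein case of Theorem 5.1) because constancy of the scalar curvature is a single scalar condition rather than a system.
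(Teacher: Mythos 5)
Your proposal is correct and takes essentially the same route as the paper: the paper's proof likewise forms $\widetilde{s}^D=4\frac{f''}{f}+4\frac{f'}{f}+2\frac{(f')^2}{f^2}$ from the Ricci entries (recorded in (5.57), not (5.59)), substitutes $f=w^{2/3}$ to obtain $w''+w'-\frac{3}{8}\lambda_0 w=0$, and then appeals to ``elementary methods for ordinary differential equations.'' You have merely made explicit what the paper leaves implicit --- the cancellation of the $(w'/w)^2$ terms and the three-case discriminant analysis of the characteristic equation --- and both are verified correctly.
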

\begin{proof}
By (5.57), we have
\begin{align}
\widetilde{s}^D=4\frac{f''}{f}+4\frac{f'}{f}+2\frac{(f')^2}{f^2}=\lambda_0.
\end{align}
Let $f(t)=w(t)^{\frac{2}{3}}$ and by (5.58), we get $w''(t)+w'(t)-\frac{3}{8}\lambda_0w(t)=0$. By the elementary methods for ordinary differential equations we prove the above theorem.
\end{proof}
\indent By (5.48), we have
\begin{align}
&\widehat{\nabla}^D_{\partial_t}\partial_t=\partial_t,~~~ \widehat{\nabla}^D_{\partial_t}e_1=\frac{f'}{f}e_1,~~~ \widehat{\nabla}^D_{e_1}\partial_t=\frac{f'}{f}e_1+\partial_t,~~~
\widehat{\nabla}^D_{\partial_t}e_2=\frac{f'}{f}e_2,\\
&~~~ \widehat{\nabla}^D_{e_2}\partial_t=\frac{f'}{f}e_2+\partial_t,~~~
\widehat{\nabla}^D_{e_1}e_1=\widehat{\nabla}^D_{e_2}e_2=-ff'\partial_t,\notag\\
&~~~ \widehat{\nabla}^D_{e_1}e_2=0,~~~ \widehat{\nabla}^D_{e_2}e_1=0.\notag
\end{align}
Then
\begin{align}
&\widehat{R}^D(\partial_t,e_1)\partial_t=\frac{f''-f'}{f}e_1,~~~ \widehat{R}^D(\partial_t,e_2)\partial_t=\frac{f''-f'}{f}e_2,\\
&
\widehat{R}^D(\partial_t,e_1)e_1=-(ff''+ff')\partial_t,~~~ \widehat{R}^D(\partial_t,e_1)e_2=\widehat{R}^D(\partial_t,e_2)e_1=0,~~~ \notag\\
&
\widehat{R}^D(\partial_t,e_2)e_2=-(ff''+ff')\partial_t,~~~\widehat{R}^D(e_1,e_2)\partial_t=\frac{f'}{f}(e_1-e_2),\notag\\
&
\widehat{R}^D(e_1,e_2)e_1=(f')^2e_2+ff'\partial_t,\notag\\
&\widehat{R}^D(e_1,e_2)e_2=-(f')^2e_1-ff'\partial_t,\notag
\end{align}
and
\begin{align}
&\widehat{{\rm Ric}}^D(\partial_t,\partial_t)=2\frac{f''-f'}{f},~~~ \widehat{{\rm Ric}}^D(e_1,e_1)={\rm Ric}^D(e_2,e_2)=ff''+ff'+(f')^2,\\
&\widehat{{\rm Ric}}^D(\partial_t,e_1)=\widehat{{\rm Ric}}^D(\partial_t,e_2)=0,~~~ \widehat{{\rm Ric}}^D(e_1,\partial_t)=\widehat{{\rm Ric}}^D(e_2,\partial_t)=-\frac{f'}{f};
\notag\\
&\widehat{{\rm Ric}}^D(e_1,e_2)=\widehat{{\rm Ric}}^D(e_2,e_1)=0.\notag
\end{align}
So $(D,g^D,\widehat{\nabla}^D)$ is mixed Ricci flat if and only if $f$ is a constant. By (5.61), we get
\begin{align}
\widehat{s}^D=4\frac{f''}{f}+2\frac{(f')^2}{f^2}=s^D.
\end{align}
By Theorem 5,4, we have
\begin{thm}
$(D,g^D,\widehat{\nabla}^D)$ is a distribution with constant scalar curvature $\lambda_0$ for $U=\partial_t$ if and only if\\
\indent ${\rm (1)}~~~\lambda_0=0,~~~f(t)=(c_2t+c_1)^{\frac{2}{3}},$\\
\indent ${\rm (2)}~~~\lambda_0>0,~~~f(t)=(c_1e^{\sqrt{\frac{3\lambda_0}{8}}t}+c_2e^{-\sqrt{\frac{3\lambda_0}{8}}t})^{\frac{2}{3}},$\\
\indent ${\rm (3)}~~~\lambda_0<0,~~~f(t)=(c_1{\rm cos}(\sqrt{-\frac{3\lambda_0}{8}}t)+c_2{\rm sin}(\sqrt{-\frac{3\lambda_0}{8}}t))^{\frac{2}{3}},$\\
where $c_1,c_2$ are constant.
\end{thm}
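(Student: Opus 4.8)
The plan is to reduce the statement to the already-established classification for the Levi-Civita connection in Theorem~5.4. The pivotal observation is the identity $\widehat{s}^D=s^D$, so I would first verify this carefully. Using the orthonormal frame $\{\partial_t,f^{-1}e_1,f^{-1}e_2\}$ of $(D,g^D)$ together with the Ricci components recorded in (5.63), the scalar curvature is
\[
\widehat{s}^D=\widehat{{\rm Ric}}^D(\partial_t,\partial_t)+f^{-2}\widehat{{\rm Ric}}^D(e_1,e_1)+f^{-2}\widehat{{\rm Ric}}^D(e_2,e_2).
\]
Substituting $\widehat{{\rm Ric}}^D(\partial_t,\partial_t)=2(f''-f')/f$ and $\widehat{{\rm Ric}}^D(e_i,e_i)=ff''+ff'+(f')^2$, the two $+f'/f$ terms coming from the spatial directions cancel the $-2f'/f$ term coming from the $\partial_t$ direction, leaving exactly $\widehat{s}^D=4f''/f+2(f')^2/f^2=s^D$. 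This cancellation is the only substantive point of the proof: the non-metric correction to $\widehat{\nabla}^D$ contributes only to the mixed entries $\widehat{{\rm Ric}}^D(e_i,\partial_t)=-f'/f$, which never enter the diagonal trace defining the scalar curvature.

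Once $\widehat{s}^D=s^D$ is in hand, requiring $\widehat{s}^D=\lambda_0$ is literally equation (5.54), the same ordinary differential equation already analyzed for $(D,g^D)$. I would therefore simply invoke Theorem~5.4, whose three cases give precisely the asserted forms of $f$. For completeness I would recall the ODE step underlying that theorem: the nonlinear equation $4f''/f+2(f')^2/f^2=\lambda_0$ is linearized by the substitution $f=w^{2/3}$, under which a short computation gives $4f''/f+2(f')^2/f^2=\tfrac{8}{3}w''/w$, so the equation becomes $w''-\tfrac{3}{8}\lambda_0 w=0$. Solving this constant-coefficient linear equation in the cases $\lambda_0=0$, $\lambda_0>0$, $\lambda_0<0$ yields $w$ as a linear, exponential, or trigonometric function respectively, and raising to the power $2/3$ produces the three stated expressions.

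I do not expect a genuine obstacle here. The entire content is the identity $\widehat{s}^D=s^D$, after which the classification coincides verbatim with Theorem~5.4 and the remaining work is purely the bookkeeping of solving a second-order linear ODE. The one place to stay attentive is ensuring that the antisymmetry of $\widehat{{\rm Ric}}^D$, visible in its nonvanishing mixed components, does not affect the scalar curvature, which is defined as a diagonal trace; since it does not, the reduction to the metric case is clean and the result follows immediately.
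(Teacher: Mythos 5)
Your proposal is correct and follows essentially the paper's own route: the paper records the identity $\widehat{s}^D=s^D$ as equation (5.62) (your citation of ``(5.63)'' should be (5.61), where the Ricci components you quote actually appear) and then simply invokes Theorem 5.4, whose proof rests on exactly the substitution $f=w^{2/3}$ reducing the equation to $w''-\tfrac{3}{8}\lambda_0 w=0$ that you describe. One terminological quibble: the nonvanishing mixed components $\widehat{{\rm Ric}}^D(e_i,\partial_t)=-f'/f$ make $\widehat{{\rm Ric}}^D$ non-symmetric rather than antisymmetric, but as you correctly observe they never enter the diagonal trace defining $\widehat{s}^D$.
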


\vskip 1 true cm

\section{Acknowledgements}

The author was supported in part by  NSFC No.11771070.

\vskip 1 true cm


\bigskip
\bigskip

\noindent {\footnotesize {\it Y. Wang} \\
{School of Mathematics and Statistics, Northeast Normal University, Changchun 130024, China}\\
{Email: wangy581@nenu.edu.cn}

\end{document}